\definecolor{darkmagenta}{rgb}{0.5, 0, 0.5}
\definecolor{darkblue}{rgb}{0.1, 0.1, 0.7}
\definecolor{darkgreen}{rgb}{0.1, 0.35, 0.1}
 \newtheorem{thee}{Theorem}
 \newtheorem{coor}[thee]{Corollary}
 \newtheorem{leem}[thee]{Lemma}
 \newtheorem{prro}[thee]{Proposition}
 \newtheorem{exxe}[thee]{Example}
 \newtheorem{reem}[thee]{Remark}
 \newcommand{\balf}
 {\renewcommand{\theenumi}{(\alph{enumi})}
 \renewcommand{\labelenumi}{\theenumi}
                      \begin{enumerate}}
\newcommand{\ealf}   {\end{enumerate}
                      \renewcommand{\theenumi}{\arabic{enumi}}
                      \renewcommand{\labelenumi}{\theenumi.}}
\newcommand{\bara}   {\renewcommand{\theenumi}{(\arabic{enumi})}
                      \renewcommand{\labelenumi}{\theenumi}
                      \begin{enumerate} }
\newcommand{\eara}   {\end{enumerate}
                      \renewcommand{\theenumi}{\arabic{enumi}}
                      \renewcommand{\labelenumi}{\theenumi.}}
 \newcommand{\brom}   {\renewcommand{\theenumi}{(\roman{enumi})}
                      \renewcommand{\labelenumi}{\theenumi}
                      \begin{enumerate} }
\newcommand{\erom}   {\end{enumerate}
                      \renewcommand{\theenumi}{\arabic{enumi}}
                      \renewcommand{\labelenumi}{\theenumi.}}
	 \newcommand{\rc}{\color{red}}
	  \newcommand{\mc}{\color{darkmagenta}}
	 \newcommand{\ec}{\color{black}}
	  \newcommand{\Na}{\mbox{\rm Na}}
	  \newcommand{\Kr}{\mbox{\rm Kr}}
	   \newcommand{\Spec}{\mbox{\rm Spec}}
	   \newcommand{\Max}{\mbox{\rm Max}}
	   	  \newcommand{\Dbar}{\overline{D}}
     \newcommand{\f}{\boldsymbol{f}}   %%%%%
       \newcommand{\F}{\boldsymbol{F}}
      \newcommand{\Fbar}{{\overline{\boldsymbol{F}}}} %%%%%
	     \newcommand{\starf} {{\star_{_{\! f}}}} %%%%%
	      \newcommand{\start} {\widetilde{\star}} %%%%%
	           \newcommand{\QMax}{\mbox{\rm QMax}}   %%%%%
  \newcommand{\co} {\boldsymbol{c}}
\newcommand{\eab}{\texttt{eab} }
\newcommand{\ab}{\texttt{ab} }
  \DeclareMathOperator{\calbV} {\boldsymbol{\mathcal V}}%
  \DeclareMathOperator{\calbW}     {\boldsymbol{\mathcal W}}%
\begin{document}

 \title[]{On some classes of integral domains \\ defined by Krull's $\boldsymbol{a.b.}$ operations}

 \author{Marco Fontana and Giampaolo Picozza}

 %\dedicatory{\sl \rc work in progress}

\thanks{\it Acknowledgments. \rm
During the preparation of this paper, the  authors
were  partially supported by  a research grant PRIN-MiUR}

\address{ Dipartimento di Matematica, Universit\`a degli Studi
``Roma Tre'', 00146 Rome, Italy.}
\email{fontana@mat.uniroma3.it }

\email{ picozza@mat.uniroma3.it }

\date{May 2, 2011}

 \subjclass[2000]{13A15, 13G05, 13F30, 13E99}
 \keywords{Multiplicative ideal theory;  star operation; $b$--, $v$--, $t$--, $w$--operation;  valuation domain;   Pr\"ufer ($v$--multiplication) domain; Mori domain}

 \maketitle

 \begin{abstract}
 Let $D$ be an integral domain with quotient field $K$. The $b$-operation  that associates to each nonzero  $D$-submodule $E$ of $K$, $E^b :=  \bigcap\{EV \mid V \mbox{ valuation overring of } D\}$, is a semistar operation that plays  an important role in many questions of ring theory (e.g., if $I$ is a nonzero ideal in $D$, $I^b$ coincides with its integral closure).
In a first part of the paper, we study the integral domains that are $b$-Noetherian (i.e., such that, for each nonzero ideal $I$  of $D$, $I^b = J^b$ for some   a finitely generated ideal $J$ of $D$).  For instance, we prove that a $b$-Noetherian domain has Noetherian spectrum and, if it is integrally closed, is a Mori domain, but integrally closed Mori domains with Noetherian spectra are not necessarily $b$-Noetherian.  We also characterize several distinguished classes of $b$-Noetherian domains.
In a second part of the paper, we study more generally the e.a.b. semistar operation of finite type $\star_a$ canonically  associated  to a given semistar operation $\star$ (for instance, the $b$-operation is the
 e.a.b. semistar operation of finite type  canonically  associated to the identity operation). These operations, introduced and studied by Krull, Jaffard, Gilmer and Halter-Koch,  play a very important role in the recent generalizations of the Kronecker function ring. In particular, in the present paper, we classify several classes of integral domains having  some of the fundamental operations $d$, $t$, $w$ and $v$ equal to some of the canonically associated e.a.b. operations $b$, $t_a$, $w_a$ and $v_a$.

  \end{abstract}

%%%%%%%%%%%%%%%%%%%%%%%%%
\section{Introduction and Background Results}
%%%%%%%%%%%%%%%%%%%%%%%%%
%%%%%%%%%%%%%%%%%%%%%%%%%
%%%%%%%%%%%%%%%%%%%%%%%%%

Let $D$ be an integral domain with quotient field $K$. We denote by $\Fbar(D)$ the set of all nonzero $D$-submodules of $K$, by $\F(D)$ the set of nonzero fractional ideals of $D$ and by $\f(D)$ the set of nonzero finitely generated fractional ideals of $D$. Recall that a {\it star operation on $D$} is a map $\ast : \F(D) \to \F(D)$, $I \mapsto I^\ast$, such that  for
all $\,z \in K\,$, $\,z \not = 0\,$ and for all $\,I,J \in
\F(D)\,$, the following properties hold: $\mathbf{ \bf ({\boldsymbol \ast}_1)}$ \;$D^\ast = D$ and $(zI)^\ast =
zI^\star \,; $ \,
  $\mathbf{ \bf ({\boldsymbol \ast}_2)} \;   I \subseteq J
\;\Rightarrow\; I^\ast
\subseteq
J^\ast \,;$ \,
$\mathbf{ \bf ({\boldsymbol \ast}_3)}\;    I \subseteq
I^\ast
 \textrm {  and  } I^{\ast \ast} := (I^\ast)^\ast = I^\ast$ \cite[Section 32]{G}.

  In \cite{G}, it is shown that if $D$ is an integrally closed domain, the completion of ideals, that is the map which associates to a nonzero fractional ideal $I$ the fractional ideal $I^b  := \bigcap IV$, where $V$ varies over all valuations overrings of $D$, defines a star operation. Note that if $D$ is not integrally closed, the map $b$ still satisfies most of the properties of a star operations; the only problem is that $D^b$ (which coincides with the integral closure of $D$, by Krull Theorem \cite [Theorem 19.8]{G}) is a proper overring of $D$ and is not necessarily a fractional ideal of $D$. This observation leads in a natural way to the notion of semistar operation \cite{OM2}.

  A {\it semistar operation on $D$} is a map $\star : \Fbar(D) \to \Fbar(D)$ such that  for
all $\,z \in K\,$, $\,z \not = 0\,$ and for all $\,E,F \in
\boldsymbol{\overline{F}}(D)$, the following properties hold: $\mathbf{ \bf ({\boldsymbol \star}_1)} \;(zE)^\star =
zE^\star \,; $ \,
  $\mathbf{ \bf ({\boldsymbol \star}_2)} \;   E \subseteq F
\;\Rightarrow\; E^\star
\subseteq
F^\star \,;$ \,
$\mathbf{ \bf ({\boldsymbol \star}_3)}\;    E \subseteq
E^\star
 \textrm {  and  } E^{\star \star} := (E^\star)^\star = E^\star\,. $

 The $b$-operation is a semistar operation even if $D$ is not integrally closed.

 A {\it (semi)star operation on $D$} is a semistar operation $\star$ such that $D^\star = D$, i.e.,   a  (semi)star operation is a semistar operation which restricted to $\F(D)$ is a star operation.
 Conversely, if $\ast$ is a star operation on an integral domain $D$ (hence, defined only on $\boldsymbol{F}(D)$),  we can extend it trivially to a semistar   (in fact, (semi)star) operation on $D$, denoted $\ast_e$, by defining $E^{\ast_e}$ to be the quotient field of $D$ whenever
$E \in \boldsymbol{\overline{F}}(D) \setminus \boldsymbol{F}(D)$.  Therefore, all ``classical'' star operation examples can be considered semistar examples as well.
Note that, in general, a star operation $\ast$ can be extended in several different ways to a semistar operation and $\ast_e$ is just one possible (trivial) way to do so.

As in the classical star-operation setting, we associate to a
semistar  ope\-ra\-tion $\,\star\,$ on $D$ a new semistar
operation
{$\,\star_{_{\! f}}\,$} of $D$ as follows.   If $\,E \in
\boldsymbol{\overline{F}}(D)$, we set:
$$
E^{\star_{_{\! f}}} := \bigcup \{F^\star \;|\;\, F \subseteq E,\, F \in
\boldsymbol{f}(D)
\}\,.
$$
We call {\it$\, {\star_{_{\! f}} }\,$  the
semistar
operation of finite type on $D$  associated to $\,\star\,$}.
If $\,\star =
\star_{_{\! f}}\,$,\ we say that $\,\star\,$ is {\it a
semistar ope\-ra\-tion of finite type on $D$.  \rm  Given two semistar operations $\star'$ and $\star''$ of $D$, we say that $\star' \leq \star''$ if $E^{\star'} \subseteq E^{\star''}$ for all $E \in
\boldsymbol{\overline{F}}(D)$. Note that ${\star_{_{\! f}} } \leq \star$
%  (i.e.,  $E^{\star_{_{\! f}} } \subseteq E^\star$, for each  $E \in
%\boldsymbol{\overline{F}}(D))$
 and
$\,(\star_{_{\! f}})_{_{\! f}} = \star_{_{\! f}}\,$, \ so $\,\star_{_{\! f}}\,$ is a semistar operation
of finite type of $\,D\,.$

 If $\star$ coincides with the  semistar \emph{$v$--operation of $D$}, defined by $E^{v} : = (D:(D:E))$, for each
   $E \in
\boldsymbol{\overline{F}}(D)$, then  $v_f$ is denoted by $t$. Note that $v$ [respectively, $t$] restricted to $\boldsymbol{{F}}(D)$ coincides with the classical {\sl star} $v$--operation [respectively, $t$--operation]  of $D$. Furthermore, the $v$-semistar operation is the trivial extension of the $v$-star operation. The {\it identity semistar operation on $D$}, i.e., the operation denoted by $d$ and defined by $E^d := E$ for all $E \in \Fbar(D)$, is the smallest semistar operation on $D$. Clearly, $d$, $t$ and $v$ are examples of (semi)star operations on $D$.

 Examples of semistar operations (not necessarily star operations) can be given as follows: Let ${\boldsymbol{\mathcal{T}}} := \{T_\lambda \mid \lambda \in \Lambda \}$
be a family of overrings of $D$, then the operation  $\wedge_{{\boldsymbol{\mathcal{T}}}}$
defined by $E^{\wedge_{{\boldsymbol{\mathcal{T}}}}} :=
 \bigcap\{ ET_{\lambda} \mid \lambda \in \Lambda\}$
 for all $E \in \boldsymbol{\overline{F}}(D)$ is a semistar operation of $D$ and it is a (semi)star operation if and only if $\bigcap\{ T_{\lambda} \mid \lambda \in \Lambda\}= D$. In particular,  if ${\boldsymbol{\mathcal{T}}} := \{T\}$ is a family consisting of a unique proper overring $T$ of $D$, then we have $E^{\wedge_{{\boldsymbol{\mathcal{T}}}}} :=E^{\wedge_{\{T\}}} =
 ET$ is a semistar, but  not a (semi)star, operation of $D$.    If $\iota:D \hookrightarrow T$
 is the canonical embedding of $D$ in one of its overrings and if $\star$ is a semistar operation on $T$,
 we denote by $\star^{\iota}$ the semistar operation defined on $D$ by $E^{\star^\iota} := (ET)^\star$, for all $E\in \Fbar(D)$.
 More generally, it is easy to see that, if $\star_\lambda$ is a semistar operation of $T_\lambda$  and $\iota_\lambda: D \hookrightarrow T_\lambda$
 is the canonical embedding for $\lambda \in \Lambda$, then    $ E^{\wedge{\{\star_\lambda\mid \lambda \in \Lambda\}}}:= E^{\wedge{\{(\star_\lambda)^{\iota_\lambda} \mid \lambda \in \Lambda\}}} :=  \bigcap \{(ET_\lambda)^{\star_\lambda} \mid \lambda \in \Lambda\}$, for all $E\in \Fbar(D)$, defines a semistar operation of $D$.

We say that a nonzero ideal $I$ of $D$ is a
\emph{quasi-$\star$-ideal} if $I^\star \cap D = I$, a
\emph{quasi-$\star$-prime} if it is a prime quasi-$\star$-ideal,
and a \emph{quasi-$\star$-maximal} if it is maximal in the set of
all  proper  quasi-$\star$-ideals. A quasi-$\star$-maximal ideal is  a
prime ideal. It is possible  to prove that each  proper  quasi-$\star_{_{\!
f}}$-ideal is contained in a quasi-$\star_{_{\! f}}$-maximal
ideal.  More details can be found in \cite[page 4781]{FL:2003}. We
will denote by $\QMax^{\star}(D)$ the set of the
quasi-$\star$-maximal ideals  of $D$. By the previous considerations we have that
$\QMax^{\star}(D)$ is not empty, for all semistar operations $\star$ of finite type.  When $\star$ is a (semi)star operation, the condition
$I^\star \cap D = I$ becomes $I^\star =I$ and we simply say that $I$ is a {\it $\star$-ideal} and we will denote by $\Max^{\star}(D)$ the set of the
$\star$-maximal ideals  of $D$ (i.e., the maximal elements in the set of all proper $\star$-ideals).

A semistar operation $\star$ is called {\it stable} if $(E \cap F)^\star = E^\star \cap F^\star$, for all $E, F \in \Fbar(D)$.  By using the localizations at the quasi-$\star_{_{\! f}}$-maximal
ideals, we can associate to $\star$ a semistar operation stable and of finite type, as follows.
For each $E \in \boldsymbol{\overline{F}}(D)$,  we set
  $$
  E^{\start} := \bigcap \left \{ED_Q \mid Q \in  \QMax^{\star_{_{\! f}}}(D) \right\}\,.$$
The previous definition gives rise to  a semistar operation $\start$ on $D$ which is stable  and of finite type, called {\it the semistar operation stable of finite type associated to $\star$}  \cite[Corollary 3.9]{FH}.

Recall that, if $K$ is the quotient field of $D$ and $X$ is an indeterminate over $K$,   the integral domain $\Na(D, \star):= \{f/g \in K(X) \mid  f, g \in D[X], 0 \neq g \mbox{ and } \co(g)^\star =D^\star\}$, overring of $D[X]$,  is called  {\it  the Nagata ring of $D$ associated to the semistar operation $\star$}. It is known that $E^{\start} = E \Na(D, \star) \cap K$ for all $E \in \boldsymbol{\overline{F}}(D)$ \cite[Proposition 3.4(3)]{FL:2003}. It is easy to see that $\start\leq \starf \leq \star$.  We denote by $w$  the semistar operation stable of finite type associated to $v$, i.e., $w := \widetilde{v}$. Since $D^w =D$ { \cite[Proposition 4]{Gr}}, $w$ is also an example of (semi)star operation.

Let
     $\star$ be a semistar operation on $D$. If $F$ is in $\boldsymbol{f}(D)$, we say that
     $F$ is
            \it  $\star$--\texttt{eab} \rm [respectively,  \it
             $\star$--\texttt{ab}\rm]
  if
$(FG)^{\star}
             \subseteq (FH)^{\star}$ implies that $G^{\star}\subseteq H^{\star} $, with $G,\ H \in
\boldsymbol{f}(D)$, [respectively,  with $G,\ H \in
\overline {\boldsymbol{F}}(D)$].

      The operation $\star$ is said to be   \it \texttt{eab} \rm [respectively, \it \texttt{ab}\rm\ \!]  if each $F\in \boldsymbol{f}(D)$ is $\star$--\texttt{eab}  [respectively, $\star$--\texttt{ab}]. \  An \texttt{ab} operation is obviously an \texttt{eab} operation.

  Note  that if $\star$ is an \texttt{eab} semistar operation then $\, {\star_{_{\! f}} }\,$ is also an \texttt{eab} semistar operation, since they agree on all finitely generated ideals.
Note also that, if  $\star$ is a semistar operation of finite type, then
$ \star$  is an \texttt{eab} semistar operation if and only if  $\star$ is an \texttt{ab}  semistar operation; therefore, in the finite type setting, we use the terminology of \texttt{(e)ab}  semistar operation.
In general, we have that
 the notions of   $\star$--\texttt{eab} ideal and   $\star_{_{\! f}}$--\texttt{(e)ab} ideal coincide,    therefore, $\star$ is an \texttt{eab} semistar operation if and only if $\star_f$ is an \texttt{(e)ab} semistar operation  \cite[Proposition 4]{FL:2009}.

Using the fact that, given $F \in \f(D)$,
$F$ is $\star$--\texttt{eab}   if and only if  $\left(
               (FH)^\star : F^\star \right) = H^\star$, for each $H \in
               \boldsymbol{f}(D)$ \cite[Lemma 8]{FL:2009}, we can associate to any semistar ope\-ration $\,\star\,$ on
$\,D\,$  an \texttt{(e)ab} semistar operation of finite type
$\, \star_a\, $  on $\,D\,$, called {\it the \texttt{(e)ab}
semistar
operation associated to $\,\star\,$}, \ defined as follows
for each $ F \in \boldsymbol{f}(D)$ and
for each  $E \in {\overline{\boldsymbol{F}}}(D)$:
$$
\begin {array} {rl}
F^{\star_a} :=& \hskip -5pt  \bigcup\{((FH)^\star:H^\star) \; \ | \; \, \; H \in
\boldsymbol{f}(D)\}\,, \\
E^{\star_a} :=& \hskip -5pt  \bigcup\{F^{\star_a} \; | \; \, F \subseteq E\,,\; F \in
\boldsymbol{f}(D)\}\,,
\end{array}
$$
\cite[Definition 4.4 and Proposition 4.5]{FL1}. \rm  The previous
construction, in the ideal systems setting, is essentially due to {P. Jaffard}  \cite{J:1960}
and {F. Halter-Koch} \cite{HK:1997}, \cite{HK:1998}. The overring $D^{\star_a} = \bigcup\{((H^\star:H^\star) \mid H \in
\boldsymbol{f}(D)\}$ of $D$ is called {\it the $\star$-integral closure of $D$}.
Obviously
 $(\star_{_{\!f}})_{a}= \star_{a}$. Moreover, when $\star =
\star_{_{\!f}}$, then $\star$ is \texttt{(e)ab} if and only if $\star =
\star_{a}\,$ \cite[Proposition 4.5(5)]{FL1}.

For a domain
$\,D\,$ and a semistar operation $\,\star\,$ of $\,D\,$, \ we say
that
a
valuation overring $\,V\,$ of $\,D\,$ is \it a
$\star$--valuation overring of $\,D\,$ \rm provided $\,F^\star
\subseteq FV\,$ (or, equivalently, $\,F^\star V
=FV$)  \ for each $\,F \in \boldsymbol{f}(D)\,$.
Set $\calbV(\star) := \{ V \mid V  \mbox{ is a $\star$--valuation  overring of}$ $D \}$.
 The semistar operation on $D$ defined as follows: for each $E\in \overline{\boldsymbol{F}}(D)$,
$$
E^{{b(\star)}}:= \bigcap\{EV \mid V \in \calbV(\star)\}   = E^{{\wedge_{\calbV(\star)}}} \,,
$$
is an \texttt{ab} semistar operation on $D$ \cite[page 2098]{FL:2009}; clearly, ${b(\star)}= {b(\starf)}$ and ${b(\star)}$ is a (semi)star operation on $D$ if and only if $D$ is $\star$-integrally closed, i.e. $D =D^{\star_a}$.

Let $X$  be an indeterminate over $D$ and $\boldsymbol{c}(h)$ the content of a polynomial $h \in D[X]$.  Then,  we define
 $$
\begin {array} {rl}
\mbox{Kr}(D,\star) := \{ f/g  \mid & \hskip -4pt  f,g \in D[X], \ g
\neq 0,
\;
\mbox{ and there exists }\\ &  \hskip -4pt   h \in D[X] \setminus \{0\}
\; \mbox{
with } \boldsymbol{c}(f)\boldsymbol{c}(h)
\subseteq (\boldsymbol{c}(g)\boldsymbol{c}(h))^\star \,\}.
\end{array}
$$
This is a B\'ezout domain with quotient field $K(X)$, called {\it  the semistar Kronecker function ring  associated to  semistar operation $\star$} \cite[Theorem 5.1 and Theorem 3.11 (3)]{FL1}. Furthermore,  $\mbox{Kr}(D,\star) = \bigcap\{V(X) \mid V \in \calbV(\star)\}$ \cite[Corollary 3.8 and Theorem 5.1]{FL1}.  A key fact is the following (\cite[Corollary 3.4]{FL1} and \cite[Proposition 4.1(5)]{FL:2003}): for each $E\in \Fbar(D)$,
$$
E^{\star_a}=E^{{b(\star)}} = E\mbox{Kr}(D,\star) \cap K \,.
$$

Finally, recall that a nonzero fractional ideal $I$ of $D$ is called {\it $\star$-invertible} if $(II^{-1})^\star = D^\star$
and a domain $D$ is a {\it Pr\"ufer $\star$-multiplication domain} (for short, {\it P$\star$MD})
 if every finitely generated ideal of $D$ is $\starf$-invertible \cite[page 48]{HMM}. For $\star =d$,  a Pr\"ufer $d$-multiplication domain coincides with a Pr\"ufer domain;
 for $\star =v$, the P$v$MD's or  Pr\"ufer $v$-multiplication domains  generalize at the same time Pr\"ufer and Krull domains (\cite{Gr} and \cite{MZ}).

\medskip

After collecting, in Section 2, some properties of the $b$--operation needed later, in Section 3, we study the integral domains that are $b$-Noetherian (i.e., such that, for each nonzero ideal $I$ of $D$, $I^b = J^b$ for some finitely generated ideal $J$ of $D$).  For instance, we prove that a $b$-Noetherian domain has Noetherian spectrum and, if it is integrally closed, is a Mori domain, but integrally closed Mori domains with Noetherian spectra are not necessarily $b$-Noetherian.  We also characterize several distinguished classes of $b$-Noetherian domains and we investigate the local-global behaviour of the $b$--Noetherianity.

In Section 4, we study more generally the \texttt{eab} semistar operation of finite type $\star_a$ canonically  associated to a given semistar operation $\star$ (for instance, the $b$-operation is the
 \texttt{eab} semistar operation of finite type  canonically  associated to the identity operation). These operations, introduced and studied by Krull, Jaffard, Gilmer and Halter-Koch,  play a very important role in the recent generalizations of the Kronecker function ring. In particular, in the present section, we classify several classes of integral domains having  some of the fundamental operations $d$, $t$, $w$ and $v$ equal to some of the canonically associated \texttt{eab} operations $b$, $t_a$, $w_a$ and $v_a$. For instance, the integral domains such that $v$ coincides with $v_a$ [respectively, $t$ coincides with $t_a$; $w$ coincides with $w_a$] (considered as star operations)  are exactly the integrally closed domains such that $\Na(D, v)$ is a divisorial domain [respectively, the $v$--domains; the Pr\"ufer $v$-multiplication domains].
\ec

%%%%%%%%%%%%%%%%%%%%%%%%%

\section{Elementary properties of the $b$-operation}

{ Given $E \in \Fbar(D)$,} an element $z$ of $K$ is said to be {\it integrally dependent on  $E$} if it satisfies an equation of the form $z^q + a_1 z^{q-1} + \ldots + a_q = 0$, { where $q\geq 1$ and } $a_i \in E^i$ for all $i = 1, 2, \ldots, q$, \cite[Appendix 4, page 349]{ZS}. Equivalently, $z$ is integrally dependent on $E$ if { (and only if) } there exists a { nonzero} finitely generated $D$-submodule $H$ { of $K$} such that $zH \subseteq EH$.

It turns out that the set of the elements that are integrally dependent on $E$ is a $D$-submodule of $K$ which coincides with {\it the completion of $E$}, { i.e., with the $D$-submodule (denoted here by) $E^b$ of $K$ \cite[Appendix 4, Definition 1 and Theorem 1]{ZS}}.

In other words,  for all $E \in \Fbar(D)$:
$$
\bigcup \{(EH : H) \mid H \in  \f(D)\} = E^b =  \bigcap\{EV \mid V \mbox{ valuation overring of } D \}\,.
$$
\ec
 { Therefore, the $b$-operation} coincides with { the \texttt{eab}-semistar operation canonically associated to the identity (semi)star operation, i.e. $b = b(d)$.}
\ec
In particular, the $b$-operation is an \eab (in fact, \ab) semistar operation of finite type. Note that the quasi-$b$-ideals are exactly the  ideals  which are integrally closed (in $D$).

\ec
We collect in the following lemma some elementary facts about the $b$-operation.

%%%%% LEMMA \label{basic} - 1
\begin{leem} \label{basic} Let $D$ be an integral domain.
\begin{enumerate}
\item[(1)] $b = d_a$.
\item[(2)] $b$ is { an \texttt{ab}} semistar operation of finite type.
\item[(3)]  $P^b \cap D = P$ for each nonzero prime ideal $P$ of $D$.
\item[(4)]  $(\sqrt{I})^b \cap D = \sqrt{I}$ for each nonzero  ideal $I$ of $D$.
\item[(5)]  $\widetilde{b} = d$. In particular,   an ideal  of $D$ is $b$-inver\-tible if and only if   it   is inver\-tible.
\end{enumerate}
\end{leem}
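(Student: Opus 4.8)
The plan is to establish each of the five items essentially by unwinding the definitions and invoking the displayed formula for $E^b$ together with the general facts about $\star_a$ and $\start$ recalled in the introduction.

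For (1), I would simply compare definitions. The displayed identity just before the lemma gives $E^b = \bigcup\{(EH:H)\mid H\in\f(D)\}$ for all $E\in\Fbar(D)$. On the other hand, $d$ is the identity semistar operation, so $F^d=F$ and $(FH)^d=FH$ for all $F,H\in\f(D)$; plugging $\star=d$ into the defining formula $F^{\star_a}=\bigcup\{((FH)^\star:H^\star)\mid H\in\f(D)\}$ yields $F^{d_a}=\bigcup\{(FH:H)\mid H\in\f(D)\}$, and extending to arbitrary $E$ via $E^{d_a}=\bigcup\{F^{d_a}\mid F\subseteq E,\,F\in\f(D)\}$ matches the formula for $E^b$. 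Thus $b=d_a$. Item (2) is then immediate: by the general theory, $\star_a$ is always an \texttt{(e)ab} semistar operation of finite type, and the text has already identified $b=b(d)$ as an \texttt{ab} operation; alternatively it was stated verbatim in the paragraph preceding the lemma that $b$ is \texttt{ab} of finite type, so (2) merely records this.

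For (3) and (4), the key is the valuation-theoretic description $E^b=\bigcap\{EV\mid V \text{ valuation overring of } D\}$. For a prime $P$, I would show $P^b\cap D=P$ by noting $P\subseteq P^b\cap D$ always, and for the reverse inclusion, choosing a valuation overring $V$ dominating $D_P$ (so that $PV\cap D_P=PD_P$, whence $PV\cap D\subseteq P$); since $P^b\subseteq PV$, we get $P^b\cap D\subseteq PV\cap D\subseteq P$. For (4), since $\sqrt{I}=\bigcap\{P\mid P \text{ prime},\ P\supseteq I\}$ and the $b$-operation commutes with finite (indeed arbitrary) intersections in the sense that $(\bigcap_\alpha P_\alpha)^b\subseteq\bigcap_\alpha P_\alpha^b$, one gets $(\sqrt I)^b\cap D\subseteq\bigcap_\alpha(P_\alpha^b\cap D)=\bigcap_\alpha P_\alpha=\sqrt I$ using part (3), and the reverse inclusion is trivial. (Alternatively, (4) follows from the fact that radical ideals are integrally closed, the quasi-$b$-ideals being exactly the integrally closed ideals, as the text observes.)

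For (5), recall $\widetilde{b}$ is the stable operation of finite type associated to $b$, given by $E^{\widetilde b}=\bigcap\{ED_Q\mid Q\in\QMax^{b}(D)\}$. By (3) every nonzero prime is a quasi-$b$-ideal, so $\QMax^{b}(D)=\Max(D)$ (the set of all maximal ideals of $D$); hence $E^{\widetilde b}=\bigcap\{ED_M\mid M\in\Max(D)\}=E$, giving $\widetilde b=d$. The final sentence, that $b$-invertibility equals invertibility, then follows from the general principle that $\star$-invertibility coincides with $\widetilde\star$-invertibility for a finite-type operation, so $b$-invertibility is $d$-invertibility, i.e. ordinary invertibility. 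The main obstacle is item (3): one must justify the existence of a valuation overring $V$ of $D$ dominating the local ring $D_P$, which rests on the standard fact that every local ring admits a dominating valuation overring inside its quotient field; the rest is bookkeeping with the definitions.
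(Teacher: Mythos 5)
Your proof is correct and follows essentially the same route as the paper's: (1) and (2) by unwinding the definitions and the displayed formula $E^b=\bigcup\{(EH:H)\mid H\in\f(D)\}=\bigcap EV$; (3) via a valuation overring centered on $P$ (the paper cites Gilmer, Theorem 19.6, for its existence); (4) by writing $\sqrt I$ as an intersection of quasi-$b$-primes; and (5) from $\QMax^b(D)=\Max(D)$ together with the equivalence of $\starf$- and $\start$-invertibility. No gaps worth flagging.
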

%%%%
\begin{proof}  For (1) and (2), see the comments preceding Lemma \ref{basic}.

(3) If $P$ is a nonzero prime ideal, there exists a valuation overring $(V, M)$ of $D$ centered in $P$ { \cite[Theorem 19.6]{G}}, and so $P^b \cap D \subseteq PV \cap D \subseteq M \cap D = P$.

 (4)  The previous statement ensures that every nonzero prime ideal is a quasi-$b$-ideal. Radical ideals are also quasi-$b$-ideals, as intersections of quasi-$b$-ideals.

 (5) { The equality} $\widetilde{b} = d$ follows from (3) { (and from the definition of $\widetilde{b}$)}.
 The fact that $b$-invertible ideals are invertible is the consequence of the fact that an ideal is $\starf$-invertible if and only if it is $\start$-invertible (cf. for example \cite[Theorem 2.18]{FP}). { Explicitly, in the present situation, for all $F \in \f(D)$,
 $(FF^{-1})^b = D^b$ is equivalent to $FF^{-1} \not\subseteq M $ for all $M \in \QMax^b(D) = \Max(D)$, i.e., $FF^{-1} =D$.}
 \end{proof}
%%%%%

Let {\bf{SStar}}$(D)$  {[respectively, {\bf{SStar}}${_f}(D)$]} the set of all semistar operations   {[respectively, all semistar operations of finite type]} on $D$.  We can consider the maps $\widetilde{(...)} : $  {\bf{SStar}}$(D) \rightarrow $ {\bf{SStar}}${_f}(D)$, $\star \mapsto \start$, \
%and $\star  \mapsto \widetilde{\star}$
and \
$(...)_a:$ {\bf{SStar}}$(D) \rightarrow $ {\bf{SStar}}${_f}(D)$, $\star  \mapsto{\star}_a$.

The relations among\ $\widetilde{(\star_a)}$,\  $(\start)_a$,\
$\start$,\ and\  $\star_a$\ were already investigated in \cite{FL:2003}.  The next goal is to answer  the following  natural question:
when do the maps $(...)_a$ and $\widetilde{(...)}$ establish a bijection { on {\bf{SStar}}${_f}(D)$}?

Note that $\widetilde{(d_a)} = d$, but $(\widetilde{d})_a = b = d_a$, \  and $(\widetilde{b})_a = b$,  but  $\widetilde{(b_a)} = d =\widetilde{b}$. Therefore, using also \cite[Theorem 24.7]{G}, it is easy to verify the next lemma.

%%%%%%% LEMMA \label{prufer} - 2
\begin{leem} \label{prufer} Let $D$ be an integral domain. The following statements are equivalent.
\begin{enumerate}
\item[(i)] The maps $(...)_a$ and $\widetilde{(...)}$ establish a bijection { on {\bf{SStar}}${_f}(D)$}.

 \item[(ii)] $D$ is a Pr\"ufer domain.

 \item[(iii)]  $d = b$.

 \item[(iv)] {\bf{SStar}}${_f}(D) =\{d\}$.
 \end{enumerate}
 \end{leem}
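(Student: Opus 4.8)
The plan is to take (iii), $d=b$, as the pivot and prove $(\mathrm i)\Rightarrow(\mathrm{iii})\Rightarrow(\mathrm{ii})\Rightarrow(\mathrm i)$, treating (iv) separately. Everything starts from the four evaluations recorded just before the statement: $\widetilde d=d$ (trivially, $d$ being stable of finite type) and $\widetilde b=d$ (Lemma~\ref{basic}(5)), while $d_a=b$ and $b_a=b$ (Lemma~\ref{basic}(1)--(2), the latter because $b$ is already \texttt{(e)ab} of finite type). Both $d$ and $b$ lie in $\mathbf{SStar}_f(D)$, and each of the two maps identifies them: $\widetilde{(\ldots)}$ sends $d\mapsto d$ and $b\mapsto d$, while $(\ldots)_a$ sends $d\mapsto b$ and $b\mapsto b$. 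Hence if either map is a bijection of $\mathbf{SStar}_f(D)$ it is in particular injective, which forces $d=b$; this gives $(\mathrm i)\Rightarrow(\mathrm{iii})$.

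For $(\mathrm{iii})\Rightarrow(\mathrm{ii})$ I would use that $b$ is of finite type: $d=b$ is then equivalent to $F=F^b$ for every $F\in\f(D)$, i.e. to every finitely generated ideal being integrally closed, and by \cite[Theorem~24.7]{G} this is precisely the Pr\"ufer condition.

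The real content is $(\mathrm{ii})\Rightarrow(\mathrm i)$, and I expect this to be the main obstacle. Assuming $D$ Pr\"ufer, I would first establish the structural identity $E^\star=ED^\star$ for every $\star\in\mathbf{SStar}_f(D)$ and every $E\in\Fbar(D)$. The key tool is the elementary inclusion $B\,A^\star\subseteq(BA)^\star$ for finitely generated $B=(b_1,\dots,b_n)$, obtained by applying $(\boldsymbol{\star}_1)$ to each $b_iA^\star=(b_iA)^\star$ and summing. For $F\in\f(D)$ the ideal $F$ is invertible, so $F^\star=(FF^{-1})F^\star=F(F^{-1}F^\star)\subseteq F(F^{-1}F)^\star=FD^\star$, while $FD^\star\subseteq(FD)^\star=F^\star$ by the same inclusion; thus $F^\star=FD^\star$, and extending along the finite-type union gives $E^\star=ED^\star=E^{\wedge_{\{D^\star\}}}$. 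Since $D^\star$ is an overring of a Pr\"ufer domain it is flat over $D$, so extension to it commutes with finite intersections and $\star$ is stable, whence $\start=\star$; invertibility of finitely generated ideals makes every $\star$ an \texttt{(e)ab} operation, whence $\stara=\star$. Therefore both $\widetilde{(\ldots)}$ and $(\ldots)_a$ restrict to the identity on $\mathbf{SStar}_f(D)$, and in particular are mutually inverse bijections, giving $(\mathrm i)$.

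It remains to link (iv). The implication $(\mathrm{iv})\Rightarrow(\mathrm{iii})$ is immediate, since $b\in\mathbf{SStar}_f(D)$ by Lemma~\ref{basic}(2), so $\mathbf{SStar}_f(D)=\{d\}$ yields $b=d$; and $(\mathrm{ii})\Rightarrow(\mathrm{iv})$ is again read off $E^\star=ED^\star$, which collapses every finite-type operation fixing $D$ to the identity. Here the point I would slow down on is the precise scope of (iv): the operation $E\mapsto EK$ always belongs to $\mathbf{SStar}_f(D)$, so the equality $\mathbf{SStar}_f(D)=\{d\}$ has to be read among the operations with $D^\star=D$, and it is reconciling this reading with the two implications above — not the formal manipulations — that I would check with the most care, the structural identity $E^\star=ED^\star$ being the one genuinely substantial step.
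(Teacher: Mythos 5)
Your chain $(\mathrm i)\Rightarrow(\mathrm{iii})\Rightarrow(\mathrm{ii})\Rightarrow(\mathrm i)$ is correct and is exactly the route the paper sketches: its entire ``proof'' consists of the evaluations $\widetilde d=\widetilde b=d$ and $d_a=b_a=b$ displayed just above the statement, plus the reference to \cite[Theorem 24.7]{G} for $(\mathrm{iii})\Leftrightarrow(\mathrm{ii})$, everything else being declared ``easy to verify''. You supply the one step that actually requires an argument, namely $(\mathrm{ii})\Rightarrow(\mathrm i)$, via the identity $E^\star=ED^\star$ for every finite-type $\star$ on a Pr\"ufer domain; your derivation (invertibility of $F\in\f(D)$ gives $F^\star=FD^\star$, flatness of the overring $D^\star$ gives stability, invertibility again gives the \texttt{(e)ab} property, so both maps act as the identity) is sound and is surely the intended content of ``easy to verify''.

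The point you raise about (iv) is, however, a genuine defect of the statement and not merely a matter of careful reading, and your proposed repair does not close the cycle either. With $\mathbf{SStar}_f(D)$ as the paper defines it (all finite-type semistar operations), the operation $E\mapsto EK$ shows that (iv) holds only when $D=K$, so $(\mathrm{ii})\Rightarrow(\mathrm{iv})$ fails for every Pr\"ufer domain that is not a field (your $(\mathrm{iv})\Rightarrow(\mathrm{iii})$ is fine under this literal reading, since $b$ is always a finite-type semistar operation). If instead one restricts (iv) to the finite-type (semi)star operations, i.e.\ those with $D^\star=D$, then $(\mathrm{ii})\Rightarrow(\mathrm{iv})$ holds by your identity, but now $(\mathrm{iv})\Rightarrow(\mathrm{ii})$ fails: any (semi)star operation satisfies $\ast\leq v$ on $\F(D)$, so a domain with $d=v$ on $\F(D)$ --- e.g.\ a pseudo-valuation non-valuation domain whose associated valuation ring is two-generated, a class the paper itself invokes in a later remark --- has $d$ as its only finite-type (semi)star operation without being Pr\"ufer; it is not even integrally closed, so $b$ is not a (semi)star operation there and your $(\mathrm{iv})\Rightarrow(\mathrm{iii})$ argument is no longer available. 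Thus the equivalence of (iv) with the other three conditions cannot be established under either reading; the robust part of the lemma, and the only part the paper uses later, is $(\mathrm i)\Leftrightarrow(\mathrm{ii})\Leftrightarrow(\mathrm{iii})$, which you have proved.
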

 \rm \ec

%%%%%%%%%%%%%%%%%%%%%
%%%%%%%%%%%%%%%%%%%%%
\section{$b$-Noetherian domains}
%%%%%%%%%%%%%%%%%%%%%

Recall that an integral domain $D$ is {\it $\star$-Noetherian} if the ascending chain condition
on the quasi-$\star$-ideals of $D$ holds.
If $\star = d$ (where $d$ is the identity (semi)star operation), we have the classical Noetherian domains, if $\star = v$ {  this definition gives back} the Mori domains   \cite[Theorem 2.1]{B:2000},
and if $\star = w$ we obtain the class of strong Mori domains  \cite{WMc1}.
It is well known that a Noetherian domain is characterized by the fact that
each ideal is finitely generated. The semistar version of this characterization
uses the concept of $\starf$-finiteness: if $E \in \Fbar(D)$, we say that $E$ is {\it $\starf$-finite} if
there exists $F \in \f(D)$, $F \subseteq E$, such that $F^{\starf} = E^{\starf}$ (see for instance  \cite[p.650]{FP}).

A $\star$-Noetherian domain is characterized by the fact that each nonzero ideal of $D$ is $\starf$-finite
\cite[Lemma 3.3]{EFP:2004}.

Note that, from this characterization, it follows that D is $\star$-Noetherian if and
only if it is $\starf$-Noetherian.
Finally, we notice that if $\star_1 \leq \star_2$ are two semistar operations on $D$, then $D$ is
$\star_1$-Noetherian implies $D$ is $\star_2$-Noetherian.
Note that the converse does not hold, since a domain can be $\starf$-Noetherian, but not $\start$-Noetherian  \cite[p. 159]{WMc2}.  { When $\star =\start$, i.e. when $\star$ is a  stable semistar operation of finite type, Picozza has shown that several classical properties of Noetherian domains  can be extended to $\start$-Noetherian domains \cite{P:2007}. This is not true for general semistar operations: for instance if $D$ is a (non-integrally closed) $v$-Noetherian domain, then $D[X]$ is not necessarily $v$-Noetherian  \cite[\S 3, Th\'eor\`eme 5]{Querre} and \cite[Theorem 8.4]{Roitman}, but it is true that if $D$ is $w$-Noetherian, then $D[X]$ is  $w$-Noetherian \cite[Theorem 1.13]{WMc2}, \cite[Theorem 4.7]{Park} and \cite[Theorem 2.2]{Chang}.

%%%%% REMARK - 3
\begin{reem} \rm When $\ast$ is a star operation on $D$, it is clear that, if $D$ is $\ast$-Noetherian
then $\ast$ is a star operation of finite type (that is, $\ast =\ast_{\!{_f}}$). Indeed, for each nonzero
 { (fractional)} ideal $I$ of $D$ there exists a finitely generated { (fractional)}  ideal $F$ of $D$, $F \subseteq I$, such that $I^\ast = F^\ast$. So,
$I^\ast = F^\ast = F^{\ast_{\!{_f}}}$, thus $I =  I^{\ast_{\!{_f}}}$.

When $\star$ is a semistar operation on $D$, it is still true that $I^{\star}= I^{\star_{\!{_f}}}$ for each nonzero
(fractional) ideal $I$ of $D$, but this is not enough to say that $\star$ is a semistar operation on $D$ of finite type
(even if $\star$ is a (semi)star operation). For instance, let $D$ be a Noetherian domain
that is not conducive  (that is, there exists a proper overring $T$ of $D$, $T \neq K$, such that $T \in  \Fbar(D) \setminus \F(D)$, { i.e., $(D:T) =(0)$  \cite{DF:1984}}). Consider the semistar operation $d_e$ on $D$ defined as follows:
$E^{d_e} := E$ if $E \in \F(D)$ and $E^{d_e} := K$ otherwise (this semistar operation is the so-called
{\it trivial semistar extension of the identity star operation $d$  on $D$}  \cite[Proposition 17]{OM2}). It is
clear that $D$ is $d_e$-Noetherian, but it is easy to check that $d_e$ is not a semistar
operation of finite type.
\end{reem}
%%%%%%%%%

\smallskip

In particular, a $b$-Noetherian domain is a domain in which the ascending chain condition on quasi-$b$-ideals hold. Equivalently, since $b$ is a semistar operation of finite type, if for every nonzero { (fractional)} ideal $I$ of $D$, $I$ is $b$-finite, that is, there exists a finitely generated { (fractional)} ideal $F$ (which can be taken inside $I$  by \cite[Lemma 2.3]{FP}, since $b$ is of finite type), such that $F^b = I^b$.

\bigskip

The next goal is to give an example of a $b$-Noetherian domain that is not Noetherian.

%%%%%%% LEMMA \label{b-bbar} - 4
{\begin{leem} \label{b-bbar} Let $D$ be an integral domain and $\Dbar$ its integral closure. Set $b :=b_D$ and $\overline{b} := b_{\Dbar}$.
\begin{enumerate}
\item[(1)]  If $\Dbar$ is  $\overline{b}$-Noetherian, then $D$ is $b$-Noetherian.

\item[(2)] If $D$ is $b$-Noetherian and $(D :\Dbar) \neq (0)$, then
  $\Dbar$ is  $\overline{b}$-Noetherian.
  \end {enumerate}
\end{leem}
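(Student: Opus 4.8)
The plan rests on one elementary but decisive observation: every valuation overring of $D$ is integrally closed and contains $D$, hence contains $\Dbar$, so $D$ and $\Dbar$ have exactly the same valuation overrings. Consequently, for any $\Dbar$-submodule $E$ of $K$ one has $E^b = E^{\overline{b}}$, and for any ideal $I$ of $D$ one has $IV = (I\Dbar)V$ for each valuation overring $V$ (because $\Dbar V = V$), whence
$$
I^b = (I\Dbar)^{\overline{b}} \qquad \text{and} \qquad (F\Dbar)^{\overline{b}} = F^b \ \text{ for } F \in \f(D).
$$
I would record these identities first, together with the $b$-finiteness criterion for $b$-Noetherianity noted before the lemma, since both parts are proved by producing, for a given ideal, a finitely generated subideal with the same completion.

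For part (1), start from a nonzero ideal $I$ of $D$ and pass to the ideal $I\Dbar$ of $\Dbar$. Since $\Dbar$ is $\overline{b}$-Noetherian, there is a finitely generated $\Dbar$-ideal $\overline{F} \subseteq I\Dbar$ with $\overline{F}^{\overline{b}} = (I\Dbar)^{\overline{b}}$. The key move is to descend finite generation to $D$: writing each generator of $\overline{F}$ as a finite $\Dbar$-combination of elements of $I$ and letting $F$ be the $D$-ideal generated by the elements of $I$ so appearing, one gets a finitely generated ideal $F \subseteq I$ of $D$ with $\overline{F} \subseteq F\Dbar \subseteq I\Dbar$. Applying $\overline{b}$ and squeezing gives $(F\Dbar)^{\overline{b}} = (I\Dbar)^{\overline{b}}$, and then the displayed identities yield $F^b = I^b$. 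Thus $I$ is $b$-finite and $D$ is $b$-Noetherian.

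For part (2), fix $0 \neq c \in (D:\Dbar)$, so that $c\Dbar \subseteq D$. Given a nonzero ideal $J$ of $\Dbar$, the module $cJ$ is an ideal of $D$ (it is a $\Dbar$-submodule of $K$ contained in $D$); since $D$ is $b$-Noetherian, choose a finitely generated $D$-ideal $F \subseteq cJ$ with $F^b = (cJ)^b$. Using the homogeneity property $(\boldsymbol{\star}_1)$ and the identities above, $(cJ)^b = c\,J^{\overline{b}}$ and $(F\Dbar)^{\overline{b}} = F^b$, so the finitely generated $\Dbar$-ideal $c^{-1}F\Dbar \subseteq J$ satisfies $(c^{-1}F\Dbar)^{\overline{b}} = c^{-1}F^b = J^{\overline{b}}$. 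Hence $J$ is $\overline{b}$-finite and $\Dbar$ is $\overline{b}$-Noetherian.

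I expect the genuine content to lie in two places. In (1) the obstacle is the descent of finite generation from $\Dbar$ to $D$: a finitely generated $\Dbar$-ideal need not be finitely generated as a $D$-ideal, so one must exploit that only the $b$-completion must be preserved, together with $b = \overline{b}$ on common modules, to replace $\overline{F}$ by a genuine finitely generated $D$-ideal $F$. In (2) the whole argument hinges on the nonzero conductor, which is exactly what allows an ideal of $\Dbar$ to be transported inside $D$ (via multiplication by $c$) and back; without $(D:\Dbar) \neq (0)$ this bridge disappears, which is why the hypothesis is indispensable.
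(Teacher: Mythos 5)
Your proof is correct and follows essentially the same route as the paper: both rest on the observation that $D$ and $\Dbar$ have the same valuation overrings, so that $I^b=(I\Dbar)^{\overline{b}}$ and $E^b=E^{\overline{b}}$ on $\Fbar(\Dbar)$, and part (2) is virtually identical to the paper's argument. The only difference is in (1), where the paper simply cites a general transfer lemma (\cite[Lemma 3.1(2)]{EFP:2004}) for $\star$-Noetherianity along an overring inclusion, while you unwind that citation into the explicit descent-of-generators argument; this is exactly how the cited lemma is proved, so nothing is gained or lost.
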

%%%%%%%%%
\begin{proof}  Let $\iota: D \hookrightarrow \Dbar$ be the canonical inclusion. Note that the semistar operation  $\overline{b}^\iota$ on $D$, defined by $E^{\overline{b}^\iota} := (E\Dbar)^{\overline{b}}$ for all $E \in \Fbar(D)$, coincides with $b$, since $\Dbar$ and $D$ have the same valuation overrings. Conversely, for the same reason, the semistar operation $b_\iota$ on $\Dbar$ defined by $E^{{b}_\iota} := E^{{b}}$ for all $E \in \Fbar(\Dbar) \ (\subseteq \Fbar(D))$, coincides with $\overline{b}.$

(1)  It is not difficult to see that if $i: D \hookrightarrow T$ is the canonical inclusion of $D$ in its overring $T$ and if $\star^\prime$ is a semistar operation on $T$, then $T$ $\star^\prime$-Noetherian implies that $D$ is $(\star^\prime)^i$-Noetherian \cite[Lemma 3.1(2)]{EFP:2004}.  For $T = \Dbar$, $\star^\prime = \overline{b}$, and $i = \iota$, if $\Dbar$ is $\overline{b}$-Noetherian, then we conclude that  $D$ is $b$-Noetherian.

(2) Suppose that $D$ is $b$-Noetherian and let $J$ be a nonzero ideal of $\Dbar$. For any $0 \neq x \in (D :\Dbar)$, $I:=xJ$ is a nonzero ideal of $D$ and so for some $F\in \f(D)$, with $F \subseteq I$, $F^b = I^b$. Therefore, if $G :=x^{-1}F\Dbar$, then $G \in \f(\Dbar)$, $G \subseteq J$ and $G^{\overline{b}} = G^b =
x^{-1}(F\Dbar)^b = x^{-1}F^b = x^{-1}I^b = J^b = J^{\overline{b}}$.
\end{proof}
%%%%%%%%%

\smallskip

\smallskip

%%%%%% EXAMPLE - 5
\begin{exxe} \ec { \sl Example of non-Noetherian $b$-Noetherian domain with Noetherian integral closure.}

\rm
Let $D$ be a non Noetherian domain with Noetherian integral closure $\overline{D}$ (e.g., $D := \mathbb{Q} + X \overline{\mathbb{Q}}[\![X]\!]
$, where $\overline{\mathbb{Q}}$ is the field of algebraic numbers, i.e., the algebraic closure of the field of rational numbers  $\mathbb{Q} $ in $\mathbb{C} $.
Note that $D$ is a 1-dimensional Mori non Noetherian non integrally closed local domain with Noetherian spectrum and integral closure $\overline{D} = \overline{\mathbb{Q}}[\![X]\!]$, { which is} a Noetherian domain { \cite[Theorem 3.2]{B:1983}, \cite[Corollary 15(5), Propositions 1.8 and 2.1(7)]{F:1980}}). Since $\Dbar$ is Noetherian, it is $b_{\Dbar}$-Noetherian.
 If $\iota$ is the canonical embedding of $D$ in $\Dbar$,   from Lemma \ref{b-bbar}(1), we have that $D$ is $b_{D}$-Noetherian.
  \end{exxe}
  %%%%%%%%%%%%%
   \ec

%%%% PROPOSITION \label{b-noeth} - 6
\begin{prro} \label{b-noeth} Let $D$ be a $b$-Noetherian domain. Then,
\begin{enumerate}
\item[(1)]  $D$ has Noetherian spectrum.

\item[(2)] Let  $V$ be a valuation overring of $D$. Then, for every { (fractional)}  ideal $I$ of $D$, $IV$ is a principal { (fractional)} ideal of $V$. In particular, if $V$ is an essential valuation overring of $D$, $V$ is a rank 1 discrete valuation domain.

\item[(3)] Assume, moreover, that $D$ is integrally closed, then $D$ is a Mori domain.
\end{enumerate}
\end{prro}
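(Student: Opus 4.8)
The plan is to dispatch (1) and (3) quickly using the material recorded above, and to reserve the actual computation for (2).

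\emph{Part (1).} I would invoke the standard fact that a domain has Noetherian spectrum exactly when it satisfies the ascending chain condition on radical ideals. By Lemma~\ref{basic}(4) every nonzero radical ideal $J$ of $D$ satisfies $J^b\cap D=J$ and is therefore a quasi-$b$-ideal. Since $D$ is $b$-Noetherian it satisfies the ACC on quasi-$b$-ideals, and restricting this chain condition to the subfamily of radical ideals yields the ACC on radical ideals; hence $\Spec(D)$ is Noetherian.

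\emph{Part (2).} Fix a valuation overring $V$ and a nonzero (fractional) ideal $I$ of $D$. As $b$ is of finite type, $b$-Noetherianity provides $F\in\f(D)$ with $F\subseteq I$ and $F^b=I^b$. The key point is the equality $IV=FV$: the inclusion $FV\subseteq IV$ is clear from $F\subseteq I$, while $I\subseteq I^b=F^b\subseteq FV$ (the last step because $V$ is among the valuation overrings whose intersection defines $F^b$) gives $IV\subseteq FV$. Writing $F=(a_1,\dots,a_n)D$, the ideal $IV=FV=a_1V+\dots+a_nV$ is generated over the valuation domain $V$ by the $a_i$ of least value, hence is principal. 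For the final assertion I would take $V=D_P$ essential; every ideal of $D_P$ is extended from $D$ (namely $J=(J\cap D)D_P$) and so, by what was just proved, is principal. Thus $V$ is a valuation domain all of whose ideals are principal, i.e.\ a Noetherian valuation domain, and being a proper overring it is a rank $1$ discrete valuation domain.

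\emph{Part (3).} Now assume $D$ is integrally closed, so that $b=b(d)$ is a (semi)star operation ($D^b=D$). I would check that $b\le v$. For $E\in\F(D)$, the identity $(D:E)=(D:E^b)$ (valid for any star operation) gives $E^b\subseteq(E^b)^v=(D:(D:E))=E^v$; for $E\in\Fbar(D)\setminus\F(D)$ one has trivially $E^b\subseteq K=E^v$. Hence $b\le v$, and since $\star_1\le\star_2$ forces $\star_1$-Noetherianity to imply $\star_2$-Noetherianity, the $b$-Noetherian domain $D$ is $v$-Noetherian, that is, Mori. (Equivalently, $b\le v$ shows every divisorial ideal $I$ satisfies $I\subseteq I^b\subseteq I^v=I$, so divisorial ideals are $b$-ideals and the ACC transfers to them directly.) The only step requiring genuine computation is the identity $IV=FV$ in (2); parts (1) and (3) are essentially immediate from Lemma~\ref{basic} together with the monotonicity of $\star$-Noetherianity, so I expect (2) to be where the care is needed.
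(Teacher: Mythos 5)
Your proof is correct. Part (2) is essentially the paper's own argument: both pick $F\in\f(D)$ with $F\subseteq I$ and $F^b=I^b$ and compute $IV=I^bV=F^bV=FV=aV$, then localize for the essential case. Parts (1) and (3) reach the same conclusions by genuinely different (and equally valid) routes. For (1), the paper uses the characterization of Noetherian spectrum by ``every prime is the radical of a finitely generated ideal'': it takes $F\subseteq P$ finitely generated with $F^b=P^b$, squeezes $P^b=F^b\subseteq(\sqrt{F})^b\subseteq P^b$, and concludes $\sqrt{F}=P$ from Lemma \ref{basic}. You instead use the Ohm--Pendleton characterization by the ascending chain condition on radical ideals, noting that nonzero radical ideals are quasi-$b$-ideals by Lemma \ref{basic}(4), so the chain condition restricts to them. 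Your version is shorter; the paper's version produces the explicit and slightly stronger output that every prime is the radical of a finitely generated ideal, a form it exploits again later (compare Lemma \ref{lemma} and the end of the proof of Theorem \ref{b-1-loc}). For (3), the paper obtains $b\le t$ by citing Gilmer's general bound on finite-type star operations of an integrally closed domain, while you derive $b\le v$ by hand from $(D:E)=(D:E^b)$ (legitimate here precisely because $D^b=D$); either inequality yields $v$-Noetherianity, i.e.\ the Mori property. The only wording to tighten is the final clause of your (2): ``being a proper overring'' should be replaced by the observation that $P$ is a nonzero prime, so $D_P$ is not a field, and a Noetherian valuation domain that is not a field is a rank one discrete valuation domain.
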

%%%%%%%%
 \begin{proof} (1)
It is  well known that $\Spec(D)$ is Noetherian if and only if each prime ideal of $D$ is the radical of a finitely generated ideal (cf., for instance, \cite[Theorem 3.1.11]{FHP}). Let $P$ be a nonzero prime ideal of $D$.   Since $D$ is $b$-Noetherian, there exists a finitely generated ideal $F$ of $D$,  $F \subseteq P$, such that $F^b = P^b$. Since $F \subseteq \sqrt{F} \subseteq P$, we have $P^b = F^b \subseteq \sqrt{F }^b \subseteq P^b$. By Lemma \ref{basic}(5),  $\sqrt{F} = \sqrt{F}^b \cap D = P^b \cap D = P$ and $P$ is the radical of a finitely generated ideal.

(2)   Let $I$ be a nonzero (fractional) ideal of $D$. By $b$-Noetherianity, there exists a finitely generated { (fractional)} ideal $F$ of $D$, $F \subseteq I$, such that $I^b = F^b$. Thus $IV = I^bV = F^b V = FV = aV$ for some $a \in F$. If, moreover, $V$ is an essential valuation overring of $D$, then $V =D_P$ for some prime ideal $P$ of $D$. So, for each nonzero prime ideal $Q$ of $D$, $Q\subseteq P$, $QD_P$ is a principal ideal of $D_P$.  Therefore, $D_P$ is a Noetherian domain, thus $D_P$ is a rank 1 discrete valuation domain.

 (3) Note that, in this case $D = D^b$, since $D^b$ coincides with the integral closure of $D$, and so $b \leq t$ (see also \cite[Theorem 34.1(4)]{G}), because we already observed that $b$ is a semistar operation of finite type (Lemma \ref{basic}(2)). Therefore, since $D$ is $b$-Noetherian, { $D$} is also $t$-Noetherian  (i.e., Mori).
 \end{proof}
 %%%%%%%%%%%

%%%% REMARK \label{remark:conducive} - 7
 \begin{reem} \label{remark:conducive}
 \rm
{\bf (a)} By Proposition \ref{b-noeth}(2),  if $V$ is a valuation overring of a $b$-Noetherian domain $D$ such that each ideal
of $V$ is the extension of a fractional ideal of $D$, then $V$ is a DVR. This is the case, for
example, of the following situations:
\begin{enumerate}
\item[(1)]
$(D:V)\neq (0)$ (or, equivalently, $D$ is a {\it  conducive domain} \cite[Theorem 3.2]{DF:1984}, i.e., for every overring $T$ of $D$, $(D:T) \neq (0)$);

\item[(2)] $V$ is flat over $D$ (or, equivalently, $V$ is an essential valuation overring of $D$);

\item[(3)] $V$ well-centered on $D$ (i.e., if each principal ideal of $V$ is generated
by an element of $D$ \cite[page 435]{HR:2004}).
 \end{enumerate}
 The cases   (2) and (3) are considered by
Sega \cite[Proposition 3.8]{S:2007}.

{\bf (b)} From (a), we deduce that a $b$-Noetherian valuation domain (as a valuation overring of a $b$-Noetherian domain) is a DVR. Note that this property can be observed also as a straightforward consequence of the fact that in a valuation domain $b = d$ (Lemma 2).
\end{reem}
%%%%%%%%%%%%%%

Recall that a Noetherian conducive domain (not a field) is one-dimensional and local \cite[Corollary 2.7]{DF:1984}.   More generally, one can easily deduce from \cite[Proposition 3.21]{HR:2004} that the same holds for Mori domains.  From Remark \ref{remark:conducive}, we   obtain  the same result for $b$-Noetherian conducive domains.

%%%%% PROPOSITION {prop:conducive} - 8
\begin{prro} \label{prop:conducive}
Let $D$ be a $b$-Noetherian conducive domain { with quotient field $K$, $D\neq K$}. Then, $D$ is local and one-dimensional.
\end{prro}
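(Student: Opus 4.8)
The goal is to show that $D$ has a unique nonzero prime ideal, which is equivalent to $D$ being local and one-dimensional: if $Q$ is the only nonzero prime, then it is the unique maximal ideal and the only prime chain is $(0)\subsetneq Q$, so $\dim D = 1$; conversely, in a local one-dimensional domain every nonzero prime is maximal, hence equals the unique maximal ideal. Since $D\neq K$ is not a field, it has at least one nonzero prime, so it suffices to prove that any two nonzero primes coincide.

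The engine is a single valuation estimate combining the three facts at our disposal: (a) by Remark \ref{remark:conducive}(a), since $D$ is conducive, every valuation overring of $D$ is a DVR (in particular of rank one); (b) by \cite[Theorem 19.6]{G}, every nonzero prime $Q$ is the center of some valuation overring $W$ of $D$, which by (a) is a DVR; (c) conduciveness gives $(D:T)\neq (0)$ for every overring $T\neq K$. Suppose, for contradiction, that $P$ and $Q$ are nonzero primes with $Q\not\subseteq P$, and pick $t\in Q\setminus P$ (so $t\neq 0$). Since $t\notin P$ we have $t^{-1}\in D_P$, whence $D[t^{-1}]\subseteq D_P\neq K$; as $D[t^{-1}]$ is an overring of $D$ distinct from $K$, (c) yields some $0\neq c\in (D:D[t^{-1}])$, so that $ct^{-n}\in D$ for every $n\geq 1$. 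Now let $(W,\mathfrak{m}_W)$ be a DVR overring of $D$ centered at $Q$, as in (b), with associated valuation $v_W$. Then $t\in Q\subseteq \mathfrak{m}_W$ gives $v_W(t)\geq 1$, while $ct^{-n}\in D\subseteq W$ gives $v_W(c)\geq n\,v_W(t)\geq n$ for all $n$; since $W$ is a DVR (value group $\mathbb{Z}$), this forces $v_W(c)=+\infty$, i.e. $c=0$, a contradiction. Hence $Q\subseteq P$ for any two nonzero primes $P,Q$; by symmetry $P=Q$, so $D$ has a unique nonzero prime, and is therefore local and one-dimensional.

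The step carrying the real weight is the valuation estimate, and it is here that the hypotheses enter essentially: the divisibility $v_W(c)\geq n\,v_W(t)$ for all $n$ only forces $c=0$ because $W$ has rank one; for a higher-rank valuation overring this would be compatible with $v_W(c)$ lying in a larger convex subgroup of the value group, and no contradiction would ensue. Thus the DVR conclusion of Remark \ref{remark:conducive}\,---\,which is exactly where both $b$-Noetherianity and conduciveness are used\,---\,is not a convenience but the crux of the argument; the remaining ingredients, namely the existence of a valuation overring centered at a prescribed prime and the nonvanishing of the conductor of a proper overring, are immediate from the cited results. (One could alternatively deduce one-dimensionality from the inequality $\dim D\leq \dim_v D = \sup\{\operatorname{rank} V\}=1$ via valuative dimension, but the valuation estimate above makes this detour unnecessary and handles locality by the same mechanism.)
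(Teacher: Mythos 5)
Your proof is correct, but it takes a genuinely different route from the paper's in how it extracts the conclusion from the key fact. The paper splits the statement in two: one-dimensionality follows because every valuation overring of $D$ is a DVR (Remark \ref{remark:conducive}(a)) combined with \cite[Theorem 30.8]{G} (the Krull dimension is bounded by the valuative dimension, which is $1$ here), and locality is then imported wholesale from the fact that a conducive domain has at most one prime of height $1$ \cite[Theorem 2.4]{DF:1984}. You instead prove directly that $D$ has a unique nonzero prime via the conductor-plus-rank-one estimate: if $t\in Q\setminus P$, then $ct^{-n}\in D$ for a fixed nonzero $c$ in the conductor of the overring $D[t^{-1}]\subseteq D_P$, and this is incompatible with a rank-one valuation centered at $Q$, since it forces $v_W(c)\geq n$ for all $n$. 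This buys self-containedness --- both external citations are replaced by a single elementary argument, and the same mechanism delivers locality and one-dimensionality simultaneously --- at the cost of re-proving, in effect, a special case of the Dobbs--Fedder result on conducive domains. Both arguments draw all of their real strength from the same place, namely the DVR conclusion of Remark \ref{remark:conducive}(a), which is exactly where the $b$-Noetherian hypothesis enters; your closing observation that rank one is indispensable (a higher-rank value group would absorb the inequalities $v_W(c)\geq n$ without contradiction) is accurate and worth keeping.
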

%%%%%%%
\begin{proof}
By Remark \ref{remark:conducive}(a), every valuation overring of $D$ is a DVR. So, $D$ has dimension $1$ { \cite[Theorem 30.8]{G}}. { On the other hand,} conducive domains have at most one prime of height $1$ \cite[Theorem 2.4]{DF:1984}, so $D$ is necessarily local.
\end{proof}
%%%%%%%%%%

 From Proposition \ref{b-noeth}(2), we deduce immediately the following.

 %%%%%% COROLLARY \label{valuation} - 9
 %\begin{coor}  \label{valuation} A valuation domain { (not a field)} is $b$-Noetherian if and only if it is a rank 1 discrete valuation domain.
% \end{coor}
% %%%%%%%
% Later (in Corollary \ref{conducive-b}), we will give a characterization of the conducive $b$-Noetherian domains which in particular recovers Corollary \ref{valuation}.

 By the previous Proposition \ref{b-noeth}, for finding an example of a $b$-Noetherian non-Noetherian integrally closed domain, one should look among the examples of (non Noetherian integrally closed) Mori domains with Noetherian spectrum.

Recall that a Pr\"ufer $v$-multiplication domain is characterized by the fact that the localizations at its $t$-maximal ideals are valuation domains  \cite[Theorem 5]{Gr}. Since each domain is intersection of the localizations at its $t$-maximal ideals  \cite[Proposition 4]{Gr}, a P$v$MD is integrally closed.

 It is easy to prove the following.

%%%%%%% PROPOSITION {PbMD} -10
\begin{prro} \label{PbMD}
\begin{enumerate}
\item[(1)]
Pr\"ufer $b$-multiplication domains coincide with a Pr\"ufer domains.
\item[(2)] The following classes of integral domains coincide:
{
\begin{enumerate}
\item[(i)] $b$-Noetherian Pr\"ufer domains;
\item[(ii)]  $b$-Dedekind domains (i.e., $b$-Noetherian Pr\"ufer $b$-multiplication domains \cite[Proposition 4.1]{EFP:2004});
 \item[(iii)]  Dedekind domains.
\end{enumerate}
}
\end{enumerate}
\end{prro}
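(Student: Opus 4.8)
The plan is to reduce the entire statement to three facts already in hand: that $b$ is of finite type (Lemma \ref{basic}(2)), that $b$-invertibility coincides with ordinary invertibility (Lemma \ref{basic}(5)), and that a domain is Pr\"ufer exactly when $b = d$ (Lemma \ref{prufer}).

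For part (1), I would begin by unwinding the definition of a Pr\"ufer $b$-multiplication domain. Since $b = b_f$ by Lemma \ref{basic}(2), the defining condition is that every nonzero finitely generated ideal of $D$ is $b$-invertible. By Lemma \ref{basic}(5), an ideal is $b$-invertible if and only if it is invertible, so this condition is equivalent to saying that every nonzero finitely generated ideal of $D$ is invertible. This is precisely the classical characterization of a Pr\"ufer domain, which settles part (1).

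For part (2), the coincidence of (i) and (ii) is then immediate: by part (1) a Pr\"ufer $b$-multiplication domain is simply a Pr\"ufer domain, so the class of $b$-Noetherian Pr\"ufer $b$-multiplication domains in (ii) is literally the class of $b$-Noetherian Pr\"ufer domains in (i). To identify (i) with (iii), I would invoke Lemma \ref{prufer}: a domain is Pr\"ufer if and only if $b = d$. Hence on any Pr\"ufer domain the $b$-operation is the identity, and the $b$-Noetherian hypothesis collapses to the ordinary (i.e., $d$-) Noetherian hypothesis. A Noetherian Pr\"ufer domain is exactly a Dedekind domain, so a $b$-Noetherian Pr\"ufer domain is exactly a Dedekind domain, giving (i) $=$ (iii).

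The argument carries no real obstacle; it is entirely a matter of invoking the correct identities. The only point worth flagging is that the step ``Pr\"ufer $\Rightarrow b = d$'', and therefore ``$b$-Noetherian $\Leftrightarrow$ Noetherian'' on Pr\"ufer domains, is exactly the content of Lemma \ref{prufer} and should be cited as such rather than re-derived; with that in place both inclusions between (i) and (iii) follow symmetrically from the equality $b = d$.
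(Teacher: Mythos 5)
Your proposal is correct and follows essentially the same route as the paper: part (1) reduces to the facts that $b$ is of finite type and that $b$-invertibility equals ordinary invertibility (Lemma \ref{basic}), and part (2) reduces to the equivalence ``Pr\"ufer $\Leftrightarrow$ $b=d$'' of Lemma \ref{prufer} together with the classical fact that a Noetherian Pr\"ufer domain is Dedekind. No gaps; your write-up simply spells out the steps the paper leaves implicit.
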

%%%%%%%%
\begin{proof}
(1) Recall that  $\widetilde{b} = d$;  {  we have also  observed that a nonzero fractional ideal is $b$-invertible if and only if it is invertible (Lemma \ref{basic}(3)).}

(2) It is enough to recall that an integral domain is
Pr\"ufer if and only if $d =b$ (Lemma \ref{prufer}) and that a Noetherian Pr\"ufer domain is a Dedekind domain.
\end{proof}
%%%%%%%%%%

In Proposition \ref{PbMD}(2, i), if we replace the assumption  ``Pr\"ufer domain'' with the weaker assumption ``Pr\"ufer $v$-multiplication domain'',  we obtain the following.

%%%%% COROLLARY {pvmd}  - 11
\begin{coor} \label{pvmd} A $b$-Noetherian Pr\"ufer $v$-multiplication domain is a Krull domain.
\end{coor}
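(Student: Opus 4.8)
The plan is to realize $D$ as a locally finite intersection of rank-one discrete valuation overrings, which is precisely the defining property of a Krull domain. I would exploit that a Pr\"ufer $v$-multiplication domain is, by construction, the intersection of the valuation domains obtained by localizing at its $t$-maximal ideals, and then upgrade each such localization to a DVR and establish finite character, using the two conclusions of Proposition \ref{b-noeth}.

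First I would recall that, since $D$ is a PvMD, one has $D = \bigcap\{D_M \mid M \in \Max^t(D)\}$ with each $D_M$ a valuation domain (as noted just before the statement). Each $D_M$ is a localization of $D$, hence flat, and therefore an essential valuation overring of $D$ (cf.\ Remark \ref{remark:conducive}(a)(2)). Thus Proposition \ref{b-noeth}(2) applies and gives that every $D_M$ is a rank-one discrete valuation ring; in particular each $t$-maximal ideal $M$ has height one, being the contraction to $D$ of the maximal ideal of the one-dimensional domain $D_M$.

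Next I would produce the finite character condition from Proposition \ref{b-noeth}(1). Fix a nonzero nonunit $x \in D$. Since $D$ has Noetherian spectrum, the principal ideal $xD$ has only finitely many minimal primes $P_1, \dots, P_n$. If $M$ is any $t$-maximal ideal with $x \in M$, then $xD \subseteq M$, so $M$ contains some minimal prime $P_i$ of $xD$; as $P_i$ is nonzero (it contains $x$) and $\mathrm{ht}(M) = 1$, we must have $P_i = M$. Hence $x$ is a nonunit in at most the finitely many localizations $D_{P_1}, \dots, D_{P_n}$ among the family $\{D_M\}$, which is exactly the finite character property.

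Combining these two points, $D$ is the intersection of the family $\{D_M \mid M \in \Max^t(D)\}$ of rank-one discrete valuation overrings, and this intersection has finite character; by the standard characterization of Krull domains as locally finite intersections of rank-one DVRs, $D$ is a Krull domain. I expect the third paragraph (deducing finite character) to be the main obstacle: it is not formal and rests on coupling the Noetherian spectrum of Proposition \ref{b-noeth}(1) with the height-one property of the relevant $t$-maximal ideals coming from Proposition \ref{b-noeth}(2). As a shorter alternative, one could instead observe that a PvMD is integrally closed, apply Proposition \ref{b-noeth}(3) to conclude that $D$ is a Mori domain, and then invoke the known characterization of Krull domains as the Mori PvMDs.
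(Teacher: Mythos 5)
Your main argument is correct, and it takes a genuinely different route from the paper. The paper's proof is two lines: a P$v$MD is integrally closed, so Proposition \ref{b-noeth}(3) makes $D$ a Mori domain, and then one quotes Kang's theorem that a Mori P$v$MD is Krull. You instead exhibit the Krull structure directly: the localizations $D_M$ at $t$-maximal ideals are essential valuation overrings, hence DVRs by Proposition \ref{b-noeth}(2); each $M$ therefore has height one; and the Noetherian spectrum from Proposition \ref{b-noeth}(1) forces finite character, since any $t$-maximal ideal containing $x$ must equal one of the finitely many minimal primes of $xD$. All steps check out, and your finite-character argument is essentially the same device the paper itself deploys at the end of the proof of Theorem \ref{b-1-loc} (acc on radical ideals implies finitely many minimal primes over a nonzero element). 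What each approach buys: the paper's version is shorter and slots into the later generalization (Theorem \ref{krull}, where Mori plus completely integrally closed gives Krull), at the cost of importing an external result; yours stays entirely inside the paper's toolkit, needs neither the Mori notion nor Kang's theorem, and as a by-product makes explicit that $D$ has $t$-dimension one, which the paper only records afterwards as a remark. The ``shorter alternative'' you sketch in your last sentence is precisely the paper's own proof.
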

%%%%%%%%
\begin{proof}
 As observed above, a P$v$MD is integrally closed and an integrally closed $b$-Noetherian domain is Mori (Proposition \ref{b-noeth}(3)). The conclusion follows from the fact that a Mori P$v$MD is Krull \cite[Theorem 3.2]{Kang:1989}.
\end{proof}
%%%%%%%%

In  Theorem \ref{krull} we will further extend the previous corollary.

%%%%%%% REMARK - 12
\begin{reem} \rm
{\bf (a)}   Note that a Krull domain is not necessarily $b$-Noetherian (even if it is always a P$v$MD). For instance, if $K$ is a field and $X_1, X_2, ..., X_n, ...$ is a countable family of indeterminates over $K$, then $D:=K[X_1,$ $ X_2, ..., X_n, ...; n \geq 1]$ is a Krull domain { \cite[Chapitre 7, \S1, Exercice 8]{BAC}}, but it is not $b$-Noetherian, since the ascending chain of prime ($b$-)ideals of $D$ given by
$(X_1) \subsetneq (X_1,X_2) \subsetneq (X_1, X_2, X_3)  \subsetneq \ldots$
is not stationary.  This example (which is, in particular, an integrally closed Mori domain) also shows that the conclusion of statement (3) of Proposition \ref{b-noeth} does not imply $b$-Noetherianity.

{\bf (b)} The conclusion of statement (2) of Proposition \ref{b-noeth} is also  not sufficient to have a $b$-Noetherian domain. Take, for instance, an almost Dedekind domain which is not Dedekind.

{\bf (c)}  A $2$-dimensional valuation domain has Noetherian spectrum but it is not $b$-Noetherian   (Remark \ref{remark:conducive}(b)). Therefore, the conclusion of statement (1) of Proposition \ref{b-noeth}, even in the integrally closed case, does not imply $b$-Noetherianity

{\bf (d)}  From Proposition \ref{b-noeth}(2) { (or, from Corollary \ref{pvmd})} it follows that if $D$ is a $b$-Noetherian P$v$MD then it has $t$-dimension $1$, since the localizations $D_Q$   are DVR's for each $Q \in \Max^t(D)$.
\end{reem}
%%%%%%%%%

%%%%%%% EXAMPLE - 13
\begin{exxe} \rm  {\sl Examples  of  Mori integrally closed domains with Noetherian spectrum that are not   $b$-Noetherian}.

 {\bf (a)} Take any DVR $(V, M)$,  let $\pi: V \to V/M$ be the canonical projection. { Assume that $k$ is a proper  subfield of the residue field $\boldsymbol{k}(V) :=V/M$ and that $k$ is algebraically closed in $\boldsymbol{k}(V)$. }The domain $D := \pi^{-1}(k)$ is a non Noetherian integrally closed Mori domain  \cite[Theorem 3.2 or Proposition 3.4]{B:1983} (or, \cite[Theorem 2.2]{B:2000})  and, clearly, Spec$(D)$ is Noetherian, since Spec$(D) = $ Spec$(V)$ { \cite[Corollary 3.11]{AD:1980}}.  However, this domain is not $b$-Noetherian by Proposition \ref{b-noeth}(2), since $D$ admits valuation overrings with non principal extended ideals,  because in the present situation tr.deg$_k(\boldsymbol{k}(V)) \geq 1$. For instance, let $\mathbb{C}$ be the field of complex numbers and let $X$ and $Y$ two inteterminates over $\mathbb{C}$. Take $V:= \mathbb{C}(X)[\![Y]\!]$, $M:= Y\mathbb{C}(X)[\![Y]\!]$, and $k :=\mathbb{C}$. Consider $D := \mathbb{C} + Y\mathbb{C}(X)[\![Y]\!]$.  Clearly, $D$ is an integrally closed { local} 1-dimensional Mori domain with Noetherian spectrum (homeomorphic to $\Spec(\mathbb{C}(X)[\![Y]\!])$).  Set $W := \mathbb{C}[X]_{(X)} + Y\mathbb{C}(X)[\![Y]\!]$. Then, $W$ is a 2-dimensional discrete valuation overring of $D$ with height 1 prime ideal equal to $M = MW$ (in fact, it is not hard to prove that $M$ remains a prime ideal in all the overrings of $D$ included in $V$).  However,  { the prime (nonmaximal) ideal $M $ of $W$ is not a principal ideal, since $W$ is a 2-dimensional discrete valuation domain}.

 {\bf (b)} A nonlocal example of a Mori integrally closed domain with Noetherian spectrum that is not   $b$-Noetherian can be constructed as follows.

Let $K$ be a field and $X,\ Y$ two indeterminates over $K$. Set $D:= K+XK[X,Y]$. It is easy to see that $ D = K[X, Y] \cap
 D_1$, where $D_1:=K + XK[X,Y]_{(X)} = K + XK(Y)[X]_{(X)}$.
By the same arguments used above, $D_1$ is an integrally closed  local 1-dimensional Mori domain with Noetherian spectrum that is not $b$-Noetherian.  By standard properties of the rings of fractions of pullbacks \cite[Proposition 1.9]{F:1980}, it is easy to see that $D_1$ coincides with the localization of $D$ at the maximal ideal $M:= XK[X,Y]$.  Since $D_1$ is not $b$-Noetherian, $D$ also is not $b$-Noetherian (Proposition \ref{wedge}(1)). Furthermore, since for each maximal ideal $N$ of $K[X,Y]$ such that $N \not\supseteq XK[X,Y]$, $D_{N\cap D}$ is canonically isomorphic to $K[X,Y]_N$ and the canonical continuous map $\Spec(K[X,Y]) \rightarrow \Spec(D)$  is surjective \cite[Theorem 1.4 and Corollary 1.5]{F:1980}, then it is easy to conclude that  dim$(D) = 2$,  $D$ has infinitely many maximal ideals of height 2 (different from $M$, which has height 1), and $D$ is a Mori integrally closed domain with Noetherian spectrum  (\cite[Corollary 1.5(5)]{F:1980},  \cite[Proposition 4.5 and Example 4.6(b)]{BG:87}, and also \cite[Example 2]{Lu:2000}).
\end{exxe}
%%%%%%%

We can look for other possible extensions of Proposition \ref{PbMD}(2). We call a {\it quasi-Pr\"ufer domain}   an integral domain with Pr\"ufer integral closure { \cite[Proposition 1.3]{ACE}}.

%%%%%% COROLLARY \label{quasi-dedekind} - 14
\begin{coor} \label{quasi-dedekind} Let $D$ be an integral domain.

\begin{enumerate}
\item[(1)] Assume that the integral closure $  \overline{D}$ of $D$ is a Dedekind domain. Then $D$ is a (one-dimensional) quasi-Pr\"ufer $b$-Noetherian domain.
\item[(2)] Assume that $D$ is a (one-dimensional) quasi-Pr\"ufer $b$-Noetherian domain and that $(D:\overline{D}) \neq (0)$. Then
$  \overline{D}$  is a Dedekind domain.
\end{enumerate}
\end{coor}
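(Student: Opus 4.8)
The plan is to prove the two implications separately, exploiting the established connection between the $b$-operation on $D$ and the $\overline{b}$-operation on $\overline{D}$ (as set up in Lemma \ref{b-bbar}), together with the characterization of quasi-Pr\"ufer domains.

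\medskip

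For part (1), suppose $\overline{D}$ is a Dedekind domain. First I would observe that a Dedekind domain is in particular Pr\"ufer, so by definition $D$ is quasi-Pr\"ufer. Since a Dedekind domain is Noetherian, it is trivially $\overline{b}$-Noetherian (indeed, for Pr\"ufer domains $\overline{b} = d$ by Lemma \ref{prufer}, so $\overline{b}$-Noetherianity is just ordinary Noetherianity). Applying Lemma \ref{b-bbar}(1) with the canonical embedding $\iota: D \hookrightarrow \overline{D}$, we conclude immediately that $D$ is $b$-Noetherian. The one-dimensionality of $D$ follows because $D$ and $\overline{D}$ have the same valuation overrings, and a Dedekind domain is one-dimensional, so every valuation overring is a DVR; hence $\dim D = 1$ by the valuative dimension (cf.\ \cite[Theorem 30.8]{G}).

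\medskip

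For part (2), assume $D$ is a one-dimensional quasi-Pr\"ufer $b$-Noetherian domain with $(D:\overline{D}) \neq (0)$. The key step is to invoke Lemma \ref{b-bbar}(2): since $D$ is $b$-Noetherian and the conductor $(D:\overline{D})$ is nonzero, we obtain that $\overline{D}$ is $\overline{b}$-Noetherian. Now $D$ is quasi-Pr\"ufer means precisely that $\overline{D}$ is a Pr\"ufer domain, so on $\overline{D}$ we have $\overline{b} = d$ by Lemma \ref{prufer}; thus $\overline{b}$-Noetherian simply means Noetherian. Therefore $\overline{D}$ is a Noetherian Pr\"ufer domain, and a Noetherian Pr\"ufer domain is Dedekind (as already used in the proof of Proposition \ref{PbMD}(2)).

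\medskip

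The main subtlety I anticipate is not in the ring-theoretic heart of either implication—both reduce cleanly to the transfer Lemma \ref{b-bbar} and the Pr\"ufer characterization of the $b$-operation—but rather in handling the dimension and making sure the hypotheses line up correctly. In particular, one should confirm that ``quasi-Pr\"ufer'' genuinely supplies $\overline{b} = d$ on $\overline{D}$ (which it does, since $\overline{D}$ Pr\"ufer forces $d = b_{\overline{D}}$ by Lemma \ref{prufer}), and in part (1) that the parenthetical one-dimensionality is correctly derived from the DVR valuation overrings rather than assumed. Everything else is a direct application of previously established results, so no serious obstacle is expected.
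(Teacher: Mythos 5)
Your proof is correct and follows essentially the same route as the paper: both directions reduce to the transfer Lemma \ref{b-bbar} combined with the fact that on a Pr\"ufer domain $b=d$ (so that $\overline{b}$-Noetherian means Noetherian, and Noetherian Pr\"ufer means Dedekind), which is exactly the content of Proposition \ref{PbMD}(2) that the paper invokes. Your handling of the dimension (via DVR valuation overrings, matching the paper's own use of \cite[Theorem 30.8]{G} elsewhere) is a harmless variant of the paper's remark that $\dim(D)=1$ iff $\dim(\overline{D})=1$.
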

%%%%%%%%%%%
\begin{proof} { The statements are easy consequences of Lemma \ref{b-bbar} and Proposition \ref{PbMD}. The condition on the dimension is not essential, since it follows  from the fact that $\dim(D) =1$ if and only if  $\dim(\overline{D})=1$.}
\end{proof}
%%%%%%%%%%%%%%

\medskip

 Recall that, if $D$ is an integral domain, $P$ a prime ideal of $D$, $b$ the $b$-operation of $D$ and $\iota_P$ the canonical embedding of $D$ in $D_P$, then the semistar operation $b_{\iota_P}$ on $D_P$ is defined by { $E^{b_{\iota_P}} :=E^b$, for each $E \in \Fbar(D_P)$.}  Note that $b_{\iota_P}$ coincides with $b_P$, the $b$-operation of $D_P$. Indeed,  clearly $b_{\iota_P} \leq { b_P} $.
 Conversely, since $b$ is an \texttt{ab} semistar operation of finite type on $D$, then $b_{\iota_P} $ is an \texttt{(e)ab} operation of finite type on $D_P$ \cite[Proposition 3.1((1) and (3))]{P:2005} and obviously $d_{P} \leq b_{\iota_P} $, where $d_P$ is the identity operation on $D_P$. Therefore,  $ b_P= (d_{P})_a \leq (b_{\iota_P})_a = b_{\iota_P} $.

\smallskip

%%%%%%%%%  PROPOSITION  \label{wedge} - 15
\begin{prro} \label{wedge} Let $D$ be an integral domain. For each $P \in \Spec(D)$, denote by $b_P$ the $b$-operation on the localization $D_P$ and by { $ \iota_P: D \hookrightarrow D_P$}  the canonical inclusion.
\begin{enumerate}
\item[(1)]
If $D$ is $b$-Noetherian, then $D_P$ is  { $b_P$-Noetherian} for every $P\in \Spec(D)$.
\item[(2)]  $b = \wedge (b_P)^{\iota_P}$, where $P$ varies over $\Spec (D)$
\end{enumerate}
 \end{prro}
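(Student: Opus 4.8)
The plan is to prove the two statements of Proposition \ref{wedge} in turn, with part (1) reducing to the localization behaviour already worked out in the paragraph preceding the proposition, and part (2) being essentially a statement that the $b$-operation of $D$ can be recovered as the infimum (intersection) of the localized $b$-operations.

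For part (1), I would invoke the general fact, recalled earlier in the excerpt, that if $i : D \hookrightarrow T$ is the canonical inclusion into an overring and $\star'$ is a semistar operation on $T$, then $T$ being $\star'$-Noetherian forces $D$ to be $(\star')^i$-Noetherian (Lemma \ref{b-bbar}(1) cites \cite[Lemma 3.1(2)]{EFP:2004}). Here, however, I need the converse direction: $D$ $b$-Noetherian should force $D_P$ to be $b_P$-Noetherian. The key observation, already established in the paragraph immediately before the proposition, is that $b_{\iota_P} = b_P$, i.e.\ the push-down of $b$ along the flat extension $\iota_P : D \hookrightarrow D_P$ coincides with the intrinsic $b$-operation of $D_P$. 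So I would take a nonzero ideal $J$ of $D_P$, contract (or use that $J = (J \cap D)D_P$ up to the usual correspondence) to produce a nonzero ideal $I := J \cap D$ of $D$, apply $b$-Noetherianity of $D$ to get a finitely generated $F \subseteq I$ with $F^b = I^b$, and then extend: since $\widetilde{(\cdot)}$ and extension to $D_P$ commute with the $b$-closure up to the identification $b_{\iota_P}=b_P$, I expect $FD_P$ to be finitely generated with $(FD_P)^{b_P} = (ID_P)^{b_P} = J^{b_P}$, giving $b_P$-finiteness of $J$. The cleanest route is probably to cite the localization compatibility of $b$-Noetherianity for flat (indeed localization) overrings directly from \cite[Lemma 3.1]{EFP:2004} together with the identity $b_P = b_{\iota_P}$.

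For part (2), I would unwind the definition of the wedge operation $\wedge(b_P)^{\iota_P}$: for $E \in \Fbar(D)$ it is $\bigcap\{(ED_P)^{b_P} \mid P \in \Spec(D)\}$. Using again $b_P = b_{\iota_P}$, which gives $(ED_P)^{b_P} = (ED_P)^{b_{\iota_P}}$, and the fact that the $b$-operation is computed by intersecting over valuation overrings, I would argue the two inclusions. The inclusion $E^b \subseteq (ED_P)^{b_P}$ for every $P$ is straightforward since $E \subseteq ED_P$ and $b_P$ restricted along $\iota_P$ is dominated by $b$ (or: every valuation overring of $D_P$ is a valuation overring of $D$, so $ED_P$ has more closure-defining valuations). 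The reverse inclusion is where the content lies: I must show that if $z \in K$ lies in $(ED_P)^{b_P}$ for every prime $P$, then $z \in E^b$. The natural tool is the valuation-overring description $E^b = \bigcap\{EV \mid V \text{ valuation overring of } D\}$, together with the fact that every valuation overring $V$ of $D$ dominates some localization $D_P$ (take $P$ the center of $V$ on $D$), so that $EV = (ED_P)V \supseteq $ the relevant closure, letting me conclude $z \in EV$ for all $V$.

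The main obstacle I anticipate is the reverse inclusion in part (2): matching the abstract intersection $\bigcap_P (ED_P)^{b_P}$ with the intersection over all valuation overrings $\bigcap_V EV$. The crux is the observation that the set of valuation overrings of $D$ is partitioned (or at least covered) by the valuation overrings of the various localizations $D_P$ — precisely, each valuation overring $V$ of $D$ with center $P = M_V \cap D$ is a valuation overring of $D_P$, and conversely every valuation overring of $D_P$ is a valuation overring of $D$. Granting this, $\bigcap_V EV = \bigcap_P \bigcap_{V \supseteq D_P} EV = \bigcap_P (ED_P)^{b_P}$, which is exactly the claimed equality. I would make sure to state the covering of valuation overrings by centers explicitly, citing the existence of a valuation overring centered at any given prime (as used in Lemma \ref{basic}(3) via \cite[Theorem 19.6]{G}), and then the equality $b = \wedge(b_P)^{\iota_P}$ follows by comparing the two descriptions of the $b$-closure.
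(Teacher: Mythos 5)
Your proposal is correct and follows essentially the same route as the paper: part (1) rests on the identification $b_P = b_{\iota_P}$ (established just before the proposition) to transfer $b$-finiteness of $I$ to $b_P$-finiteness of $J = ID_P$, and part (2) rests on the observation that the valuation overrings of $D$ are exactly the union over $P \in \Spec(D)$ of the valuation overrings of the localizations $D_P$. The only cosmetic difference is that the paper writes $J = ID_P$ for an arbitrary ideal $I$ of $D$ rather than specifically $I = J \cap D$, and spells out the chain $(FD_P)^{b_P} = (FD_P)^{b} = (F^bD_P)^b = (I^b D_P)^{b} = (ID_P)^{b}= J^{b_P}$ that you leave implicit.
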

%%%%%%%%%%%%%%%%%
\begin{proof} (1)  Let $J$ be an ideal of $D_P$. Then, $J = ID_P$ for some ideal $I \subseteq D$. Since $D$ is $b$-Noetherian, there exists $F\in\f(D)$, $F \subseteq I$,  such that $F^b = I^b$. Since $b_P = {b_{\iota_P}}$,  then $(FD_P)^{b_P} = (FD_P)^{b} = (F^bD_P)^b = (I^b D_P)^{b} = (ID_P)^b= (ID_P)^{b_P} =J^{b_P}$.
  Therefore, $D_P$ is $b_P$-Noetherian, since $FD_P \in \f(D_P)$.

  (2) Recall that $(b_P)^{\iota_P}$ is {  the semistar operation on $D$ defined by $E^{(b_P)^{\iota_P}} := (ED_P)^{b_P}$ for all $E \in \Fbar(D)$. }  The conclusion follows after observing that $\{V \mid V \mbox{ valuation overring of } D \} = \bigcup \{ \calbW(P) \mid P\in \Spec(D)\}$, where $\calbW(P) := \{ W \mid  W \mbox{ valuation overring of } D_P \}$.  \end{proof}
%%%%%%%%%%%%%%%%%

\medskip

A variation of Proposition \ref{wedge}(2) can be stated for more general   \texttt{eab} semistar operations.

%%%%% PROPOSITION {wedge-max} - 16
\begin{prro} \label{wedge-max} Let $\star$ be a semistar operation on an integral domain $D$. For each $Q \in \QMax^{\starf}(D)$, let $\iota_Q: D^{\start} \hookrightarrow D_Q$ be the canonical inclusion. Then $$
(\start)_a = \wedge\{(b_Q)^{\iota_Q} \mid Q  \in \QMax^{\starf}(D) \}\,.
$$
{ In particular, for $\star = d $, we have
$$
b = \wedge\{(b_M)^{\iota_M} \mid M  \in \Max(D) \}\,.
$$}
\end{prro}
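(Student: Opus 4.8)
The plan is to express both operations as intersections over families of valuation overrings and then to match those families. On the left, since $\start$ is stable of finite type we have $(\start)_a = b(\start)$, and the background facts $E^{\star_a}=E^{b(\star)}$ together with $E^{b(\star)}=\bigcap\{EV\mid V\in\calbV(\star)\}$, applied to $\start$ in place of $\star$, give for every $E\in\Fbar(D)$ that $E^{(\start)_a}=\bigcap\{EV\mid V\in\calbV(\start)\}$. On the right, writing $\calbW(Q)$ for the set of valuation overrings of $D_Q$ and recalling $E^{(b_Q)^{\iota_Q}}=(ED_Q)^{b_Q}$, I would use $D_Q\subseteq W\Rightarrow ED_QW=EW$ to rewrite $(ED_Q)^{b_Q}=\bigcap\{EW\mid W\in\calbW(Q)\}$; hence the right-hand operation sends $E$ to $\bigcap_Q\bigcap_{W\in\calbW(Q)}EW$, i.e. the intersection of all $EW$ with $W\in\bigcup_Q\calbW(Q)$. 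Thus the whole statement reduces to the set equality $\calbV(\start)=\bigcup\{\calbW(Q)\mid Q\in\QMax^{\starf}(D)\}$.

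The inclusion $\supseteq$ is immediate: if $D_Q\subseteq W$, then for every $F\in\f(D)$ one has $F^{\start}=\bigcap_{Q'}FD_{Q'}\subseteq FD_Q\subseteq FW$, so $W$ is a $\start$-valuation overring. For the reverse inclusion, take $V\in\calbV(\start)$ with $V\neq K$ (the case $V=K$ is harmless, since it contributes only $K$ to every intersection) and let $P:=M_V\cap D$ be its center. Because $\start$ is of finite type, $P^{\start}=\bigcup\{G^{\start}\mid G\in\f(D),\,G\subseteq P\}$, and each $G^{\start}\subseteq GV\subseteq M_V$, whence $P^{\start}\subseteq M_V$ and therefore $P^{\start}\cap D=P$. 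The key step is then to exhibit $Q\in\QMax^{\starf}(D)$ with $P\subseteq Q$: by the very definition of $\start$ we have $P^{\start}=\bigcap\{PD_Q\mid Q\in\QMax^{\starf}(D)\}$, so if $P$ were contained in no such $Q$, every $PD_Q$ would equal $D_Q$, forcing $P^{\start}=\bigcap_Q D_Q=D^{\start}\ni 1$ and contradicting $P^{\start}\cap D=P\subsetneq D$. Hence $P\subseteq Q$ for some $Q$, i.e. $M_V\cap D\subseteq Q$, which is equivalent to $D_Q\subseteq V$, so $V\in\calbW(Q)$.

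Combining the two inclusions, $E^{(\start)_a}=\bigcap_{V\in\calbV(\start)}EV=\bigcap_Q(ED_Q)^{b_Q}=E^{\wedge\{(b_Q)^{\iota_Q}\}}$ for all $E\in\Fbar(D)$, which is the asserted identity. The particular case $\star=d$ then follows by specialization: $d$ is already of finite type and $\QMax^{d_f}(D)=\Max(D)$, while $E=\bigcap_{M\in\Max(D)}ED_M$ for every $E\in\Fbar(D)$ shows $\widetilde{d}=d$; thus $(\widetilde{d})_a=d_a=b$ by Lemma \ref{basic}(1), and the formula reads $b=\wedge\{(b_M)^{\iota_M}\mid M\in\Max(D)\}$. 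I expect the \emph{main obstacle} to be precisely the inclusion $\calbV(\start)\subseteq\bigcup_Q\calbW(Q)$, i.e. showing that the center of a $\start$-valuation overring lies beneath some quasi-$\starf$-maximal ideal; the device that settles this cleanly is to read off the containment $P\subseteq Q$ directly from the defining intersection $P^{\start}=\bigcap_Q PD_Q$, rather than invoking the structure theory of quasi-$\starf$-maximal ideals.
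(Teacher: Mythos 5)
Your proposal is correct and follows essentially the same route as the paper: both reduce the identity to the representation $E^{(\start)_a}=\bigcap\{EV\mid V\in\calbV(\start)\}$ together with the set equality $\calbV(\start)=\bigcup\{\calbW(Q)\mid Q\in\QMax^{\starf}(D)\}$, where $\calbW(Q)$ denotes the valuation overrings of $D_Q$. The only difference is that the paper simply cites Fontana--Loper (2003, Theorem 3.9) for this equality of families of valuation overrings, whereas you prove it directly, and your argument (centering $V\neq K$ at $P=M_V\cap D$, showing $P^{\start}\cap D=P$, and reading $P\subseteq Q$ off the defining intersection $P^{\start}=\bigcap\{PD_Q\mid Q\in\QMax^{\starf}(D)\}$) is sound.
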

%%%%%%%%%%%%%%
\begin{proof} Note that $W$ is a valuation overring of $D_Q$ for some $Q \in \QMax^{\starf}(D)$ if and only if $W$ is a $\start$-valuation overring of $D$ \cite[Theorem 3.9]{FL:2003}.

The second part  follows from the first part and from the fact that $d = \widetilde{d} = d_{_f}$, $d_a= b$ and $\QMax^{d}(D) = \Max(D)$.
\end{proof}
%%%%%%%%%%%%%%

%%%%%%%%%%  REMARK  - 17
%\begin{reem} \label{wedge-finite} \rm   We state now a ``local-global'' version of the previous result in the setting of the semistar operations.  A similar result, for star operations but with a more general formulation, will appear in \cite {FPT}.  The proof is omitted here since it is a straighforward adaptation of the classical proof that, in a domain with the finite character on maximal ideals, a locally finitely generated ideal is finitely generated  see for instance \cite[Lemma 37.3]{G}.
%
%
%\noindent {\it
%Let $D$ be an integral domain.  Assume that  $D$ has the  finite character on maximal ideals (i.e., the intersection { $D= \bigcap \{ D_{M}\mid M \in \Max(D) \}$}  is locally finite \cite[page 76]{Ka:1970}). For each $M$, let $\star_M$ be a given semistar operation of finite type on $D_M$ { and let $\iota_M: D \hookrightarrow D_M$ be the canonical inclusion. Set ${\boldsymbol \star} := \wedge \{(\star_M)^{\iota_M} \mid M \in \Max(D) \}$.} If $I$ is an ideal of $D$ such that $ID_M$ is $\star_M$-finite for each $M$, then $I$ is ${\boldsymbol \star}$-finite.
%}
%%\end{prro}
%\end{reem}
%%%%%%%%%%%

\medskip

The next result generalizes  \cite[Proposition 1.7]{FPT}.

%%%%%%% PROPOSITION {wedge-lambda-finite} - 18
\begin{prro} \label{wedge-lambda-finite}
Let $D$ be an integral domain and $\boldsymbol{\mathcal T}:=
 \{T_\lambda \mid \lambda \in \Lambda\}$ be a family of overrings of $D$.  Assume that  $\boldsymbol{\mathcal T}$ has finite character
  (i.e.,  each nonzero element of $D$  is non-unit in finitely many $T_{\lambda}$'s). For each $\lambda$, let $\star_{\lambda}$ be a given semistar operation of finite type on $T_\lambda$   and let $\iota_{\lambda}: D \hookrightarrow T_\lambda$ be the canonical inclusion. Set ${\boldsymbol \star}
  := \wedge \{(\star_\lambda)^{\iota_\lambda} \mid \lambda \in \Lambda \}$.
  If $I$ is an ideal of $D$ such that $IT_\lambda$ is $\star_\lambda$-finite for each $\lambda$, then $I$ is ${\boldsymbol \star}$-finite.
\end{prro}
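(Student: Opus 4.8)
The plan is to use the finite character of $\boldsymbol{\mathcal T}$ to reduce the construction of a single finitely generated ideal to a condition involving only finitely many of the $T_\lambda$'s. We may assume $I \neq (0)$ and fix a nonzero element $a \in I$. By finite character the set $\Lambda_0 := \{\lambda \in \Lambda \mid a \mbox{ is a non-unit in } T_\lambda\}$ is finite, and for every $\lambda \notin \Lambda_0$ we have $aT_\lambda = T_\lambda$, hence $IT_\lambda = T_\lambda = (a)T_\lambda$. This single element thus already realizes $IT_\lambda$ on all but finitely many indices, and only the finitely many $\lambda \in \Lambda_0$ need separate attention.

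For each $\lambda \in \Lambda_0$ I would descend the $\star_\lambda$-finiteness of $IT_\lambda$ to a finitely generated $D$-subideal of $I$. Since $IT_\lambda$ is $\star_\lambda$-finite and $\star_\lambda = (\star_\lambda)_{_f}$, there is $F_\lambda \in \f(T_\lambda)$ with $F_\lambda \subseteq IT_\lambda$ and $F_\lambda^{\star_\lambda} = (IT_\lambda)^{\star_\lambda}$. Each generator of $F_\lambda$ lies in $IT_\lambda$, hence is a finite $T_\lambda$-combination of elements of $I$; collecting the finitely many elements of $I$ that occur produces $G_\lambda \in \f(D)$ with $G_\lambda \subseteq I$ and $F_\lambda \subseteq G_\lambda T_\lambda \subseteq IT_\lambda$. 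Applying $\star_\lambda$ forces $(G_\lambda T_\lambda)^{\star_\lambda} = (IT_\lambda)^{\star_\lambda}$.

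I would then set
$$
F := (a) + \sum_{\lambda \in \Lambda_0} G_\lambda \,,
$$
a finite sum of finitely generated ideals, so $F \in \f(D)$ and $F \subseteq I$. For $\mu \in \Lambda_0$ we have $G_\mu \subseteq F \subseteq I$, so $(FT_\mu)^{\star_\mu} = (IT_\mu)^{\star_\mu}$; for $\mu \notin \Lambda_0$ the ideal $FT_\mu$ contains the unit $a$, whence $FT_\mu = T_\mu = IT_\mu$ and again $(FT_\mu)^{\star_\mu} = (IT_\mu)^{\star_\mu}$. Intersecting over all $\lambda$ and using the definition of ${\boldsymbol \star}$ gives $F^{\boldsymbol \star} = I^{\boldsymbol \star}$.

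Finally, since $F$ is finitely generated, $F^{{\boldsymbol \star}_f} = F^{\boldsymbol \star}$, while ${\boldsymbol \star}_f \leq {\boldsymbol \star}$ and $F \subseteq I$ yield $I^{{\boldsymbol \star}_f} \subseteq I^{\boldsymbol \star} = F^{\boldsymbol \star} = F^{{\boldsymbol \star}_f} \subseteq I^{{\boldsymbol \star}_f}$, so $F^{{\boldsymbol \star}_f} = I^{{\boldsymbol \star}_f}$ and $I$ is ${\boldsymbol \star}$-finite. The conceptual key is the unit trick of the first paragraph, which disposes of cofinitely many indices at once; the one technical point requiring care is the descent in the second paragraph, namely passing from a finitely generated $T_\lambda$-ideal realizing the closure to a finitely generated $D$-subideal of $I$, which works only because $IT_\lambda$ is generated over $T_\lambda$ by elements of $I$. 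The rest is bookkeeping with finite character.
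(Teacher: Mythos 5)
Your proof is correct and follows essentially the same route as the paper's: descend each finitely generated $T_\lambda$-ideal realizing $(IT_\lambda)^{\star_\lambda}$ to a finitely generated $D$-subideal of $I$ by rewriting its generators as $T_\lambda$-combinations of elements of $I$, then sum over the finitely many relevant indices. The only difference is that you explicitly adjoin the principal ideal $(a)$ to handle the cofinitely many $\lambda$ with $aT_\lambda = T_\lambda$ and you spell out the passage to ${\boldsymbol \star}_{_{\! f}}$ at the end---details the paper compresses into ``by a routine argument''---so your write-up is, if anything, the more complete one.
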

%%%%%%%%
\begin{proof} Let $\lambda_1, \lambda_2, ... , \lambda_r$ be the finite set of indexes $\lambda \in \Lambda$  such that $IT_{\lambda} \neq T_{\lambda}$.  Let $G_k \in \f(T_{\lambda_k})$, $G_k \subseteq IT_{\lambda_k}$,  be such that $(G_k)^{\star_{\lambda_k}} = (IT_{\lambda_k})^{\star_{\lambda_k}}$, for $1 \leq k \leq r$.   Now, every generator  $g_i^{(k)}$ of  $G_k \ (\subseteq IT_{\lambda_k})$, for $ 1 \leq i \leq n_k$, can be written as a finite linear combination of elements in $I$ and coefficients in $T_{\lambda_k}$.  Therefore, using all these finite elements of $I$, varying $g_i^{(k)}$ for  all $i$, we can construct $F_k \in \f(D)$, $F_k \subseteq I$, $F_k T_{\lambda_k} = G_k$ for all $k$.
Set $F : = F_1+F_2+ ...+ F_r \in \f(D)$. Then, by a routine argument, it can be shown that $F^{\boldsymbol \star}=I^{\boldsymbol \star}$, with $F \subseteq I$.
\end{proof}
%%%%%%%%%%%%%%

The following corollary is a straightforward consequence of Propositions \ref{wedge}, \ref{wedge-max} and \ref{wedge-lambda-finite}.
\ec

%%%%%% COROLLARY {loc-b} -19
\begin{coor} \label{loc-b}
Let $D$ be an integral domain with the finite character on maximal ideals. The following are equivalent.
\begin{itemize}
\item[(i)] $D$ is $b$-Noetherian.
\item[(ii)] $D_P$ is $b_P$-Noetherian for each prime ideal $P$.
\item[(iii)] $D_M$ is $b_M$-Noetherian for each maximal ideal $M$. \end{itemize}
\end{coor}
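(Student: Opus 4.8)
The plan is to establish the cycle (i) $\Rightarrow$ (ii) $\Rightarrow$ (iii) $\Rightarrow$ (i), with the finite character hypothesis needed only for the last implication. The implication (i) $\Rightarrow$ (ii) is exactly Proposition \ref{wedge}(1), which asserts that $b$-Noetherianity of $D$ forces $b_P$-Noetherianity of every localization $D_P$. The implication (ii) $\Rightarrow$ (iii) is immediate, since every maximal ideal is prime, so (iii) is merely the restriction of (ii) to $\Max(D)$.

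The substance is in (iii) $\Rightarrow$ (i), and here I would apply Proposition \ref{wedge-lambda-finite} to the family $\boldsymbol{\mathcal T} := \{D_M \mid M \in \Max(D)\}$ of overrings of $D$, taking $\star_M := b_M$ for each $M$. First I would verify that the data fit the hypotheses: each $b_M$ is a semistar operation of finite type on $D_M$, since it is a $b$-operation and hence of finite type by Lemma \ref{basic}(2); and the family $\{D_M\}$ has finite character precisely because $D$ has finite character on maximal ideals (a nonzero element of $D$ is a non-unit in $D_M$ exactly when it lies in $M$, so it fails to be a unit in only finitely many of the $D_M$). Next, by the ``in particular'' part of Proposition \ref{wedge-max}, the associated semistar operation $\boldsymbol\star = \wedge\{(b_M)^{\iota_M} \mid M \in \Max(D)\}$ is exactly $b$.

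With these identifications in place the conclusion is automatic. Assuming (iii), each $D_M$ is $b_M$-Noetherian, so for every nonzero ideal $I$ of $D$ the extension $I D_M$ is $b_M$-finite for all $M$. Proposition \ref{wedge-lambda-finite} then yields that $I$ is $b$-finite. Since $I$ is arbitrary and $b$ is of finite type, every nonzero ideal of $D$ is $b$-finite, which (by the characterization of $\star$-Noetherian domains via $\starf$-finiteness) is exactly the assertion that $D$ is $b$-Noetherian. The argument has no genuine obstacle; the only points requiring a moment's care are the finite-type property of $b_M$ and the identification $\boldsymbol\star = b$, after which everything reduces to an application of Proposition \ref{wedge-lambda-finite} — which is why the statement is recorded as a corollary.
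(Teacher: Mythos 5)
Your proof is correct and is exactly the argument the paper intends: the paper gives no explicit proof, stating only that the corollary is a straightforward consequence of Propositions \ref{wedge}, \ref{wedge-max} and \ref{wedge-lambda-finite}, and you have assembled precisely those three ingredients in the natural way (Proposition \ref{wedge}(1) for (i)$\Rightarrow$(ii), triviality for (ii)$\Rightarrow$(iii), and Proposition \ref{wedge-lambda-finite} combined with the identification $b = \wedge\{(b_M)^{\iota_M}\}$ from Proposition \ref{wedge-max} for (iii)$\Rightarrow$(i)). No gaps.
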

\ec

\medskip

In relation with Corollary \ref{quasi-dedekind}, note that if $\Kr(D, b)$ is Noetherian (or, equivalently, Dedekind) then $D$ is $b$-Noetherian. As a matter of fact,
 if $\iota: D \hookrightarrow \Dbar$ is the canonical inclusion, { we have already observed in the proof of Lemma \ref{b-bbar} that $b_\iota$ coincides with $\overline{b} :=b_{\overline{D}}$ (the $b$-operation  on $\overline{D}$).
 Therefore,} $\Kr(D, b) =\Kr(\Dbar, \overline{b})$   \cite[Proposition 4.1(2)]{FL:2003}}; hence,  $\Kr(D, b) $ is Dedekind is equivalent to $\Kr(\Dbar, \overline{b})$ is Dekekind and this happens if and only if $\Dbar$ is Dedekind  \cite[Proposition 38.7]{G}. { Note also that, in this case,} $b =  \wedge_{\{\Dbar\}}$ since, for every $E\in \Fbar(D)$, $E^b = \bigcap \{E\Dbar_N \mid N \in \Max(\Dbar) \} = E\Dbar$.   More generally, the fact that $b = {\wedge_{\{\Dbar\}}}$ characterizes the quasi-Pr\"ufer domains \cite[Remark 2.7]{P:2009}.

 Conversely,  if $D$ is ($b$-)Noetherian, not necessarily $\Kr(D,b)$ is Noetherian. For instance,  take a Noetherian 2-dimensional domain $D$; in this case 2 = dim$(D) = $ dim$_v(D) = $ dim$(\Kr(D,b))$ { \cite[Corollary 30.10 and Proposition 32.16]{G}}. Therefore, the B\'ezout domain  $\Kr(D, b)$ is not Noetherian (since it is not a Dedekind domain).

 \bigskip

 Let $\star_a$ be the \texttt{eab} semistar operation  of finite type canonically associated to a given semistar operation  $\star$ defined on an integral domain $D$. Recall that a domain $D$
  is said {\it $\star$-integrally closed} if $D = D^{\star_a}= \bigcup \{(F^\star : F^\star) \mid  F \in f(D)  \}= \bigcap \{V \mid V $  is a  $\star$-valuation overring of $ D \}$.

In the particular case that $\star =d$,  it is clear that the valuation overrings of $D$  coincide with the $b$-valuation overrings of $D$,
and therefore we reobtain that $D^{d_a} = D^b =  \bigcup \{(F : F) \mid  F \in f(D) \} = \overline{D} = \bigcap \{V \mid V \mbox{ is a valuation overring of } D \} $ { \cite[Theorem 19.8 and Proposition 34.7]{G}}.
 From the previous observations it follows that $D$ is $b$-integrally closed if and only if it is integrally closed and a $\star$-integrally closed domain is always   ($b$-)integrally closed.

\bigskip

%%%%%% THEOREM {b-1-loc} - 20
\begin{thee} \label{b-1-loc}
{ Let $D$ be an integral domain.
Then, $D$ is a 1-dimensional,  integrally closed, $b$-Noetherian domain if and only if $D$ is a Dedekind domain.}
\end{thee}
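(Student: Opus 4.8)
The statement is an equivalence, and the plan is to treat the two implications separately, putting essentially all the work into the forward direction. For the easy direction, suppose $D$ is a Dedekind domain. Then $D$ is integrally closed, one-dimensional, and Noetherian; Noetherianity is exactly $d$-Noetherianity, and since $d \leq b$, the monotonicity remark recorded above (namely, that $\star_1 \leq \star_2$ propagates Noetherianity from $\star_1$ to $\star_2$) shows at once that $D$ is $b$-Noetherian.

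For the converse I would reduce to a local statement. Since $\dim D = 1$, every maximal ideal $M$ of $D$ has height $1$, and each localization $D_M$ is one-dimensional, local, integrally closed (localization preserves integral closedness), and $b_M$-Noetherian by Proposition \ref{wedge}(1). If I can show that such a ring is a DVR, then $D$ is locally a valuation domain at every maximal ideal, hence a Pr\"ufer domain; being moreover $b$-Noetherian, Proposition \ref{PbMD}(2) then forces $D$ to be a Dedekind domain. Thus the whole theorem rests on the following local claim, which is the heart of the argument: a one-dimensional local integrally closed $b$-Noetherian domain $(R,\mathfrak m)$ is a DVR.

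The crux of this local claim — and the one point where $b$-Noetherianity genuinely does work beyond the Mori property — is the identity $(\mathfrak m : \mathfrak m) = R$. Indeed, by Proposition \ref{b-noeth}(2), for every valuation overring $V$ of $R$ the extended ideal $\mathfrak m V$ is principal, say $\mathfrak m V = \theta V$ with $\theta \neq 0$; hence any $s \in K$ with $s\mathfrak m \subseteq \mathfrak m$ satisfies $s\theta V \subseteq \theta V$, that is $sV \subseteq V$, so $s \in V$. Since $R$ is integrally closed it is the intersection of its valuation overrings, and therefore $s \in R$, giving $(\mathfrak m : \mathfrak m) = R$. On the other hand $R$ is a Mori domain by Proposition \ref{b-noeth}(3), and $\mathfrak m$, being the unique maximal ideal of a one-dimensional local domain (so a $t$-maximal ideal, minimal over a principal ideal), is divisorial; hence $\mathfrak m^{-1} \supsetneq R$. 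Were $\mathfrak m$ not invertible we would have $\mathfrak m\,\mathfrak m^{-1} = \mathfrak m$, whence $\mathfrak m^{-1} \subseteq (\mathfrak m : \mathfrak m) = R$, contradicting $\mathfrak m^{-1} \supsetneq R$. Therefore $\mathfrak m\,\mathfrak m^{-1} = R$, so $\mathfrak m$ is invertible and, being invertible in a local ring, principal: $\mathfrak m = (\pi)$.

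It then remains to pass from a principal maximal ideal to the DVR conclusion, which is routine: setting $P := \bigcap_n (\pi^n)$, one checks directly that $P = \pi P$ and that $P$ is prime, and since the only primes are $0 \subsetneq \mathfrak m$ while $(\pi) \subseteq (\pi^2)$ would force $\pi$ to be a unit, one gets $P = 0$; consequently every nonzero ideal of $R$ equals some $(\pi^n)$ and $R$ is a DVR. The step I expect to be the real obstacle is precisely the identity $(\mathfrak m : \mathfrak m) = R$: this is exactly what the $b$-Noetherian hypothesis buys, ruling out the non-integral overrings $(\mathfrak m : \mathfrak m) \supsetneq R$ that do arise for the one-dimensional integrally closed Mori domains which fail to be $b$-Noetherian (as in the examples discussed above); everything else is standard multiplicative ideal theory.
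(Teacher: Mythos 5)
Your proof is correct, and at its heart --- the local claim --- it takes a genuinely different route from the paper. The paper first establishes a separate lemma of independent interest (Lemma~\ref{lemma}: $(\sqrt{I})^m \subseteq I^b \subseteq \sqrt{I}$ for some $m$), uses it to get $M^m \subseteq (t)$ for a nonzero $t \in M$, and then explicitly produces an element $\beta = t/a$ with $a \in M^{n-1}\setminus (t)$, showing $\beta^{-1}M = D$ by ruling out $\beta^{-1}M \subseteq M$ via the $b$-integral closure, and finally invokes Cohen's theorem. You instead combine two consequences of Proposition~\ref{b-noeth}: part (2) yields $(\mathfrak{m}:\mathfrak{m}) = R$ (since $\mathfrak{m}V$ is principal in every valuation overring and the integrally closed $R$ is the intersection of those overrings), while part (3) yields the Mori property, whence $\mathfrak{m}$ --- a height-one prime, hence a $t$-maximal ideal, hence divisorial because $t=v$ in Mori domains --- satisfies $\mathfrak{m}^{-1} \supsetneq R$; the two facts together force $\mathfrak{m}\mathfrak{m}^{-1} = R$. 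This trades the paper's self-contained power-of-the-radical computation for standard facts from Mori/$t$-ideal theory (primes minimal over principal ideals are $t$-ideals; $t$-maximal ideals of Mori domains are divisorial), and it bypasses Lemma~\ref{lemma} entirely, so you lose that byproduct but gain a shorter argument. Note that the underlying mechanism at the crux is the same in both proofs: $b$-Noetherianity (or its consequence on extensions to valuation overrings) pins the multiplier ring of $\mathfrak{m}$ inside the integral closure. Your global conclusion is also slightly cleaner: from ``almost Dedekind'' you pass to Pr\"ufer and quote Proposition~\ref{PbMD}(2), whereas the paper goes through the Noetherian spectrum, Kaplansky's finiteness of minimal primes, and Gilmer's Theorem 37.2; both are valid, but yours stays entirely within results already proved in the paper.
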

%%%%%%%%

The proof of the previous result is based on the following fact of independent interest.

%%%%%% LEMMA \label{lemma}  - 21
\begin{leem}\label{lemma} Let $D$ be a $b$-Noetherian integrally closed domain and $I$ a nonzero ideal of $D$. Then there exists $m \geq 1$ such that $ (\sqrt{I})^m \subseteq I^b \subseteq \sqrt{I}$. In particular, $\sqrt{I^b} = \sqrt{I}$.
\end{leem}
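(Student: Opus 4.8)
The plan is to prove the two inclusions separately, the second being the substantive one. I would begin with the easy inclusion $I^b\subseteq\sqrt I$, which rests on the observation that in an integrally closed domain every radical ideal is $b$-closed. Indeed, since $\sqrt I\subseteq D$ and $D=D^b=\Dbar$, every extension $(\sqrt I)V$ lies in $V$, so $(\sqrt I)^b=\bigcap\{(\sqrt I)V\}\subseteq\bigcap\{V\}=\Dbar=D$; combining this with Lemma \ref{basic}(4) gives $(\sqrt I)^b=(\sqrt I)^b\cap D=\sqrt I$. As $I\subseteq\sqrt I$, monotonicity of $b$ then yields $I^b\subseteq(\sqrt I)^b=\sqrt I$.

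For the harder inclusion $(\sqrt I)^m\subseteq I^b$, the tool I would isolate first is the submultiplicativity of the $b$-operation: for all $E,F\in\Fbar(D)$ one has $E^bF^b\subseteq(EF)^b$. This is immediate from the valuation description of $b$, since for each valuation overring $V$ we have $E^b\subseteq EV$ and $F^b\subseteq FV$, whence $E^bF^b\subseteq(EV)(FV)=(EF)V$; intersecting over all $V\in\calbV(d)$ gives the claim. Iterating, $(E^b)^n\subseteq(E^n)^b$ for every $n\geq 1$.

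Now I would invoke $b$-Noetherianity. Because $\sqrt I$ is $b$-closed by the first paragraph, $b$-finiteness furnishes a finitely generated ideal $G=(g_1,\dots,g_s)\subseteq\sqrt I$ with $G^b=(\sqrt I)^b=\sqrt I$. Each $g_j\in\sqrt I$ satisfies $g_j^{e_j}\in I$ for some $e_j\geq 1$, and a routine monomial/pigeonhole argument shows $G^N\subseteq I$ for $N:=1+\sum_{j}(e_j-1)$, since in any degree-$N$ monomial $g_1^{c_1}\cdots g_s^{c_s}$ some exponent must satisfy $c_j\geq e_j$. Combining with submultiplicativity and monotonicity,
$$
(\sqrt I)^N=(G^b)^N\subseteq(G^N)^b\subseteq I^b,
$$
so $m=N$ works. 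Finally, the ``in particular'' assertion is obtained by taking radicals throughout $(\sqrt I)^m\subseteq I^b\subseteq\sqrt I$, which forces $\sqrt{I^b}=\sqrt I$.

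The main obstacle is producing a single exponent $m$ that works \emph{uniformly}, i.e.\ an inclusion into $I^b=\bigcap\{IV\}$ valid simultaneously at all valuation overrings rather than one $V$ at a time. This is exactly what the two inputs resolve: $b$-finiteness of $\sqrt I$ reduces the radical to finitely many generators $g_j$ (supplying the finite exponent $N$), while the submultiplicativity $E^bF^b\subseteq(EF)^b$ lets the power $N$ pass through the $b$-operation so that $(\sqrt I)^N=(G^b)^N$ lands inside $(G^N)^b\subseteq I^b$. Everything else is bookkeeping.
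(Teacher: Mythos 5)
Your proof is correct and follows essentially the same route as the paper for the substantive inclusion: both use $b$-finiteness of the $b$-closed radical ideal $\sqrt{I}$ to extract finitely many generators, bound the exponent by $1+\sum(e_j-1)$, and push the power through the $b$-operation via $(G^b)^N\subseteq (G^N)^b$. The only (harmless) divergence is in the easy inclusion $I^b\subseteq\sqrt I$, where the paper uses the equational characterization of integral dependence on an ideal while you use $b$-closedness of radical ideals plus monotonicity; both are valid.
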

%%%%%%%%%
\begin{proof} We already observed that, in a $b$-Noetherian domain, radical ideals are quasi-$b$-ideals (Lemma \ref{basic}(5)), and so, in $b$-Noetherian integrally closed domain, radical ideals are  $b$-ideals. Therefore, $\sqrt{I} = (a_1, a_2, \ldots, a_t)^b$, for some $a_k \in \sqrt{I}$. Moreover, for each $k$, $1 \leq k \leq t$,  there exists $n_k$ such that $a_k^{n_k} \in I$. Now,  if we take $m := 1 + \sum_{k=1}^t (n_k - 1)$, then  $(\sqrt{I})^{m} \subseteq ((\sqrt{I})^m)^b = (((a_1,a_2, \ldots, a_t)^b)^m)^b = ((a_1,a_2, \ldots, a_t)^m)^b \subseteq I^b$.  Furthermore, if $x \in I^b$ then, for some $n\geq 1$,
$x^n = \sum_{k=1}^{n} a_k x^{n-k}$, with $a_k \in I^k$ and $x^{n-k }\in D^b=\Dbar =D$ and so $x^n \in I\Dbar =I$.

The last statement follows by observing that $\sqrt{(\sqrt{I})^m}= \sqrt{I}$
\end{proof}
%%%%%%%%%%%%%%%%%%%%

%%%%%%%%%%%%%%%%%%%%%%%%%%
\noindent{\it Proof of Theorem \ref{b-1-loc}.}

%%%%%%%%%%%%%%%%%%%
\noindent  {\bf Claim 1.} {\sl Let $(D, M)$ be an integrally closed, one dimensional,  $b$-Noetherian local domain. Then, $M$ is a principal ideal (i.e., $D$ is a DVR).}
%%%%%%%%%%%%%%

Let  $0 \neq t \in M$. Since $D$ is local one-dimensional, $M = \sqrt{(t)}$. By Lemma \ref{lemma}, there exists an integer $m\geq 1$ such that $(M^m)^b \subseteq (t)^b \subseteq M^b = M$. Since $D$ is integrally closed, { all nonzero principal ideals are integrally closed, in particular  $(t^r)^b = (t^r)$ for any $r\geq 1$.} Thus, $M^m \subseteq (t) \subseteq M$. If $(t) = M$, $M$ is principal and we have done. So, assume that $(t) \subsetneq M$. Since $M^m \subseteq  (t)$, there exists $1 \leq n \leq m$ such that $M^n \subseteq (t)$ but $M^{n-1} \not \subseteq (t)$. Let $a \in M^{n-1} \setminus (t)$, and set $\beta := t/a \in K$.
Note that $\beta^{-1} = a/t \not \in D$, otherwise $a \in tD$. In particular, $\beta^{-1}$ is not integral over $D$. Since $D$ is $b$-Noetherian, there exists a finitely generated ideal  $F$ of $D$ such that $F^b = M$.   If $\beta^{-1}M \subseteq M$, we have that $\beta^{-1} \in (F^b : F^b)$ and so it is in the $b$-integral closure of $D$. { On the other hand, we already recalled above that the $b$-integral closure coincides with the integral closure.}  So, $\beta^{-1}M \subseteq M$ implies that $\beta^{-1} \in \overline D = D$, a contradiction. Thus, $\beta^{-1}M \not \subseteq M$.
{ We claim that} $\beta^{-1}M \subseteq D$. Indeed, $\beta^{-1}M = (a/t)M \subseteq D$, since $aM \subseteq M^{n-1}M = M^n \subseteq (t)$.
Since $\beta^{-1}M \not \subseteq M$ and $D$ is local, we have $\beta^{-1} M = D$ and so $M = \beta D $ is principal.  Therefore,
$D$ is integrally closed, one dimensional and its unique nonzero prime ideal is principal, { hence, by Cohen's Theorem, $D$ is Noetherian \cite[Theorem 3.6]{G}. i.e., a DVR.}

%%%%%%%%%%%%%%
\noindent  {\bf Claim 2.} {\sl Let $D$ be an integrally closed, one dimensional,  $b$-Noetherian domain. Then, $D$ is an almost Dedekind domain.}
%%%%%%%%%%%%%%

 Recall that an {\it almost Dedekind domain} is an integral domain such that $D_M$ is a DVR for all maximal ideals $M$ of $D$.  Therefore, this claim is a consequence of  Proposition  \ref{wedge}(1) and Claim 1.

We now conclude that if $D$ is an integrally closed, one dimensional,  $b$-Noetherian domain, then $D$  is Dedekind by Claim 2 and \cite[Theorem 37.2]{G}, since a $b$-Noetherian domain has the \texttt{acc} on radical ideals (Proposition \ref{b-noeth}(1)), and so every nonzero element is contained in a finite number of minimal primes \cite[Theorem 88]{Ka:1970}, which are also maximal ideals in the present situation.
\hfill $\Box$
%%%%%%%%%%%%%%%%%%%%%%

\medskip

%%%%%%%%%%%% REMARK  -  22
\begin{reem} \rm Note that, from the properties proved above, \it the following are equivalent:
\begin{itemize}
\item[(i)]  \it $D$ is integrally closed 1-dimensional $b_M$-Noetherian for each $M \in \Max(D)$.
\item[(ii)] \it  $D$ is an almost Dedekind domain.
\end{itemize} \rm
Therefore, a (non-semilocal) integrally closed 1-dimensional domain $D$ which is $b_M$-Noetherian for each $M \in \Max(D)$ is not necessarily $b$-Noetherian, because in this situation $b$-Noetherian coincides with Noetherian (Corollary \ref{loc-b} and Theorem \ref{b-1-loc}).
\end{reem}
%%%%%%%%%%%%%%%%%%

\medskip

 By the previous results, the localizations of an integrally closed $b$-Noetherian domain at the primes of height $1$ are DVR's.  So, if one could prove that an integrally closed $b$-Noetherian domain $D$ is the intersection of its localizations at the primes of height $1$, then  $D$ would be a Krull domain.  { We obtain this property by a simple argument, avoiding the techniques used in the proof of Theorem 3.12 of Fossum's book \cite{Fo:1973}.
 Note that next theorem generalizes Corollary \ref{pvmd} and Theorem \ref{b-1-loc}.}

%%%%%% THEOREM {krull}  -  23
\begin{thee}  \label{krull} Let $D$ be a $b$-Noetherian domain. The following are equivalent.
\begin{enumerate}
\item[(i)] $D$ is integrally closed.
\item [(ii)]  $D$ is completely integrally closed.
\item[(iii)] $D$ is a $v$-domain.
\item[(iv)] $D$ is a P$v$MD.
\item[(v)] $D$ is a  Krull domain.
\end{enumerate}
\end{thee}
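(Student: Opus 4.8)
The plan is to prove the equivalences through a cycle in which the only substantial implication is (i)$\Rightarrow$(v); every other link is a standard fact of multiplicative ideal theory that needs no $b$-Noetherian hypothesis. Concretely, I would close the loop with (i)$\Rightarrow$(v)$\Rightarrow$(ii)$\Rightarrow$(i) together with (v)$\Rightarrow$(iv)$\Rightarrow$(iii)$\Rightarrow$(i): a Krull domain is completely integrally closed and a P$v$MD, so (v) gives (ii) and (iv); a P$v$MD is a $v$-domain because $t$-invertibility of a finitely generated ideal forces $v$-invertibility (apply $v$ to $(FF^{-1})^{t}=D$); and both a $v$-domain and a completely integrally closed domain are integrally closed. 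Thus everything reduces to the single statement that an \emph{integrally closed} $b$-Noetherian domain is Krull.

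For that implication I would first assemble the pieces already proved. By Proposition \ref{b-noeth}(3) such a $D$ is Mori; by Proposition \ref{b-noeth}(1) it has Noetherian spectrum, so each nonzero element lies in only finitely many minimal, hence height-one, primes (the finite-character axiom); and for each height-one prime $P$ the ring $D_P$ is integrally closed, $b_P$-Noetherian by Proposition \ref{wedge}(1), local and one-dimensional, hence a DVR by Claim 1 of the proof of Theorem \ref{b-1-loc}. By the classical characterization of Krull domains it then suffices to establish the intersection formula $D=\bigcap\{D_P\mid \operatorname{ht}P=1\}$.

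This intersection formula is the core of the argument and the step I expect to be the main obstacle. I would argue by contradiction: take $x\in\bigcap_{\operatorname{ht}P=1}D_P\setminus D$ and set $I:=(D:_D x)$, a proper nonzero ideal with $I\not\subseteq P$ for every height-one prime $P$. Using Noetherianity of the spectrum, choose a minimal prime $P_0$ over $I$; it must have height $\geq 2$. Passing to $R:=D_{P_0}$ (again integrally closed, $b_{P_0}$-Noetherian, local with maximal ideal $\mathfrak m$, of dimension $\geq 2$), the conductor $(R:x)=ID_{P_0}$ becomes $\mathfrak m$-primary. Applying Lemma \ref{lemma} in $R$ yields $m$ with $\mathfrak m^{m}\subseteq (R:x)^{b}$; since $x\cdot(R:x)\subseteq R$ and $R$ is integrally closed, an integral-dependence (determinant) argument shows $x\cdot(R:x)^{b}\subseteq R$, whence $x\mathfrak m^{m}\subseteq R$. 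Choosing $k\geq 1$ minimal with $x\mathfrak m^{k}\subseteq R$ and $w\in\mathfrak m^{k-1}$ with $xw\notin R$, the element $y:=xw\notin R$ satisfies $\mathfrak m\subseteq (R:y)\subsetneq R$, so $(R:y)=\mathfrak m$ because $R$ is local.

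It then remains to contradict the existence of such a $y$, and the key device is that in an integrally closed $b$-Noetherian domain every radical ideal is a $b$-finite $b$-ideal, so one may write $\mathfrak m=F^{b}$ with $F$ finitely generated. If $y\mathfrak m\subseteq\mathfrak m$, then $yF\subseteq F^{b}$, and a product-of-conductors computation produces a finitely generated fractional ideal $H$ with $y(FH)\subseteq FH$; hence $y$ is integral over $R$, forcing $y\in R$, a contradiction. Otherwise $y\mathfrak m=R$, so $\mathfrak m=(s)$ is principal while $\dim R\geq 2$; then any height-one prime $Q\subsetneq\mathfrak m$ lies in $P:=\bigcap_n\mathfrak m^{n}\neq(0)$, one checks $sP=P$, and writing the radical $b$-ideal $P=G^{b}$ with $G$ finitely generated gives $s^{-1}G\subseteq P=G^{b}$; the same integrality trick yields $s^{-1}\in R$, i.e.\ $s$ a unit, again a contradiction. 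Disposing of both cases establishes the intersection formula and completes (i)$\Rightarrow$(v). The delicate points are the reduction to the local, $\mathfrak m$-primary situation and the principal-maximal-ideal case, where one must exploit $b$-finiteness of radical ideals in place of genuine finite generation.
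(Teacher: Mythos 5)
Your proof is correct, but it takes a genuinely different --- and considerably longer --- route than the paper's. You reduce everything to (i)$\Rightarrow$(v) and prove that implication by verifying the three Krull axioms directly: DVR localizations at height-one primes (Claim 1 of Theorem \ref{b-1-loc} together with Proposition \ref{wedge}(1)), finite character (from Noetherian spectrum, Proposition \ref{b-noeth}(1)), and the intersection formula $D=\bigcap\{D_P\mid P \mbox{ of height } 1\}$, the last established by a conductor/integral-dependence argument in the style of Fossum's Theorem 3.12. The paper explicitly sidesteps exactly this step: it reduces instead to (i)$\Rightarrow$(ii) and disposes of it in three lines, using that $\overline{D}=\bigcup\{(F:F)\mid F\in\f(D)\}$, that the complete integral closure is $\bigcup\{(I:I)\mid I\in\F(D)\}$, and that $b$-Noetherianity gives $(I:I)\subseteq(I^b:I^b)=(F^b:F^b)\subseteq \bigcup\{(F^b:F^b)\mid F\in \f(D)\}=\overline{D}=D$; it then quotes the classical fact that a completely integrally closed Mori domain is Krull, with Mori supplied by Proposition \ref{b-noeth}(3). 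Your approach buys self-containedness --- you never need the Mori plus completely-integrally-closed implies Krull theorem --- at the price of the delicate local analysis in your two cases; the paper's buys brevity. One small inaccuracy in your Case 2: $P=\bigcap_n\mathfrak{m}^n$ need not be a radical ideal, but this is harmless, since $b$-Noetherianity supplies a finitely generated $G\subseteq P$ with $G^b=P^b\supseteq P$ for \emph{any} nonzero ideal, which is all your integrality trick requires.
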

%%%%%%%%%%%%%
\begin{proof} It is well known that (v)$\Rightarrow$(iv)$\Rightarrow$(iii)$\Rightarrow$(i) and (ii)$\Rightarrow$(i) (see, for example, \cite[Section 2]{FZ:2009} and \cite[Theorem 13.1 and page 418]{G}).

In order to show that all the statements are equivalent, it is enough to show that (i)$\Rightarrow$(ii), since a $b$-Noetherian domain integrally closed domain is Mori (Proposition \ref{b-noeth}(3)) and a Mori completely integrally closed domain is Krull {\cite[Exercise 15, page 559]{G}}.

Recall that $D$ is integrally closed (respectively, completely integrally closed) if and only if $D = \bigcup \{ (F:F) \mid F \in \f(D) \}$
(respectively,  $D = \bigcup \{ (I:I) \mid I \in \F(D) \}$) \cite[Theorem 34.3 and Proposition 34.7]{G}.

{ As mentioned above,} if $D$ is integrally closed, it is $b$-integrally closed and conversely. Therefore, $ \bigcup \{ (F:F) \mid F \in \f(D) \} = D = \bigcup \{ (F^b:F^b) \mid F \in \f(D) \}$.
On the other hand, because of the $b$-Noetherianity,  for every $I \in \F(D)$, there exists $F \in \f(D)$ such that $F^b = I^b$. Therefore,  $\bigcup \{ (I:I) \mid I \in \F(D) \} \subseteq
\bigcup \{ (I^b:I^b) \mid I \in \F(D) \}   \subseteq \bigcup \{ (F^b:F^b) \mid F \in \f(D) \} =D$.
\end{proof}

%%%%%%%% COROLLARY \label{conducive-b}  - 24
\begin{coor} \label{conducive-b}
A conducive domain is $b$-Noetherian if and only if its integral closure is a DVR. In particular, a conducive integrally closed $b$-Noetherian domain is a DVR.
\end{coor}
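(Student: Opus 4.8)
The plan is to reduce the whole statement to the integral closure $\overline{D}$ by means of Lemma~\ref{b-bbar}, and then to apply Proposition~\ref{prop:conducive} and Theorem~\ref{b-1-loc} to $\overline{D}$ itself. The first fact I would record is that conducivity passes to the integral closure. Indeed, since $D \subseteq \overline{D}$, any overring $T$ of $\overline{D}$ is also an overring of $D$, and from $D \subseteq \overline{D}$ we get $(D:T) \subseteq (\overline{D}:T)$; hence $(D:T) \neq (0)$ forces $(\overline{D}:T) \neq (0)$, so $\overline{D}$ is again conducive. Taking $T = \overline{D}$ in particular gives $(D:\overline{D}) \neq (0)$, which is exactly the hypothesis required to invoke Lemma~\ref{b-bbar}(2). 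Throughout I assume $D \neq K$ (a field is conducive and trivially $b$-Noetherian but is not a DVR, so this case must be excluded, consistently with Proposition~\ref{prop:conducive}).

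For the implication ``$\overline{D}$ a DVR $\Rightarrow$ $D$ is $b$-Noetherian'' I would argue as follows. A DVR is Noetherian, hence $\overline{b}$-Noetherian: since $d$ is the smallest semistar operation we have $d \leq \overline{b}$, and a $d$-Noetherian (i.e.\ Noetherian) domain is $\overline{b}$-Noetherian by the monotonicity of $\star$-Noetherianity recorded earlier. Lemma~\ref{b-bbar}(1) then yields that $D$ is $b$-Noetherian. Note that this direction does not actually use conducivity.

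For the converse, suppose $D$ is conducive and $b$-Noetherian. Because $(D:\overline{D}) \neq (0)$, Lemma~\ref{b-bbar}(2) shows that $\overline{D}$ is $\overline{b}$-Noetherian. Now $\overline{D}$ is integrally closed, conducive (by the first paragraph), and $\overline{b}$-Noetherian, so Proposition~\ref{prop:conducive} applied to $\overline{D}$ gives that $\overline{D}$ is local and one-dimensional. Then Theorem~\ref{b-1-loc}, applied to the integrally closed, one-dimensional, $\overline{b}$-Noetherian domain $\overline{D}$, shows that $\overline{D}$ is a Dedekind domain; being local, it is a DVR. Finally, the ``in particular'' clause is immediate: if $D$ is conducive, integrally closed and $b$-Noetherian, then $D = \overline{D}$, which by the equivalence just proved is a DVR.

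The only genuinely nontrivial verification is the inheritance of conducivity by $\overline{D}$ (together with the attendant remark that $\overline{D} \neq K$, so that Proposition~\ref{prop:conducive} is applicable); once that is in place, the argument is pure bookkeeping built on the already-established Lemma~\ref{b-bbar}, Proposition~\ref{prop:conducive} and Theorem~\ref{b-1-loc}.
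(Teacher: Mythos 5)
Your proof is correct and follows essentially the same route as the paper: reduce to $\overline{D}$ via Lemma~\ref{b-bbar} and then invoke Proposition~\ref{prop:conducive} and Theorem~\ref{b-1-loc}. The only (minor, and welcome) difference is that you explicitly check that conducivity passes to $\overline{D}$ and apply Proposition~\ref{prop:conducive} to $\overline{D}$ directly, whereas the paper applies it to $D$ and transfers ``local and one-dimensional'' to $\overline{D}$ --- a transfer whose locality part in fact rests on the very observation you make explicit.
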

%%%%%%%%
\begin{proof}
The ``if'' part follows directly from Lemma \ref{b-bbar}(1).
Conversely, let $D$ be conducive and $b$-Noetherian. Then, $\overline{D}$ is $b$-Noetherian by Lemma \ref{b-bbar}(2) and integrally closed. Moreover, $\overline{D}$ is one-dimensional and local, since $D$ is one-dimensional and local (Proposition \ref{prop:conducive}). Thus $\overline{D}$ is a local Dedekind domain (i.e., a DVR) by Theorem \ref{b-1-loc}.
\end{proof}
%%%%%%%%%

\medskip

Recall that a {\it DW-domain} is an integral domain in which $d= w$, that is a domain in which each maximal ideal is a $t$-ideal (see  \cite[Proposition 2.2]{M:2005}} and   \cite[Corollary 2.6]{PT:2008});  a {\it treed domain}  is an integral domain such that $\Spec(D)$  (as a partially ordered set under $\subseteq$) is a tree.
It is well known that} treed domains  (for example,  pseudo-valuation domains or Pr\"ufer domains) are DW-domains   \cite[Corollary 2.7]{DHLZ:1989} and \cite[p. 1957]{PT:2008}. Note that quasi-Pr\"ufer domains, i.e., domains with Pr\"ufer integral closure, are also DW-domains and it is not difficult to give examples of quasi-Pr\"ufer domains that are not treed   \cite[Example 2.28]{Pap:1976}.

  It is also clear that a  Krull DW-domain is a Dedekind domain, and conversely   \cite[Proposition 2.3]{M:2005}.
 As a consequence of Theorem \ref{krull}, the following  result  relates these  classes of domains in the $b$-Noetherian integrally closed case.

%%%% PROPOSITION \label{dw-ic}  -  25
\begin{prro} \label{dw-ic}
The following are equivalent.
\begin{itemize}
\item[(i)] $b$-Noetherian integrally closed treed domain.
\item[(ii)] $b$-Noetherian integrally closed DW-domain.
\item[(iii)] Dedekind domain.
\end{itemize}
\end{prro}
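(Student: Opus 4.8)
The plan is to prove the cyclic chain of implications, namely that a domain of type (i) is of type (ii), a domain of type (ii) is of type (iii), and a domain of type (iii) is of type (i); this yields the coincidence of the three classes. The two main ingredients are Theorem \ref{krull} and the two facts recalled immediately before the statement: every treed domain is a DW-domain, and a Krull DW-domain is exactly a Dedekind domain.

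First, (i) $\Rightarrow$ (ii) is immediate, since a treed domain is automatically a DW-domain, and the $b$-Noetherian and integrally closed hypotheses are simply kept. For (ii) $\Rightarrow$ (iii), I would let $D$ be a $b$-Noetherian integrally closed DW-domain. By Theorem \ref{krull}, the combination of $b$-Noetherianity and integral closure already forces $D$ to be a Krull domain; since $D$ is also a DW-domain, the recalled equivalence ``Krull DW-domain $=$ Dedekind domain'' gives that $D$ is Dedekind. This is the step that carries the weight of the result: it is precisely here that the full strength of Theorem \ref{krull} is invoked, and I expect no further obstacle once that theorem is in hand.

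Finally, for (iii) $\Rightarrow$ (i), let $D$ be a Dedekind domain. It is integrally closed and, being a Noetherian Pr\"ufer domain, it is $b$-Noetherian by Proposition \ref{PbMD}(2); moreover, as a Pr\"ufer domain it is treed. Hence $D$ is of type (i), which closes the cycle. The only delicate point is keeping track of the several bundled hypotheses around the loop --- they are trivially preserved in (i) $\Rightarrow$ (ii), consumed as inputs in (ii) $\Rightarrow$ (iii), and recovered from Dedekindness in (iii) $\Rightarrow$ (i) --- but no argument beyond Theorem \ref{krull} and the two recalled facts is required.
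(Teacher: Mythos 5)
Your proof is correct and follows exactly the route the paper intends: the paper states Proposition \ref{dw-ic} without a written proof, presenting it as ``a consequence of Theorem \ref{krull}'' together with the two facts recalled just before it (treed $\Rightarrow$ DW, and Krull $+$ DW $\Leftrightarrow$ Dedekind), which is precisely your cyclic argument. The closing step via Proposition \ref{PbMD}(2) and the fact that Pr\"ufer domains are treed is also the standard justification, so nothing is missing.
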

%%%%%%%%%%%%%%

\smallskip

Note that the previous result recovers in particular Remark \ref{remark:conducive}(b) and Proposition \ref{PbMD}(2).
The following result is a straightforward consequence of Lemma \ref{b-bbar}(2) and Proposition \ref{dw-ic}.

%%%%%%%%%% COROLLARY  - 26
\begin{coor}
Let $D$ be a $b$-Noetherian domain such that $(D: \overline{D}) \neq (0)$. If $\overline{D}$ is DW, then $\overline{D}$ is a Dedekind domain.
\end{coor}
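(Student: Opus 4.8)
The plan is to compose the two cited results, applied to the integral closure $\overline{D}$ viewed as a domain in its own right. First I would invoke Lemma \ref{b-bbar}(2): since $D$ is $b$-Noetherian and the conductor $(D:\overline{D})$ is nonzero, the integral closure $\overline{D}$ is $\overline{b}$-Noetherian, where $\overline{b} = b_{\overline{D}}$ is the $b$-operation of $\overline{D}$. The key point to record here is that $\overline{b}$ is genuinely the $b$-operation of $\overline{D}$ considered as an integral domain on its own, not merely a derived operation on $D$; this was already observed in the proof of Lemma \ref{b-bbar}, where it is noted that $b_\iota$ coincides with $\overline{b}$ and that $D$ and $\overline{D}$ share the same valuation overrings.

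Next I would note that $\overline{D}$ is integrally closed, since the integral closure of any domain is integrally closed (integral closure is idempotent). Combined with the hypothesis that $\overline{D}$ is a DW-domain, this shows that $\overline{D}$ satisfies all three conditions appearing in statement (ii) of Proposition \ref{dw-ic}, namely it is a $\overline{b}$-Noetherian, integrally closed, DW-domain.

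I would then apply the implication (ii)$\Rightarrow$(iii) of Proposition \ref{dw-ic} directly to $\overline{D}$ (with its own $b$-operation $\overline{b}$), concluding that $\overline{D}$ is a Dedekind domain. The argument is essentially immediate once the inputs are in place; the only point requiring care — and the one mild obstacle worth stating explicitly — is the bookkeeping that lets Proposition \ref{dw-ic}, which is phrased for a domain and its own $b$-operation, be applied to $\overline{D}$: this is justified precisely because $\overline{b}$ is the $b$-operation of $\overline{D}$ and because $\overline{D}$ is integrally closed, so that the \texttt{ab}-operation $\overline{b}$ is a (semi)star operation on $\overline{D}$ in the sense needed.
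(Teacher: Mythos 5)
Your proposal is correct and follows exactly the route the paper indicates: the paper states that this corollary is a straightforward consequence of Lemma \ref{b-bbar}(2) (giving that $\overline{D}$ is $\overline{b}$-Noetherian from the nonzero conductor) combined with Proposition \ref{dw-ic} applied to the integrally closed DW-domain $\overline{D}$. Your explicit bookkeeping about $\overline{b}$ being the intrinsic $b$-operation of $\overline{D}$ is a reasonable elaboration of what the paper leaves implicit.
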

%%%%%%%%%%%%%

\smallskip

Note that { the previous result recovers in particular Corollary \ref{quasi-dedekind}(2).  Examples of integral domains whose  integral closure is  a DW  (e.g.,  finite dimensional treed domains, domains with finite spectrum, etc.) are mentioned in \cite[Section 3]{PT:2008}.

%%%%%%%%%%%%%%%%
%%%% SECTION 3
%%%%%%%%%%%%%%%%

\section{Classes of domains defined by $\mbox{\rm {\texttt{eab}}}$ semistar operations}

As the next lemma shows in the star operation case, the equality of the star operations $\ast$ and $\ast_a$ for the classical operations $d, w$ and $t$ characterizes relevant classes of domains. So, it is natural to study more in detail equalities of the previous type, in the general setting of star and semistar operations.

%%%%% LEMMA \label{stara}  27
\begin{leem} \label{stara}
\begin{enumerate}
\item[(1)] $d = d_a$ is equivalent to Pr\"ufer domain;
\item[(2)] $w = w_a$ is equivalent to P$v$MD;
\item[(3)]  $t = t_a$ is equivalent to $v$-domain.
\end{enumerate}
\end{leem}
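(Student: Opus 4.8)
The plan is to prove each of the three equivalences by unwinding the definition of the canonically associated \texttt{eab} operation $\star_a$ and comparing it with the relevant known characterizations of Pr\"ufer domains, P$v$MD's, and $v$-domains. The key conceptual observation is that $\star_a = b(\star)$ by the fundamental identity $E^{\star_a} = E^{b(\star)}$ recalled in the introduction, so $\star = \star_a$ means precisely that $\star$ agrees with the intersection of the extensions $EV$ over all $\star$-valuation overrings $V$ of $D$. I would organize the proof so that the structural content is isolated in the $t$-case and the other two cases are then deduced or handled by parallel arguments.

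First I would prove (3), $t = t_a$ is equivalent to $D$ being a $v$-domain. Recall that $D$ is a $v$-domain exactly when every finitely generated ideal is $v$-invertible, equivalently when $v$ (hence $t = v_f$) is an \texttt{(e)ab} operation. Since $t$ is of finite type, by the remark in the introduction that a finite-type semistar operation $\star$ satisfies $\star = \star_a$ if and only if $\star$ is \texttt{(e)ab} (from \cite[Proposition 4.5(5)]{FL1}), the equality $t = t_a$ holds if and only if $t$ is \texttt{(e)ab}, which is exactly the $v$-domain condition. For (1), $d = d_a$ is immediate: by Lemma \ref{basic}(1) we have $d_a = b$, so $d = d_a$ is literally $d = b$, which by Lemma \ref{prufer} is equivalent to $D$ being Pr\"ufer.

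The case (2), $w = w_a$ equivalent to P$v$MD, is the one I expect to require the most care, and is the main obstacle. Here I would use that $w = \widetilde{v}$ is stable and of finite type, and that a P$v$MD is characterized by $D_M$ being a valuation domain for each $M \in \Max^t(D) = \QMax^w(D)$ (via \cite[Theorem 5]{Gr}). Again invoking the finite-type criterion, $w = w_a$ holds if and only if $w$ is \texttt{(e)ab}. The crux is to show that $w$ is \texttt{(e)ab} if and only if $D$ is a P$v$MD. For the forward direction I would use that $w$-invertibility of finitely generated ideals (the P$v$MD condition, since $t$- and $w$-invertibility coincide) forces the \texttt{eab} cancellation property $(FG)^w \subseteq (FH)^w \Rightarrow G^w \subseteq H^w$ by multiplying through by the $w$-inverse of $F$. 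For the converse I would pass to the localizations $D_M$ at quasi-$w$-maximal ideals, where stability of $w$ gives $E^w = \bigcap_M E D_M$, and deduce that the \texttt{eab} property of $w$ localizes to make each $D_M$ a Pr\"ufer (hence valuation, as $D_M$ is local) domain; combined with \cite[Theorem 5]{Gr} this yields the P$v$MD conclusion. The delicate point will be ensuring that the \texttt{eab} cancellation passes correctly between $D$ and its localizations and that finitely generated ideals are preserved, so I would lean on the stability and finite-type properties of $w$ throughout.
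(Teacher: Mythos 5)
Your treatment of parts (1) and (3) coincides with the paper's own proof: (1) is exactly $d_a=b$ (Lemma \ref{basic}(1)) combined with Lemma \ref{prufer}, and (3) is exactly the reduction, via the finite-type criterion $\star=\star_a\Leftrightarrow\star$ is \texttt{(e)ab} from \cite[Proposition 4.5(5)]{FL1}, of $t=t_a$ to ``$t$ is \texttt{(e)ab}'', which is the same as ``$v$ is \texttt{eab}'' because $t$ and $v$ agree on $\f(D)$; this is the classical characterization of $v$-domains. The divergence is in (2): the paper disposes of it in one line by citing \cite[Theorem 3.1]{FJS}, which states precisely that $D$ is a P$v$MD if and only if $w$ is an \texttt{eab} (semi)star operation, whereas you propose to reprove that equivalence. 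Your forward direction (P$v$MD $\Rightarrow$ cancellation, by multiplying through by the $w$-inverse of $F$) is fine.

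The converse in (2) is where your sketch has a genuine gap. You assert that ``the \texttt{eab} property of $w$ localizes to make each $D_M$ a valuation domain,'' but the cancellation hypothesis for $w$ reads $(FG)^w\subseteq (FH)^w$, and by stability this is a simultaneous condition at \emph{all} $M\in\QMax^{w}(D)=\Max^t(D)$; a containment of finitely generated ideals inside one localization $D_M$ does not produce such a global hypothesis, so the \texttt{eab} property of $w$ on $D$ does not formally descend to the identity operation on $D_M$. To close this you need an actual mechanism, for instance: apply cancellation of $(a,b)$ to the identity $(a,b)(a^2,b^2)=(a,b)^3$ to get $ab\in((a,b)^2)^w=(a^2,b^2)^w\subseteq (a^2,b^2)D_M$ for every $M\in\Max^t(D)$; note that $w$ \texttt{eab} gives $w=w_a$, hence $D=D^w=D^{w_a}\supseteq\overline{D}$, so each $D_M$ is an integrally closed local domain; and then invoke the classical fact that an integrally closed local domain in which $ab\in(a^2,b^2)$ for all $a,b$ is a valuation domain. (Alternatively, follow \cite{FJS} through $\Na(D,v)=\Kr(D,\widetilde{v})$ being Pr\"ufer.) As written, the crucial step of your argument for (2) is named but not proved.
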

%%%%%%%%%
\begin{proof}
(1) is clear from Corollary \ref{prufer}, since $d_a = b$ (Lemma \ref{basic}(1)).

(2) is a consequence of a general characterization of P$\star$MD's given by Fontana-Jara-Santos \cite[Theorem 3.1]{FJS}, from which we have that P$v$MD is equivalent to an integral domain such that $w$ is an \texttt{eab} (semi)star operation.

(3) It is  well known that $v$-domain is equivalent to saying that $v$ is an \texttt{eab} (semi)star operation \cite[p. 418]{G} and thus, also, $t = v_{\!{_f}}$ is an \texttt{(e)ab} (semi)star operation, i.e., $t = t_a$ { \cite[Proposition 4.5(5)]{FL1}}.  Conversely, if $t=t_a$, then it is easy to see that $v$ is \texttt{eab}, since
$(FG)^t =(FG)^v \subseteq (FH)^v = (FH)^t$ implies $G^v = G^t \subseteq H^t = H^v$, for $F,G, H \in \f(D)$.
\end{proof}
%%%%%%%%%%%%

\smallskip

%%%%%%% REMARK 28
\begin{reem}
\rm
In Lemma \ref{stara}, we have (implicitly) considered the equality of two operations {\sl as semistar operations}, that is, we have compared them on $\Fbar(D)$. However, this is not relevant in case of previous lemma,  since the operations considered there are all operations of finite type, so the statements (1), (2) and (3) are respectively equivalent to their analogous ``weaker'' versions (that is, the equality  holds  as {\sl star operations}):

(1$_{\F(D)}$) $d= d_a$ on $\F(D)$,

(2$_{\F(D)}$) $w= w_a$ on $\F(D)$,

(3$_{\F(D)}$) $t= t_a$ on $\F(D)$.

Indeed,   since a finitely generated $D$-submodule of $K$ is always a fractional ideal, the semistar operations of finite type are ``essentially'' defined on $\f(D)$ (since, $E^\star = \bigcup \{F^\star \mid F\in \f(D), F \subseteq E\}$, for each $E \in \Fbar(D)$), that is  if $\star_1$ and $\star_2$ are semistar operations of finite type, then the following are equivalent:

\begin{itemize}
\item[(i)]  $\star_1$ and $\star_2$ coincide on $\f(D)$;
\item[(ii)]  $\star_1$ and $\star_2$ coincide on $\F(D)$;
\item[(iii)]  $\star_1$ and $\star_2$ coincide (on $\Fbar(D))$.
\end{itemize}

As we will see later, when dealing with operations that are not of finite type, the equality as semistar operations is much stronger than the equality as star ope\-rations.
\end{reem}

\smallskip

The next step is the study of domains for which $v = v_a$.    First,  we consider the case when $v = v_a$, as star operations.

%%%%%%%% PROPOSITION \label{v=va} 29
\begin{prro} \label{v=va}
Given an integral domain $D$,
 $v = v_a$ on $\F(D)$ if and only if $D$ is  a    P$v$MD with $t$-finite character such that each (nonzero) $t$-prime is contained in only one $t$-maximal ideal and $t$-maximal ideals are $t$-finite (and, therefore, $t$-invertible).
\end{prro}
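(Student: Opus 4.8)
The plan is to characterize the equality $v = v_a$ on $\F(D)$ by first extracting the easy structural consequences and then showing they suffice. Since $v_a = (v_f)_a = t_a$ and $t = v_f$, the hypothesis $v = v_a$ forces in particular $t = t_a$ on $\f(D)$ (as $v$ and $t$ agree on finitely generated ideals), so by Lemma \ref{stara}(3) the domain $D$ is a $v$-domain. To upgrade this to a P$v$MD I would argue that the full strength of $v = v_a$ on all of $\F(D)$ (not merely on $\f(D)$) is what pins down the finiteness conditions. The key observation is that $v_a = t_a = \widetilde{(t_a)}$-type behaviour is stable and of finite type, whereas $v$ is generally neither; so demanding $v = v_a$ means $v$ itself must behave like a finite-type stable operation, and this is exactly the kind of rigidity that yields $t$-finite character and $t$-invertibility of $t$-maximal ideals.

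First I would establish the forward implication. Assuming $v = v_a$ on $\F(D)$: since $v_a$ is of finite type, every $I \in \F(D)$ satisfies $I^v = I^{v_a} = \bigcup\{F^{v_a} \mid F \subseteq I,\ F \in \f(D)\}$, giving $t = v$ on $\F(D)$, hence $t = v$ is itself of finite type and equals the \eab operation $t_a$. I would then use that an integral domain with $t = w$ (i.e.\ $v_a = t_a = w_a$-coincidence forced by stability of $v_a$) together with $t = t_a$ is precisely a P$v$MD by Lemma \ref{stara}(2). The $t$-finite character and the ``each $t$-prime lies in a unique $t$-maximal ideal'' conditions I would extract from the stability of $v_a$: stability means $v_a = \widetilde{v_a}$ is computed by localizing at the quasi-$(v_a)_f$-maximal ideals, and equality with the (generally non-stable) operation $v$ forces the localization family to be ``rigid,'' which translates into finite character and the unique-containment property. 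That $t$-maximal ideals are $t$-finite follows because $v = v_a$ applied to a $t$-maximal $M$ gives $M = M^v = M^{v_a} = F^{v_a}$ for some $F \in \f(D)$, $F \subseteq M$, i.e.\ $M$ is $t$-finite; combined with being $t$-maximal in a P$v$MD, standard theory yields $t$-invertibility.

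For the converse I would assume $D$ is a P$v$MD with $t$-finite character, unique $t$-maximal containment for $t$-primes, and $t$-finite (hence $t$-invertible) $t$-maximal ideals, and show $I^v = I^{v_a}$ for every $I \in \F(D)$. In a P$v$MD we have $t = w = \widetilde{t}$, so $t$ is stable of finite type and $t = t_a = v_a$ on $\f(D)$; the work is to lift this to $v = v_a$ on all of $\F(D)$, i.e.\ to show $v = t$ on $\F(D)$, which is exactly the statement that $D$ is $t$-Noetherian-like enough that $v$ is of finite type. Here I would invoke Proposition \ref{wedge-lambda-finite}: the $t$-finite character gives a finite-character family $\{D_M \mid M \in \Max^t(D)\}$ of valuation overrings, each $ID_M$ is $t$-finite (principal, since $D_M$ is valuation), so $I$ is $t$-finite, whence $I^t = I^v$ and thus $v = v_f = t = t_a = v_a$.

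The main obstacle I anticipate is the passage in the converse from local/finite-type data to the global equality $v = v_a$ on \emph{all} fractional ideals, as opposed to finitely generated ones: one must show every $I \in \F(D)$ is $t$-finite, and this is precisely where the interplay of $t$-finite character, the unique-containment hypothesis, and $t$-invertibility of $t$-maximal ideals must all be used simultaneously via Proposition \ref{wedge-lambda-finite}. Getting the finite-character family and verifying the $\star_\lambda$-finiteness hypothesis of that proposition for each $ID_M$, then checking that the resulting $\boldsymbol{\star}$-finiteness really delivers $I^v = I^t$, is the delicate part; the forward direction is comparatively routine once stability of $v_a$ is exploited.
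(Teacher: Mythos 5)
Your opening reduction is correct and is exactly the paper's: since $v_a=t_a$ is of finite type, $v=v_a$ on $\F(D)$ is equivalent to the conjunction of $v=t$ on $\F(D)$ (the TV-property) and $t=t_a$ (the $v$-domain property, Lemma \ref{stara}(3)). At that point the paper finishes in one line by quoting Houston--Zafrullah \cite[Theorem 3.1]{HZ:1998}, which is verbatim the statement that the TV $v$-domains are the P$v$MDs with $t$-finite character, unique $t$-maximal containment of $t$-primes, and $t$-invertible $t$-maximal ideals. You instead try to reprove that theorem, and both halves of the attempt have genuine gaps. In the forward direction everything hinges on your assertion that $v_a$ is stable; it is not: $\star_a$ is \texttt{eab} and of finite type but not stable in general (already $b=d_a$ satisfies $\widetilde{b}=d\neq b$ outside the Pr\"ufer case). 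With that premise gone, ``forces the localization family to be rigid, which translates into finite character and the unique-containment property'' is not an argument --- it is precisely the hard content of the Houston--Zafrullah theorem. Likewise $M=M^{v_a}=\bigcup\{F^{v_a}\mid F\subseteq M,\ F\in\f(D)\}$ only says that $M$ is a $t_a$-ideal; it does not produce a single $F$ with $F^{v_a}=M$, so $t$-finiteness of the $t$-maximal ideals does not follow this way.

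The converse is also broken. You propose to show that every $I\in\F(D)$ is $t$-finite and then deduce $I^v=I^t$; but that would make $D$ a Mori domain, which the hypotheses do not give. A rank-two valuation domain with principal maximal ideal satisfies every hypothesis (it is Pr\"ufer, hence a P$v$MD, with a single $t$-maximal ideal, which is principal) and does satisfy $v=v_a\ (=d)$ on $\F(D)$, yet its height-one prime is not finitely generated, hence not $t$-finite since $t=d$ there. Concretely, the hypothesis of Proposition \ref{wedge-lambda-finite} that you want to invoke fails: for an arbitrary $I\in\F(D)$ the extension $ID_M$ to the valuation domain $D_M$ need not be principal (only $MD_M$ is). So the converse cannot be run through $t$-finiteness of arbitrary ideals; one must show directly that every $t$-ideal is divisorial, and that again is the substance of \cite[Theorem 3.1]{HZ:1998}, which is the clean (and intended) way to close both directions.
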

%%%%%%%%%
\begin{proof}
Since $v_a$ is an operation of finite type, then clearly $v = v_a$ (on $\F(D)$) is equivalent to $v = t$ (on $\F(D)$) and $t = t_a$ (on $\F(D)$).
Since $ t = t_a$ on $\F(D)$ is equivalent to $v$-domain (Lemma \ref{stara}(3)), $v = v_a$ on $\F(D)$ is equivalent to $v$-domain which is also a TV-domain (i.e., a domain for which $t=v$  on $\F(D)$,  \cite{HZ:1998} and \cite{E:2009}). The $v$-domains that also are  TV-domains are exactly the P$v$MD's with $t$-finite character such that each (nonzero) $t$-prime is contained in only one $t$-maximal ideal and $t$-maximal ideals are $t$-finite ($t$-invertible) \cite[Theorem 3.1]{HZ:1998}.
\end{proof}
%%%%%%%%%%%

%%%%%%%%% REMARK 30
\begin{reem} \rm
 {\bf (a)} The domains with $t$-finite character such that each (nonzero) $t$-prime is contained in only one $t$-maximal ideal are called \emph{weakly Matlis} in \cite{AZ:1999}.  We can say that this is the ``$t$-version'' of Matlis' notion  of {\it \texttt{h}-local domain} (i.e., an integral domain such that each nonzero ideal is contained in at most finitely many maximal ideals and each nonzero prime is contained in a unique maximal ideal \cite{Matlis}).

{\bf (b)}  In particular, a domain in which $v = v_a$ (on $\F(D)$)   is a P$v$MD with the property that $PD_P$ is a principal ideal in the essential valuation overring $D_P$, for every (nonzero) $t$-prime ideal $P$ of $D$. However, a P$v$MD (or, even, a Pr\"ufer domain) with this property not necessarily has $v = v_a$,  even on $\F(D)$. For instance, take an almost Dedekind domain which is non Dedekind.   In this case, $v_a = t_a =w_a= d_a = b = d \lneq v$.
\end{reem}
%%%%%%%%%%%

\medskip

Several  characterizations of domains for which $v = v_a$, as star operations, are summarized in the following proposition. Recall that a {\it domain} is called {\it divisorial} if every nonzero ideal is divisorial (i.e., if $d=v$ as star operations). Heinzer characterized the integrally closed divisorial domains as the \texttt{h}-local
Pr\"ufer domains such that the maximal ideals are finitely generated \cite[Theorem 5.1]{H:1968}.

%%%%%% PROPOSITION \label{v=va} 31
\begin{prro}\label{v=va-2}
Let $D$ be an integral domain. The following are equivalent.
\begin{itemize}
\item[(i)]    $v = v_a$ on $\F(D)$.
\item[(ii)]   $D$ is a P$v$MD and $v=t$ on $\F(D)$.
\item[(iii)]   $D$ is a $v$-domain  and $v=t$ on $\F(D)$.
\item[(iv)]   $D$ is an essential domain and $v=t$ on $\F(D)$.
\item[(v)]     $D$ is integrally closed and $\Na(D,v)$ is a divisorial domain.
\item[(vi)]   $D$ is integrally closed and $v=w$ on $\F(D)$.
\item[(vii)]   $\Na(D,v) = \Kr(D,v)$ is \texttt{h}-local and the maximal ideals are finitely generated.
\item[(viii)]    $w = v_a$ (on $\F(D)$) and $v=t$ on $\F(D)$.
\item[(ix)]    $v = w_a$ on $\F(D)$.
\item[(x)]  $w=t=v =w_a=t_a=v_a$ on $\F(D)$.
\end{itemize}
\end{prro}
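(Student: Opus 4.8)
The plan is to prove the long chain of equivalences in Proposition~\ref{v=va-2} by combining three kinds of inputs: the characterization of $v$-domains/P$v$MD's as those where $t$ (resp.\ $w$) is \texttt{eab} (Lemma~\ref{stara}), the TV-domain analysis already carried out in Proposition~\ref{v=va}, and the Nagata--Kronecker dictionary $\Na(D,v)=\Kr(D,v)$ together with Heinzer's structure theorem for integrally closed divisorial domains. Rather than prove each of the ten statements against (i) separately, I would group them and establish a short cycle together with a few two-way links. The organizing observation is that $v_a=t_a=w_a$ on $\f(D)$ (all three are the \texttt{(e)ab} operation of finite type attached to $v$, $t$, $w$, and these three operations agree on finitely generated ideals), so many of the displayed equalities are just different packagings of the single condition ``$v=t=w$ and this common operation is \texttt{eab}.''

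First I would record the basic reductions. Since $v_a$ is of finite type, $v=v_a$ on $\F(D)$ splits as $v=t$ together with $t=t_a$; by Lemma~\ref{stara}(3) the latter means $D$ is a $v$-domain, giving (i)$\Leftrightarrow$(iii) directly (this is essentially Proposition~\ref{v=va}). For the P$v$MD reformulation (ii), I would use Lemma~\ref{stara}(2): $w=w_a$ characterizes P$v$MD's, and under $v=t$ on $\F(D)$ one has $w=\widetilde v\le t=v$, so the $v$-domain condition and the P$v$MD condition coincide; hence (ii)$\Leftrightarrow$(iii). The essential-domain version (iv) then follows from the standard sandwich P$v$MD $\Rightarrow$ essential $\Rightarrow$ $v$-domain, combined again with $v=t$, so (ii)$\Leftrightarrow$(iv). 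For (vi) I would note that $v=w$ on $\F(D)$ forces $w=v=t$ (since $w\le t\le v$), and $w$ \texttt{eab} is automatic once $w=v$ with $v$ a $v$-domain; integral closedness is needed to pass from a $v$-domain to a genuine (semi)star setting, giving (iii)$\Leftrightarrow$(vi).

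Next I would handle the Nagata/Kronecker statements (v) and (vii), which carry the real content. The key identity is that for an integrally closed domain, $D$ is a $v$-domain iff $v_a=v$ on $\f(D)$, and $\Na(D,v)=\Kr(D,v)$ precisely in that case (the Kronecker function ring is always Bézout, and it equals the Nagata ring exactly when the associated operation is \texttt{eab}). Thus (v), which asserts $\Na(D,v)$ is divisorial, should be translated through the fact that divisoriality of $\Na(D,v)$ transfers back to $v=v_a$ on $D$ via the correspondence between ideals of $D$ and ideals of its Nagata ring. For (vii) I would invoke Heinzer's theorem \cite[Theorem 5.1]{H:1968}: an integrally closed divisorial domain is exactly an \texttt{h}-local Prüfer domain with finitely generated maximal ideals; applying this to the Bézout (hence Prüfer) domain $\Kr(D,v)$ turns divisoriality of the Kronecker function ring into the \texttt{h}-local-plus-finitely-generated-maximals condition, yielding (v)$\Leftrightarrow$(vii). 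This transfer between $D$ and $\Na(D,v)=\Kr(D,v)$ is where I expect the main technical obstacle: one must check that divisoriality and the structural conditions genuinely pull back and push forward along the Nagata ring construction, using that $E^w=E\Na(D,v)\cap K$ and the content-ideal bookkeeping.

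Finally I would close the loop through the remaining ``mixed'' equalities (viii), (ix), (x). Here I would exploit that $w_a=v_a=t_a$ on finitely generated ideals, so $v=w_a$ on $\F(D)$ is the same as $v=v_a$, giving (i)$\Leftrightarrow$(ix) almost formally; similarly (viii) repackages $w=v_a$ with $v=t$, and since $w=v_a$ already forces $w=w_a$ (P$v$MD) and the hypothesis $v=t$ collapses everything, (viii) is equivalent to the rest. Statement (x) is then just the explicit assertion that all six operations collapse, which is the conjunction of ``$v=t=w$ on $\F(D)$'' (forced by the P$v$MD-plus-TV description from Proposition~\ref{v=va}) and ``this common operation is \texttt{eab}'' (i.e.\ equals its $(\cdot)_a$). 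I would present (x) as the strongest reformulation and deduce it last, noting that it trivially implies (i), (viii), (ix), so the cycle (i)$\Rightarrow\cdots\Rightarrow$(x)$\Rightarrow$(i) closes. The bulk of the genuinely new work is concentrated in the $D\leftrightarrow\Na(D,v)$ passage for (v) and (vii); everything else is bookkeeping with the finite-type identities $w\le t\le v$ and $w_a=t_a=v_a$ on $\f(D)$ together with the earlier lemmas.
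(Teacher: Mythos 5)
Your overall architecture (a cycle through (i)--(iv) using Lemma \ref{stara} and Proposition \ref{v=va}, Heinzer's theorem applied to the B\'ezout domain $\Kr(D,v)$ for (vii), and the collapse (x) handled last) is close to the paper's. But there are three genuine problems. First, your ``organizing observation'' that $v_a=t_a=w_a$ on $\f(D)$ for an arbitrary domain is false: $v$ and $t$ do agree on finitely generated ideals (whence $v_a=t_a$ always), but $w=\widetilde v$ does not agree with $t$ on $\f(D)$ in general, and correspondingly $w_a\lneq t_a$ can occur --- the paper itself notes that the DW condition $b=w_a$ is strictly weaker than $b=t_a$. The identity $w_a=t_a=v_a$ only becomes available \emph{after} one knows $D$ is a P$v$MD (so that $w=t$), so it cannot serve as an a priori organizing principle; in particular your ``almost formal'' derivation of (i)$\Leftrightarrow$(ix) secretly presupposes the P$v$MD property, and (ix)$\Rightarrow$(i) should instead be argued, as in the paper, by applying $(\cdot)_a$ to the hypothesis: $v_a=(w_a)_a=w_a=v$.

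Second, your treatment of (vi) is circular in the hard direction: to pass from ``$D$ integrally closed and $v=w$ on $\F(D)$'' back to the other conditions you say that ``$w$ \texttt{eab} is automatic once $w=v$ with $v$ a $v$-domain,'' but at that point you do not yet know that $D$ is a $v$-domain --- that is precisely what must be proved. The paper invokes the nontrivial theorem of El Baghdadi--Gabelli \cite[Theorem 3.3]{EG:2005} that an integrally closed domain with $w=v$ is a P$v$MD, and some such input is unavoidable here. Third, for (v) you correctly flag the transfer of divisoriality between $D$ and $\Na(D,v)$ as ``the main technical obstacle,'' but you leave it unresolved; the paper does not perform this transfer from scratch either --- it quotes \cite[Corollary 3.5 and Proposition 3.2]{GHP:2009} for (v)$\Leftrightarrow$(vi) and only then routes (v) through (ii) to reach (vii), using $\Na(D,v)=\Kr(D,v)$ for P$v$MD's \cite[Remark 3.1]{FJS} together with Heinzer's theorem. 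As written, your proposal does not close the loop at (v) and (vi).
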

%%%%%%%%%%%%%%%%
\begin{proof}
(i) $\Rightarrow$ (ii) If $v = v_a$, then $v = t$, since $v_a$ is of finite type. That $D$ is a P$v$MD has been proven in Proposition \ref{v=va}.

(ii)$\Rightarrow$(iv) A P$v$MD is obviously an essential domain.

(iv)$\Rightarrow$(iii) An essential domain is a $v$-domain \cite[Lemma 3.1]{Kang:1989}.

(iii)$\Rightarrow$(ii)  All finitely generated ideals are $v$-invertible, since $D$ is a $v$-domain \cite[Theorem 34.6]{G}. { By assumption}  $v = t$, so all finitely generated ideals are $t$-invertible and $D$ is a P$v$MD.

(ii)$\Rightarrow$(vi) In a P$v$MD, $t = w$ and so if $v = t$, we have $v = t = w$. Moreover, a P$v$MD is integrally closed.

(vi)$\Rightarrow$(ii) $D$ is a P$v$MD by \cite[Theorem 3.3]{EG:2005}. Moreover, $w = v$ implies $t = v$ (since $w \leq t \leq v$).

(vi)$\Leftrightarrow$(v)  \cite[Corollary 3.5 and Proposition 3.2]{GHP:2009}.

(v) $\Rightarrow$(vii) We have already shown that (v) implies (ii), so $D$ is a P$v$MD and  $\Na(D,v) = \Kr(D,v)$ by \cite[Remark 3.1]{FJS}.  Furthermore,  $\Kr(D,v)$ is always a Pr\"ufer domain (in fact, B\'ezout) \cite[Corollary 3.4(2)]{FL1}. Thus, $\Na(D,v)$ is a divisorial Pr\"ufer domain,  hence $\Na(D,v)$ is  \texttt{h}-local with the maximal ideals finitely generated by \cite[Theorem 5.1]{H:1968}.

(vii)$\Rightarrow$(v)  First,   $D$ is a P$v$MD, so integrally closed, by \cite[Remark 3.1]{FJS}. Moreover, $\Na(D,v) = \Kr(D,v)$ implies that $\Na(D,v)$ is Pr\"ufer.  Finally,    a Pr\"ufer \texttt{h}-local domain with the maximal ideals finitely generated is divisorial again by \cite[Theorem 5.1]{H:1968}.

(ii)$\Rightarrow$(x)  It is an easy consequence of the fact that,  in P$v$MD, $t = w  = w_a = t_a \ (=v_a)$.

(x)$\Rightarrow$(viii) This implication is trivial.

(viii) $\Rightarrow$ (ix) We have $v = t \leq t_ a = v_a = w \leq w_a \leq v_a = w \leq t$. Thus, $v = w_a$.

(ix) $\Rightarrow$ (i) $v_a = (w_a)_a = w_a = v$.
\end{proof}
%%%%%%%%%%%%%%%%%%%%

%%%%%%COROLLARY 32
\begin{coor} Let $D$ be an integral domain.
\begin{enumerate}
\item[(1)]  $v = v_a$ on $\F(D)$  and dim$(D)=1$ if and only if $D$ is a Dedekind domain.

\item[(2)]  $v = v_a$ on $\F(D)$  and dim$_t(D)=1$ if and only if $D$ is a Krull domain.
\end{enumerate}
\end{coor}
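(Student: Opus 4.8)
The plan is to reduce both statements to the characterization of Proposition~\ref{v=va-2}, namely that $v = v_a$ on $\F(D)$ is equivalent to $D$ being a P$v$MD with $t = v$ on $\F(D)$ (condition (ii)), and then to read off the local structure supplied by Proposition~\ref{v=va}. For the ``if'' directions I would argue directly. A Dedekind domain is Noetherian, hence Mori, so $t = v$ on $\F(D)$, and it is Pr\"ufer, hence a P$v$MD; by Proposition~\ref{v=va-2}(ii) this gives $v = v_a$, while $\dim(D)=1$ is part of the definition, settling (1). Likewise, a Krull domain is a Mori P$v$MD \cite[Theorem 3.2]{Kang:1989}, so again $t=v$ and Proposition~\ref{v=va-2}(ii) yields $v = v_a$; since the $t$-maximal ideals of a Krull domain are exactly its height-one primes, $\dim_t(D)=1$, settling the ``if'' direction of (2).

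For the ``only if'' directions I would exploit the local picture. Assume $v = v_a$ on $\F(D)$. By Proposition~\ref{v=va-2}(ii), $D$ is a P$v$MD with $t=v$, and by Proposition~\ref{v=va} it has $t$-finite character with every $t$-maximal ideal $t$-invertible. Fix $M \in \Max^t(D)$. Being a P$v$MD, $D_M$ is a valuation overring, and since $M$ is $t$-invertible, $MD_M$ is principal (a $t$-invertible ideal is locally principal at the $t$-maximal ideals). The crux of the argument is then to pin down $\dim(D_M)$: in case (1) the hypothesis $\dim(D)=1$ forces $\mathrm{ht}(M)=1$, whereas in case (2) one uses the correspondence, valid in a P$v$MD, between the primes of $D_M$ and the $t$-primes of $D$ contained in $M$, so that $\dim_t(D)=1$ gives $\dim(D_M)=1$. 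Either way $D_M$ is a one-dimensional valuation domain with principal maximal ideal, that is, a DVR. I expect this dimensional bookkeeping to be the main obstacle, since it is the only place where the two hypotheses $\dim(D)=1$ and $\dim_t(D)=1$ genuinely diverge.

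Having shown that $D_M$ is a DVR for every $M \in \Max^t(D)$, I would assemble the global conclusion. Since $D = \bigcap\{D_M \mid M \in \Max^t(D)\}$ \cite[Proposition 4]{Gr} and this intersection has finite character, $D$ is an intersection of DVR's of finite character, hence a Krull domain; this already proves (2). For (1), $D$ is then a one-dimensional Krull domain, and a one-dimensional Krull domain is precisely a Dedekind domain, which completes the proof.
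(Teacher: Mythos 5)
Your proof is correct. For part (2) it is essentially the paper's argument: Proposition \ref{v=va} gives a P$v$MD with $t$-finite character whose $t$-maximal ideals are $t$-invertible, each $D_M$ ($M \in \Max^t(D)$) is a one-dimensional valuation domain with principal maximal ideal, hence a DVR, and $D=\bigcap\{D_M \mid M\in \Max^t(D)\}$ is then a finite-character intersection of DVR's, hence Krull. Where you genuinely diverge is part (1): you deduce it from (2), noting that $\dim(D)=1$ forces $\mathrm{ht}(M)=1$ for every $t$-maximal $M$ and that a one-dimensional Krull domain is Dedekind. The paper instead proves (1) first and directly: since $\dim(D)=1$ every maximal ideal is a $t$-ideal, so $d=w$, and $w=w_a$ in a P$v$MD gives $b=d_a=w_a=w=d$, i.e., $D$ is Pr\"ufer (Lemma \ref{prufer}); the DVR localizations then make $D$ almost Dedekind, the chain $v=v_a=t=w=b=d$ makes it divisorial, and Heinzer's theorem \cite[Theorem 5.1]{H:1968} supplies finitely generated maximal ideals, whence Dedekind. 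Your route is more economical, avoiding the detour through almost Dedekind domains, divisoriality and Heinzer's theorem; what it buys this for is reliance on the characterization of Krull domains as finite-character intersections of DVR's and, more importantly, on the fact that in a P$v$MD the primes of $D_M$ contract to $t$-primes of $D$ contained in $M$ --- this is precisely what converts $\dim_t(D)=1$ into $\dim(D_M)=1$, so it deserves an explicit justification or reference (the paper leans on the same fact, silently, in its ``mutatis mutandis'' treatment of (2)).
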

\begin{proof}   (1) If $D$ is $1$-dimensional, the maximal ideals of $D$ are $t$-ideal, so in $D$ we have $w = d$.  By Proposition \ref{v=va}, we know that, when  $v = v_a$ on $\F(D)$, $D$ is a P$v$MD. In a P$v$MD, we know that  $w = w_a$ (Lemma \ref{stara}(2)), thus
a P$v$MD with $d = w$ is a Pr\"ufer domain, since $b = d_a =w_a = w =d$ {  (Lemma \ref{prufer})}. Moreover,  again from Proposition \ref{v=va},  we have  that $PD_P$ is finitely generated for every (nonzero) ($t-$)prime ideal $P$ of $D$. So $D_P$ is a DVR, for each $P$. Therefore, $D$ is an almost Dedekind domain. On the other hand, we have also that $v = v_a = t = w = b = d$, so $D$ is an almost Dedekind domain in which every nonzero ideal is divisorial, hence a Dedekind domain { since the maximal ideals of $D$ are  finitely generated by \cite[Theorem 5.1]{H:1968}}.
The converse is obvious.

(2)  {\sl Mutatis mutandis}, the proof of this statement follows the lines of the previous proof, using Proposition \ref{v=va} and  recalling that, in a P$v$MD, $D = \bigcap\{D_P \mid P \in \Max^t(D) \}$.  Conversely, in a Krull domain, we have  $ v = t =t_a$ \cite[Corollary 44.3 and Proposition 44.13]{G}.
\end{proof}
%%%%%%%%%%%%%%%%%%%

\smallskip

Note that a domain in which $\star = \star_a$ is not necessarily a P$\star$MD. For example, in any integral domain $b = b_a$ and,  on the other hand, a P$b$MD is a Pr\"ufer domain.   More generally, for a semistar operation $\star$ of finite type which is \texttt{(e)ab}, we have $\star= \star_a$, however a P$\star$MD is an integral domain for which  $\start= (\start)_a$ \cite[Theorem 3.1]{FJS}.  Therefore,  $\star= \star_a$ does not imply  $\start= (\start)_a$ and, conversely,   $\start= (\start)_a$ does not imply $\star= \star_a$, even on $\F(D)$ (for instance, take $\star = v$ in a P$v$MD which does not verify the other conditions listed in Proposition \ref{v=va}).
\medskip

%%%%%%% REMARK \label{v=va semi} 33
\begin{reem} \label{v=va semi}\rm Note that, in Proposition \ref{v=va}, we have considered $v = v_a$ as star operations. Suppose now $v = v_a$ as semistar operations, that is,  $E^v = E^{v_a}$ for all $E \in \Fbar(D)$.

In particular, $v = v_a$ as
star operations, so $D$ is a P$v$MD and $w=t=v$. { Assume that $D \neq K$}. Let $V$ be a
$v$-valuation overring of $D$ (since $v = w$ and $D$ is a P$v$MD, one can
take { as $V$ a localization of $D$} at a $t$-maximal ideal). If $(D:V) = (0)$,
$K = V^v = V^w = V$, a contradiction. So, $(D:V) \neq (0)$ and $D$ is
a conducive domain, by \cite[Theorem 3.2]{DF:1984}. So, since $D$ is a conducive integrally closed domain, there exists  a divided prime ideal $P$, such that $D_P$ is a valuation domain \cite[Corollary 4]{BDF:1986}. In particular, $P$ is a $t$-ideal, being the contraction to $D$ of the $t$-ideal $PD_P$ of $D_P$. So, $P$ is a prime $t$-ideal   and, since it is divided, it is comparable to all other prime ideals of $D$ (see, for instance, \cite[proof of Theorem 1]{Ak} and \cite[Proposition 1.2(ii)]{Gb},   or \cite[Proposition 2.1]{D-76}).

 Moreover, since $w=v$, $D$ is weakly Matlis \cite[Theorem 1.5]{EG:2005},
that is, each nonzero element is contained only in a finite number of
$t$-maximal ideals and each $t$-prime is contained in a unique
$t$-maximal ideal. In particular, $P$ is contained in only one
$t$-maximal ideal. But, since $P$ is comparable with all primes of $D$,
it follows that $D$ has exactly one $t$-maximal ideal, say $M$.
{ Since $D$ is a P$v$MD, $D$} is the intersection of the localizations of $D$ at its
maximal $t$-ideals, it follows that $D = D_M$ and so it is a valuation
domain. Furthermore, since  in a valuation
domain $t = d$, we have that $D$ is a divisorial domain and so, in particular, its maximal ideal is principal by \cite[Theorem 5.1]{H:1968}.

{ Conversely, the} fact that if $V$ is a divisorial valuation domain then { it is trivial that $v = v_a$} as
semistar operations.

So we have proven the following result:

\noindent \emph{Let $D$ be an integral domain. Then, $v =
v_a$ as semistar operations if and only if $D$ is a valuation domain
with principal maximal ideal.} \hfill $\Box$
\end{reem}
%%%%%%%%%%%%%%%%

\medskip

We have already observed  that the integral domains for which $d = b$ are exactly the Pr\"ufer domains (Lemma \ref{prufer}). The next goal is to understand  the domains for which $v = b$.  This is a stronger condition than $v=v_a$, since we require  not only that $v$ is \texttt{eab}   of finite type but, also, precisely that $v_a =b$.
  First, we consider the case when $v = b$ as star operations.

%%%%%%%% PROPOSITION \label{d=b} 34
\begin{prro} \label{d=b} Let $D$ be an integral domain. The semistar operations $b$ and $v$ coincide on $\F(D)$ if and only if $D$ is a \texttt{h}-local Pr\"ufer domain such that the maximal ideals are finitely generated or, equivalently, if and only if the (semi)star operations $d$ and $v$ coincide on $\F(D)$  and $D$ is integrally closed.
\end{prro}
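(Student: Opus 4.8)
The plan is to prove the three conditions equivalent by using the characterization through $d$ and $v$ as a hinge. Write (1) for ``$b=v$ on $\F(D)$'', (2) for ``$D$ is \texttt{h}-local Pr\"ufer with finitely generated maximal ideals'', and (3) for ``$d=v$ on $\F(D)$ and $D$ integrally closed''. I would first dispose of (2)$\Leftrightarrow$(3): condition (3) says exactly that $D$ is an integrally closed divisorial domain, and Heinzer's theorem \cite[Theorem 5.1]{H:1968} identifies these with the \texttt{h}-local Pr\"ufer domains whose maximal ideals are finitely generated. The implication (3)$\Rightarrow$(1) is then immediate: such a $D$ is Pr\"ufer, hence $d=b$ by Lemma \ref{prufer}, and combining with $d=v$ gives $b=v$. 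The whole weight of the proposition therefore falls on (1)$\Rightarrow$(3).

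For (1)$\Rightarrow$(3) I would start by evaluating both operations at $E=D$: since $D^v=D$ while $D^b=\Dbar$, the hypothesis forces $\Dbar=D$, so $D$ is integrally closed. With $D$ integrally closed, $b$ is a (semi)star operation and $b\le t\le v$; the hypothesis $b=v$ then collapses this to $b=t=v$. Next I would observe that $v=v_a$ holds, because $v_a=b_a=b=v$ (the operation $b=d_a$ is already \texttt{(e)ab} of finite type, so $b_a=b$, and $v_a,b_a$ depend only on the values of $v,b$ on $\f(D)$, where they agree). Proposition \ref{v=va} then yields that $D$ is a P$v$MD.

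The remaining, and crucial, point is to upgrade ``P$v$MD'' to ``Pr\"ufer''. Here I would exploit that in a P$v$MD the localizations at the $t$-maximal ideals are valuation overrings, so that for every $E\in\Fbar(D)$ one has $E^b=\bigcap\{EV\mid V\in\calbV(d)\}\subseteq\bigcap\{ED_P\mid P\in\Max^t(D)\}=E^w$, i.e.\ $b\le w$. Combined with $w\le t=b$ this gives $b=w$. Since $w=\widetilde{v}$ is stable of finite type, the equality $b=w$ makes $b$ stable, whence $\widetilde{b}=\widetilde{w}=w=b$; but $\widetilde{b}=d$ by Lemma \ref{basic}(5), so $b=d$ and $D$ is Pr\"ufer (Lemma \ref{prufer}). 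Finally $d=b=v$ gives $d=v$, and together with integral closure this is (3).

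The main obstacle is precisely this last step: the $b$-- and $w$--operations are not a priori comparable, so the argument must first extract enough structure (integral closure, then $v=v_a$, hence P$v$MD) to force the inclusion $b\le w$, after which the stability of $w$ and the identity $\widetilde{b}=d$ close the loop. I would also check that the passage between coincidence on $\F(D)$ and on $\Fbar(D)$ is harmless throughout, which is guaranteed because, once $D$ is known to be integrally closed, all the operations in play ($b$, $w$, $t$, and $v=t$) are of finite type.
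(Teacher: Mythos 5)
Your proof is correct, and its skeleton coincides with the paper's: both handle (2)$\Leftrightarrow$(3) via Heinzer's theorem \cite[Theorem 5.1]{H:1968}, get (3)$\Rightarrow$(1) from $d=b$ in Pr\"ufer domains, and for (1)$\Rightarrow$(3) first extract $v=v_a$ on $\F(D)$ and invoke Proposition \ref{v=va} to conclude that $D$ is a P$v$MD. Where you genuinely diverge is in the step that upgrades ``P$v$MD'' to ``Pr\"ufer''. The paper does this through invertibility: since $b=t=v$ on finitely generated ideals and the P$\star$MD property depends only on $\starf$, every finitely generated ideal is $b$-invertible, so $D$ is a P$b$MD, and P$b$MD equals Pr\"ufer by Proposition \ref{PbMD}(1) (because $b$-invertible means invertible, Lemma \ref{basic}(5)). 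You instead argue through stability: in a P$v$MD the localizations at the $t$-maximal ideals are valuation overrings, so $E^b\subseteq E^w$ for all $E\in\Fbar(D)$, and combined with $w\le t=b$ this gives $b=w$; then $d=\widetilde{b}=\widetilde{w}=w=b$ forces $D$ to be Pr\"ufer by Lemma \ref{prufer}. Your route is slightly longer but it makes explicit the inequality $b\le w$ in a P$v$MD (which the paper never states) and in effect anticipates the paper's later proposition that $b=w$ characterizes Pr\"ufer domains; the paper's route is shorter and stays entirely within the invertibility formalism. Your closing remark about passing between coincidence on $\F(D)$ and on $\Fbar(D)$ correctly isolates the only point requiring care, and your observation that $D^b=\Dbar$ while $D^v=D$ gives integral closedness directly is a clean touch the paper leaves implicit.
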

%%%%%%%%%%%%
\begin{proof} Note that if $b = v$ on $\F(D)$, then in particular $v = v_a$ on $\F(D)$. In this situation,  by Proposition \ref{v=va}, $D$ is a P$v$MD (with further properties). On the other hand, $b = v$ on $\F(D)$ also implies that $D$ is a P$b$MD, i.e., $D$ is Pr\"ufer domain. Furthermore, { since in a Pr\"ufer domain  $d = b$, $D$ is a} divisorial Pr\"ufer domain,  hence we conclude by \cite[Theorem 5.1]{H:1968},  where the second and the third part of the statement are shown to be equivalent.

Conversely it is clear that in a divisorial integrally closed (Pr\"ufer) domain $d =v$ on $\F(D)$ and also, at the same time,  $d =b$ on $\F(D)$, thus $b = v$ on $\F(D)$.
\end{proof}
%%%%%%%%%%%%%%%%%%%

%%%%%% REMARK 35
\begin{reem} \rm  {\bf (a)} Note that, in an integrally closed $b$-Noetherian domain, we have $v = t = w =w_a$ on $\F(D)$, since it is a Krull domain (Theorem \ref{krull}). However, for a general integrally closed $b$-Noetherian (non-Dedekind) domain  $b \lneq w_a$ (Proposition \ref{d=b}).

 {\bf (b)}  If we require $b = v$ as semistar operations (i.e., if we require that $b = v$ on $\Fbar(D)$), we can say something more. In fact, if $V$ is a valuation overring such that $(D:V) = (0)$, we have $V^v = K $ and $V^b = V$. So,  if $b = v$ as semistar operations, such a valuation overring of $D$ cannot exist, { unless $D=K$}. { Therefore, if $D \neq K$,} $D$ must be a conducive domain  \cite[Theorem 3.2]{DF:1984} and, also, by Proposition \ref{d=b}, a Pr\"ufer divisorial domain. Therefore, $D$ is a valuation domain \cite[Lemma 4.6]{P:2005}.  In this case, we also  have $d= v$ as semistar operations.  Moreover,  if $D$ is integrally closed and \ec $d= v$ then $b =v_a$ and, in particular, $b =v$ since  in this case \ec $b \leq t \leq v \leq v_a$.  Therefore, we can conclude, \ec using also Remark \ref{v=va semi},   that {\it for an integral domain $D$, the following are equivalent.
\begin{enumerate}
\item[(i)]  $b= v$ (as semistar operations).
\item[(ii)]  $D$ is a divisorial valuation domain { (i.e., $D$ is a valuation domain with principal maximal ideal).}
\item[(iii)]  $D$ is integrally closed and $d= v$ (as semistar operations).
\item[(iv)] $v = v_a$ (as semistar operations).
\end{enumerate} }
Note that the condition that $D$ is integrally closed is necessary in (iii). For example, in a pseudo-valuation non valuation domain $D$ such that the canonically associated valuation domain $V$ is two-generated  as a $D$-module, $d = v$ on $\F(D)$ (\cite[Corollary 1.8]{HH:1978} and \cite[Proposition 4.3]{HZ:1998}). Moreover, { in this case,} $D$ is conducive, since $(D:V) \neq (0)$, so $d = v$ on $\Fbar(D)$. Clearly, { in this example, $D$ is not integrally closed, since $D^b = V$ and, obviously, $D=D^v \neq D^b$.}
\end{reem}

\bigskip

We consider next the case $w = w_a =b$.

%%%%% PROPOSITION 36
\begin{prro} Let $D$ be an integral domain. The following are equivalent.
\begin{enumerate}
\item[(i)]  $b= w$.
\item[(ii)]  $D$ is a Pr\"ufer domain.
\item[(iii)]  $d = b$.
\end{enumerate}
\end{prro}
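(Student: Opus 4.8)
The plan is to establish the cycle of equivalences by first dispatching the pair (ii) $\Leftrightarrow$ (iii), which is immediate, and then linking (i) to these. The equivalence of (ii) and (iii) is exactly the content of Lemma \ref{prufer}: an integral domain is Pr\"ufer if and only if $d = b$. So the genuine work consists in proving (i) $\Leftrightarrow$ (iii).

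For (iii) $\Rightarrow$ (i), I would argue as follows. Assuming $d = b$, Lemma \ref{prufer} gives that $D$ is a Pr\"ufer domain and, moreover, that \textbf{SStar}$_{f}(D) = \{d\}$ (Lemma \ref{prufer}(iv)). Since $w = \widetilde{v}$ is a semistar operation of finite type, it lies in \textbf{SStar}$_{f}(D)$, and hence $w = d$. Combined with $d = b$, this yields $b = w$, which is (i). (Alternatively, one can invoke the fact that a Pr\"ufer domain is a DW-domain, so that $d = w$, and then conclude $b = d = w$.)

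For (i) $\Rightarrow$ (iii), the key device is to apply the stable-finite-type construction $\widetilde{(\cdots)}$ to the equality $b = w$. Since $\widetilde{\star}$ depends only on $\star$ (through $\QMax^{\starf}$), from $b = w$ we obtain $\widetilde{b} = \widetilde{w}$. Now $\widetilde{b} = d$ by Lemma \ref{basic}(5), while $\widetilde{w} = \widetilde{\widetilde{v}} = \widetilde{v} = w$ because $w = \widetilde{v}$ is already stable of finite type and the tilde operation is idempotent on such operations. Therefore $d = w$, and since we assumed $b = w$ we conclude $d = b$, i.e., (iii) holds; by Lemma \ref{prufer} this is the same as saying that $D$ is Pr\"ufer.

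The point requiring a little care is the identity $\widetilde{w} = w$, that is, that applying $\widetilde{(\cdots)}$ to a stable semistar operation of finite type returns the same operation. This is the only nonformal ingredient, and it follows from the fact recalled in the Background (via \cite{FH}) that $\widetilde{\star}$ is always stable and of finite type, together with the standard observation that the quasi-$\starf$-maximal ideals of $D$ are unchanged when $\star$ is replaced by $\widetilde{\star}$. Once this is in hand the remaining steps are purely formal, and no further hypotheses on $D$ are needed.
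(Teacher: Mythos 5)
Your proof is correct and follows essentially the same route as the paper: both arguments hinge on the facts that $\widetilde{b}=d$ and that $w=\widetilde{v}$ is stable of finite type (hence fixed by the tilde construction), so that $b=w$ forces $d=b$, while the converse uses that in a Pr\"ufer domain every finite-type (semi)star operation collapses to $d$. The only cosmetic difference is that you apply $\widetilde{(\cdots)}$ to both sides of $b=w$, whereas the paper observes directly that $b$ inherits stability from $w$ and so equals $\widetilde{b}=d$; the content is identical.
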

%%%%%%%%%%%%%%%%%
\begin{proof} The condition $b = w$ implies that $b$ is a stable semistar operation (of finite type) and so $d =\tilde{b} = b =d_a$.
Henceforth, $D$ is a Pr\"ufer domain. The converse is clear, since in a Pr\"ufer domain not only $d =b$, but also every nonzero finitely generated ideal is invertible (hence, divisorial) and so $d = t$ and thus, in particular, $d=w$
\end{proof}
%%%%%%%%%%%%%%

\medskip
It remains only to study the case when the $b$-operation coincide with the $t$-operation. Recall that a domain is called \emph{vacant} if it  is integrally closed and it has   only one   ``classical'' Kronecker function ring (as in Gilmer's book \cite[page 400]{G}) or, equivalently, if it admits exactly one  \texttt{eab}  star operation of finite type (i.e., the $b$-operation).   For example, any Pr\"ufer domain is a vacant domain.

%%%%% PROPOSITION \label{b=t} 37
\begin{prro}  (cf. \cite[Remark 2.9]{F:2010})\label{b=t}
Let $D$ be an integral domain. The following are equivalent:

\begin{enumerate}
\item[(i)]  $b= t$.
 \item[(ii)]    $b = t_a$.
\item[(iii)]  $D$ is a vacant $v$-domain.
\end{enumerate}
\end{prro}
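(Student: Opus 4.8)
The plan is to treat $b$ as the \emph{least} and $t$ (when it is available) as the \emph{greatest} \texttt{eab} (semi)star operation of finite type, and to read vacancy as the collapse of this interval to a single operation. First I would assemble the elementary tools. Since $t=v_f$ and $(\star_f)_a=\star_a$, one has $t_a=v_a$; moreover $b=d_a$ (Lemma~\ref{basic}(1)), and from $d\leq v$ together with the monotonicity of $(\cdot)_a$ one gets $b=d_a\leq v_a=t_a$. Monotonicity itself I would read off the valuation-overring description: if $\star_1\leq\star_2$ then $\calbV(\star_2)\subseteq\calbV(\star_1)$, whence $(\star_1)_a=\wedge_{\calbV(\star_1)}\leq\wedge_{\calbV(\star_2)}=(\star_2)_a$. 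I would also use that $t=t_f\leq t_a$, that a finite-type operation is \texttt{eab} if and only if it equals its own $a$-operation, and that by Lemma~\ref{stara}(3) the condition ``$D$ is a $v$-domain'' is exactly $t=t_a$. Finally, two structural facts: for any \texttt{eab} star operation $\star$ of finite type, $\star=\star_a\geq d_a=b$, so $b$ is the smallest such operation; and since every star operation satisfies $\star\leq v$, every finite-type star operation satisfies $\star=\star_f\leq v_f=t$, so on a $v$-domain $t$ is the largest \texttt{eab} star operation of finite type.

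With these in place I would prove (i)$\Leftrightarrow$(iii). If $b=t$, then $t$ inherits the \texttt{eab} property of finite type from $b$, hence $t=t_a$ and $D$ is a $v$-domain; moreover $D=D^{t}=D^{b}=\overline{D}$, so $D$ is integrally closed. On this $v$-domain the \texttt{eab} star operations of finite type are squeezed, $b\leq\star\leq t=b$, so $b$ is the only one and $D$ is vacant. Conversely, if $D$ is a vacant $v$-domain, then $t$ is an \texttt{eab} star operation of finite type, and vacancy forces it to coincide with the unique one, namely $b$; thus $t=b$.

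Next, (i)$\Rightarrow$(ii) is immediate: $b=t$ makes $t$ \texttt{eab} of finite type, so $t=t_a$ and $b=t=t_a$. The substantial implication is (ii)$\Rightarrow$(i). From $b=t_a$ and $t\leq t_a$ I already have $t\leq b$, so it suffices to prove $b\leq t$; and $b\leq t$ holds if and only if $D$ is integrally closed (the forward direction forces $\overline{D}=D^{b}\subseteq D^{t}=D$, and the converse is the inequality used in the proof of Proposition~\ref{b-noeth}(3)). Hence the whole direction reduces to the single claim that $b=t_a$ implies $D$ integrally closed, which I expect to be the main obstacle.

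For that claim my approach would be to rewrite (ii) as $d_a=v_a$, i.e. $b(d)=b(v)$, equivalently $\Kr(D,d)=\Kr(D,v)$. Using $d\leq\star\leq v$ for every star operation $\star$ and the monotonicity of $b(\cdot)$, this gives $b=b(d)\leq b(\star)\leq b(v)=b$, so that $\star=\star_a=b(\star)=b$ for every \texttt{eab} star operation of finite type; this already supplies the uniqueness demanded by vacancy. The delicate remaining point is to guarantee that $b$ really is a star operation, that is $D=\overline{D}$, so that such operations exist at all; this is precisely where I would lean on the argument behind \cite[Remark~2.9]{F:2010}. Once integral closure is secured, $b\leq t\leq t_a=b$ forces $b=t$, closing the cycle and delivering (iii) through the equivalence (i)$\Leftrightarrow$(iii) established above.
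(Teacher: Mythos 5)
Your handling of (i)$\Rightarrow$(ii), (i)$\Rightarrow$(iii) and (iii)$\Rightarrow$(i) is correct and is essentially the paper's argument: the paper runs the cycle (i)$\Rightarrow$(ii)$\Rightarrow$(iii)$\Rightarrow$(i), and its middle step is exactly your ``collapse of the interval'' squeeze $t_a\geq t\geq \ast=\ast_a\geq d_a=b$ combined with $b=t_a$. The genuine gap is where you predicted it: the implication out of (ii). You correctly reduce (ii)$\Rightarrow$(i) to the single claim that $b=t_a$ forces $D=\overline{D}$, and then you do not prove that claim --- you ``lean on the argument behind \cite[Remark 2.9]{F:2010}''. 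That is not a routine citation but the entire content of the implication: none of the tools you assembled can deliver it, since the inequality $b\leq t$ and the existence of even one \texttt{eab} \emph{star} operation of finite type on $D$ are each already \emph{equivalent} to $D$ being integrally closed, so any argument invoking them at this point is circular. (For what it is worth, the paper's own proof of (ii)$\Rightarrow$(iii) begins ``let $\ast$ be an \texttt{eab} star operation of finite type on $D$'' and squeezes it between $b$ and $t$; this silently assumes such an $\ast$ exists, i.e., again, that $D$ is integrally closed, so it does not close the gap either.)

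Worse, the deferred claim is false, which is why you could not close it. Take $D=\mathbb{R}+X\mathbb{C}[\![X]\!]$, the pseudo-valuation domain whose associated valuation domain $V=\mathbb{C}[\![X]\!]=D+iD$ is two-generated over $D$; this is precisely the kind of example the paper invokes in the remark following Proposition~\ref{d=b} to show that $d=v$ on $\F(D)$ can hold without integral closedness. Here $t=d$ on $\F(D)$, hence $F^{t_a}=\bigcup\{(FH:H)\mid H\in\f(D)\}=F^{b}$ for every $F\in\f(D)$, so $t_a=b$ as (finite-type) semistar operations; concretely, both send $E$ to $EV$, because every valuation overring of $D$ contains $\overline{D}=V$ and $V$ is a $v$-valuation overring ($FV$ is a divisorial fractional ideal of $D$, so $F^v\subseteq(FV)^v=FV$). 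Thus (ii) holds, yet $D^b=V\neq D=D^t$, so $b\neq t$ and $D$ is neither integrally closed, nor a $v$-domain, nor vacant. So (ii) does not imply (i) or (iii) as stated; the statement needs integral closedness (equivalently, the existence of at least one \texttt{eab} star operation of finite type) built into (ii). Your instinct that this was ``the main obstacle'' was exactly right; the honest conclusion is that it cannot be overcome without strengthening the hypothesis.
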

%%%%%%%%%%%%%%%%%%%%
\begin{proof}
(i)$\Rightarrow$(ii) If $b = t$, then in particular  $t$ is \texttt{eab}, and so $t = t_a$  \cite[Proposition 4.5(5)]{FL1}.
 Therefore,  $b = t_a$.

(ii)$\Rightarrow$(iii) Now, let $\ast$ be an \texttt{eab} star operation of
finite type on $D$. Clearly, $\ast = \ast_a \geq d_a = b$, but
also $\ast \leq t$, being $\ast$ a star operation of finite type   \cite[Theorem 34.1(4)]{G}.  Thus, $t_a \geq t \geq \ast = \ast_a \geq b$. Therefore, there is a unique star operation which is  \texttt{eab} and of finite type on $D$. So $D$ is vacant. Moreover,  we have also observed that  $t$ is \texttt{eab} and so $D$ is a $v$-domain   (Lemma \ref{stara}(3)).

 (iii)$\Rightarrow$(i)  Since $D$ is a $v$-domain, $t$ is \texttt{eab}. Since $D$ is vacant, the $b$-operation is the only star operation \texttt{eab} and of finite type. So, $b= t$.
\end{proof}
%%%%%%%%%%%%%%%%%%%%%%%

%%%%%% REMARK 38
\begin{reem} \rm
{\bf (a)} Recall that a $t$-integrally closed (= $v$-integrally closed) domain was also called a {\it pseudo-integrally closed} domain in \cite{AHZ}.  With this terminology, the condition (iii) in Proposition \ref{b=t} can be  equivalently stated  (by \cite[Theorem 2.4 and Remark 2.6]{FZ:2009})  in the following form:
\begin{enumerate}
\item[(iii$'$)] {\it $D$ is a vacant pseudo-integrally closed domain.}
\end{enumerate}

Note also that a vacant domain (which is integrally closed by definition) is not in general pseudo-integrally closed (see \cite[Example 12, page 409]{G} and \cite[Proposition 1.8]{AHZ}).

{\bf (b)}  If $D$ verifies   $b = t$,  then $D$ is a (vacant $v$-)domain for which $d \ (= \widetilde{b} = \widetilde{t}\ ) = w$, i.e., a   DW-domain.  More generally,    it can be shown that any vacant domain is a DW-domain \cite[Proposition 2.6]{F:2010}.   Note also that
%As a matter of fact, if $b =t_a$ then clearly $b = w_a$, since
%$b = d_a \leq w_a \leq t_a$. Therefore, $d = \widetilde{b} =
% \widetilde{\ w_a} \geq \widetilde{w} = w \geq d$ and so $d = w$.
%More precisely, we have:
$$ \mbox{\it DW-domain} \;\; \Leftrightarrow \;\;  b = w_a\,.$$
As a matter of fact, in a DW-domain, $b=d_a =w_a$; conversely,   if  $b = w_a$, then $d = \widetilde{b} = \widetilde{w_a}\geq \widetilde w = w \geq d$.
%Moreover, it is straightforward that $b =t_a$ implies that $b = %w_a$.
 Finally, note that the condition  $b = w_a$ is  strictly weaker than the condition
``$D$ is a vacant domain''   (and so, in particular, also of the condition $b =t_a$)   \cite[Example 6.8]{F:2010}.

{\bf (c)}  If, in Proposition \ref{b=t}(iii), we assume P$v$MD [respectively, Krull domain] instead of $v$-domain, we have:
 $$
 \begin{array}{rl}
 D \mbox{ \it is a vacant P$v$MD-domain} &\hskip -5pt\Leftrightarrow D \mbox{ \it is a Pr\"ufer domain}   \Leftrightarrow b =w \ (=w_a) \,,\\
 D \mbox{ \it is a vacant  Krull domain} &\hskip -5pt\Leftrightarrow D \mbox{ \it is a Dedekind domain}  \,.
 \end{array}
 $$
 As a matter of fact, in the first case, $b=t_a$  and $w =w_a$  easily imply that $b =w_a$
 %and $d =\widetilde{b} = \widetilde{w_a} = \widetilde{w}=w$
 and so $b = w_a = w$.  Conversely, from $b = w$, clearly $w =w_a$, and also $d = \widetilde{b} = \widetilde{w} = w$ and hence $d =b$.
 For the second case, it is sufficient to recall that   a Krull domain is a P$v$MD and that Krull Pr\"ufer domain is a Dedekind domain.
\end{reem}
%%%%%%%%%%%%%%%%%%%%%%

\end{document}